\newcommand\A{\mathcal A}
\newcommand\Aff{\mathfrak A}
\newcommand\AND{\quad\text{and}\quad}
\newcommand\bal{\boldsymbol{\alpha}}
\newcommand\bep{\boldsymbol{\epsilon}}
\newcommand\BB{\textit{\textbf{B}}}
\newcommand\Bor{\mathcal B}
\newcommand\C{\mathbb C}
\newcommand\cb{\mathbf{c}}
\newcommand\E{\mathbb E}
\newcommand\DD{\mathbb D}
\newcommand\ep{\varepsilon}
\newcommand\Exp{\textit{Exp}}
\newcommand\F{\mathcal F}
\newcommand\gb{\mathbf{g}}
\newcommand\Gb{\mathbf{G}}
\newcommand\gm{g}
\newcommand\HH{\mathbb H}
\newcommand\im{\mathfrak{i}\,}
\newcommand\Laa{\sigma}
\newcommand\Lap{\mathfrak{L}}
\newcommand\Ls{\Lambda}
\newcommand\lt{\mathfrak{l}}
\newcommand\mm{\mathsf m}
\newcommand\Mbf{\mathbf M}
\newcommand\N{\mathbb N}
\newcommand\nn{N}
\newcommand\Poiss{\Pi}
\newcommand\Prob{\mathbb P}
\newcommand\R{\mathbb R}
\newcommand\rad{\textsl{rad}}
\newcommand\ro{d}
\newcommand\rt{\mathfrak{r}}
\newcommand\sd{\mathfrak{s}}
\newcommand\T{\mathcal T}
\newcommand\ut{\mathfrak{u}}
\newcommand\vt{\mathfrak{v}}
\newcommand\wt{\widetilde}
\newcommand\ww{W}
\newcommand\uno{\mathbf 1}
\newtheorem{theorem}{Theorem}
\numberwithin{theorem}{section}
\newtheorem{pro}[theorem]{Proposition}
\newtheorem{lem}[theorem]{Lemma}
\newtheorem{cor}[theorem]{Corollary}
\theoremstyle{definition}
\newtheorem{dfn}[theorem]{Definition}
\newtheorem{rem}[theorem]{Remark}
\newtheorem{rems}[theorem]{Remarks}
\newtheorem{ques}[theorem]{Questions}
\begin{document}$\,$ \vspace{-1truecm}

\title{Notes on hyperbolic branching Brownian motion}

\author{\bf Wolfgang Woess}
\address{\parbox{.8\linewidth}{Institut f\"ur Diskrete Mathematik,\\ 
Technische Universit\"at Graz,\\
Steyrergasse 30, A-8010 Graz, Austria\\[-3pt]}}
\email{woess@tugraz.at}

\date{March 15, 2026} 

\subjclass[2020] {60J80;  
                  60J50, 
                  60J65,  
                  31A20 
                  }
\keywords{Hyperbolic disk, hyperbolic Laplacian, branching Brownian motion, maximal distance,
          empirical distributions, boundary convergence}
\begin{abstract}
Euclidean branching Brownian motion (BBM) has been intensively studied
during many decades by renowned researchers.
BBM on hyperbolic space has received less attention. A
profound study of Lalley and Sellke (1997) provided insight
on the recurrent, resp. transient regimes of BBM on the Poincar\'e
disk. In particular, they determined the Hausdorff dimension
of the limit set on the boundary circle in dependence on the
fission rate of the branching particles. In the present notes,
further features are exhibited.  The rates of the maximal
and minimal hyperbolic distances to the starting point are determined, as 
well as refined asymptotic estimates in the transient regime. 
The other main issues studied here concern the behaviour of the
empirical distributions of the branching population, as
time goes to infinity, and their convergence to an infinitely supported 
random  limit probability measure on the boundary.
\end{abstract}
\maketitle
       
\markboth{{\sf Wolfgang Woess}}
{{\sf Notes on hyperbolic branching Brownian motion}}

\baselineskip 15pt

\section{Introduction}

Euclidean branching Brownian motion on $\R$ has been an intensively studied topic.
A good reference for the earlier developments is the comprehensive monograph 
by {\sc Athreya and Ney}~\cite{AN}. In short, a particle performs Brownian motion for
an exponentially distributed time and then fissions in two new particles, each of which continues
its own Brownian motion independently of the other for exponential time, then in turn fissions in two,
and so on. Primary object of studies has been the evolution of the random population at time 
$t$, as $t \to \infty$. One of the many interesting features concerns the position of the rightmost
particle at time $t$.  {\sc McKean}~\cite{McK} showed that the distribution of
the rightmost particle is directly linked with the travelling wave equation studied in the famous work of 
{\sc Kolmogorov, Petrovski and Piskunov}~\cite{KPP}.

Similarly, there is the study of branching random walk (BRW) in discrete time, where particles 
evolve according to a Galton-Watson process: at the points of fission, the new particles perform 
independent steps according to a given random walk. Branching random walk on $\R$ appeared, 
for example, in the monograph by {\sc Harris}~\cite[\S III.16]{Ha}. 
Both models have evolved significantly, and both are examples of tree-indexed Markov processes
in the sense of {\sc Benjamini and Peres}~\cite{BP}, where the linear time 
is replaced by a -- possibly random -- tree (discrete or continuous).

Here, the focus is on branching Brownian motion (BBM) 
on the hyperbolic disk. That is, the motion of the particles follows the continuous time 
Markov process whose infinitesimal generator is $\frac12 \Lap$, where $\Lap$ is the Laplace-Beltrami
operator of the unit disk equipped with the Poincar\'e metric (or equivalently, 
the upper half plane model). It has not been as widely considered as several other branching
random processes. A very significant in-depth study is due to {\sc Lalley and Sellke}~\cite{LS}. They 
considered hyperbolic BBM where the random times between successive fissions are exponentially
distributed with parameter $\lambda > 0$. There is a phase transition: for $\lambda \le 1/8$
(transient phase), each compact subset of $\DD$ is no more visited by the branching population
from some random time onwards, while for $\lambda > 1/8$ (recurrent phase) each open subset 
of $\DD$ continues to be visited by the population. The main results of \cite{LS} concern the random
\emph{limit set} $\Ls$, that is, the set of accumulation points of the trace of the population on the 
boundary $\partial \DD$, the unit circle. It is the full circle when $\lambda > 1/8$, while it 
is a perfect set with Hausdorff dimension $(1-\sqrt{1-8\lambda}\,)/2$ when $\lambda \le 1/8$.

An analogous phenomenon was exhibited for branching random walk on regular trees, resp. free groups,
by {\sc Liggett}~\cite{Li} and {\sc Hueter and Lalley}~\cite{HL}. This was extended to BRW on 
free products of groups by {\sc Candellero, Gilch and M\"uller}~\cite{CGM}, and very  recently  to 
BRW on hyperbolic groups by  {\sc Sidoravicius, Wang and Xiang}~\cite{SWX} and 
subsequently to BRW on relatively hyperbolic groups by {\sc Dussaule, Wang and Yang}~\cite{DWY}.
For BRW on finitely generated groups, resp. transitive graphs, this study has been accompanied by the 
investigation of the \emph{trace,} that is, the subgraph spanned by all points visited by the BRW.
See {\sc Benjamini and M\"uller}~\cite{BM}, {\sc Candellero and Roberts}, \cite{CR},
{\sc Gilch and M\"uller}~\cite{GiM}, {\sc Hutchcroft}~\cite{Hu} as well as \cite{SWX} and \cite{DWY}.

Returning to hyperbolic BBM, in the present notes several results are added to the 
profound study \cite{LS}.
The initial parts are presented in a rather broad way.
This concerns, in particular, the construction of the underlying time tree (Yule tree)  in \S 
\ref{sec:yule} and the chosen notation, which may differ slightly from the previous mainstream 
and is used
throughout sections \ref{sec:max}--\ref{sec:properties}. 
The introductory part (\S \ref{sec:hyp}) on (non-branching) hyperbolic BM includes, among other, 
a tail estimate of its maximal distance from the starting point within the time-interval 
$[0\,,\,1]$ (Proposition \ref{pro:max}), with which the experts seemingly have not been familiar. 
The construction of hyperbolic BBM is explained in 
\S \ref{sec:hypBBM} and related with a branching random walk on the affine group, a fact which might
merit further attention in future studies.

The main substance starts with \S \ref{sec:max}. Like in \cite{LS}, we make often use of the 
fact that in the upper half plane model, minus the logarithm of the imaginary part of hyperbolic BM
is one-dimensional Euclidean BM with drift $1/2$.
For the \emph{minimal and maximal hyperbolic distances} from the origin of a particle of hyperbolic 
BBM at time $t$, 
we show in Theorem \ref{thm:minmax1} that its rate is
the same as for the resulting Euclidean comparison process. 
Furthermore, in Theorem \ref{thm:minmax2} refined asymptotics are obtained 
in the transient regime, based on  
work of {\sc Roberts}~\cite{Ro} for Euclidean BBM.  

In \S \ref{sec:empdis} and \S\ref{sec:properties} we consider the \emph{empirical distributions} of
hyperbolic BBM: these are the normalised 
occupation measures of the population at the times $t > 0$. 
Following a suggestion by V. A. Kaimanovich, the asymptotic behaviour of these random distributions
was recently studied for BRW on graphs in parallel work of {\sc Kaimanovich and Woess}~\cite{KW}
and {\sc Candellero and Hutchcroft}~\cite{CH}. In the Euclidean setting of BRW and BBM, they had 
been studied since the mid 1960ies, see e.g. {\sc Ney}~\cite{Ney},  {\sc Stam}~\cite{St}, 
{\sc Kaplan and Asmussen}~\cite{KA}, {\sc Uchiyama}~\cite{U}, {\sc Biggins}~\cite{Bi1}, 
while \cite{KW}
and \cite{CH} are more relevant in a ``non-amenable'' setting. The latter is also present here, since
the spectral gap is responsible for the presence of the transient regime besides the recurrent one.

Variants of CLT-type asymptotics of the empirical distributions of hyperbolic BBM are 
derived once more via the Euclidean comparison process. The average rate of escape, that 
is, the rate of the average distance from the origin 
of the particles at time $t$, is shown in Theorem \ref{thm:escape} to coincide with the 
one of (non-branching) hyperbolic BM. In Theorem \ref{thm:weakconv} it is proved that 
the empirical distributions converge weakly
(on the closed disk) to a random limit distribution on the boundary, that is, the unit circle.
The limit distribution is infinitely supported (Theorem \ref{thm:atoms}). 
Some open questions are posed.

\smallskip

\noindent 
{\bf Acknowledgements.} The author acknowledges discussions with Nicola Kistler and Anton Wakolbinger
as well as helpful email exchange with John Biggins, Maury Bramson, Vadim Kaimanovich, 
Steve Lalley,  Sebastian M\"uller, Enzo Orsingher, Yuichi Shiozawa and Ofer Zeitouni. Several 
highly fruitful suggestions by a referee are also acknowledged.
%

\section{The Yule tree}\label{sec:yule}

The introduction of the continuous random population tree described in this section goes back to
{\sc Yule}~\cite{Y}.

Consider the binary tree $\{\lt,\rt\}^*$ consisting of all binary sequences
(words) $\vt = \sd_1\cdots \sd_n$ with $n \ge 0$ and $\sd_k \in \{\lt,\rt\}$.\footnote{To 
avoid confusion with numbers, we avoid the binary symbols $0, 1$. The symbols $\lt$, $\rt$ 
and $\sd$ stand for ``left'', ``right'' and ``side'', respectively.} 
For $n=0$, this is the empty sequence $\bep$. 
If $\vt$ has the form $\vt=\ut\sd$ with $\sd \in \{\lt,\rt\}$, then the \emph{predecessor} of 
$\vt$ is $\vt'=\ut$, and $\vt$ is one of the two \emph{successors} of $\ut$. The 
edges of the tree are all $[\vt',\vt]$, where $\vt \ne \bep$. We augment this tree 
by an additional vertex $\bal$ which is only connected to $\bep$, so that
$\bep' = \bal$. 
Now consider a sequence of i.i.d. random variables $\ell_{\vt}\,$, $\vt \in \{\lt,\rt\}^*$
having exponential distribution with parameter $\lambda > 0$. 
For the moment, we can realise it for example on the  product space 
$$
\bigl(\Omega^{\text{Yule}}\,, \A^{\text{Yule}}\,,\Prob^{\text{Yule}}\bigr) 
= \bigotimes_{\vt \in \{\lt,\rt\}^*} 
\bigl( \R_+, \Bor_{\R_+}, \Exp_{\lambda}\bigr)_{\vt}\,,
$$
where each factor is a copy of the probability space consisting of the 
positive real half-line with the Borel sigma-algebra and the exponential 
distribution with parameter $\lambda$, so that $\ell_{\vt}$ is the projection
on the $v$-coordinate. Later on, it will be embedded into a bigger probability
space.

We then get a \emph{random tree} $\T$, a $1$-complex where each edge $[\vt',\vt]$ is 
an interval with the random length $\ell_{\vt}\,$, and we write the edge as 
\begin{equation}\label{eq:edge}
[\vt',\vt] = \{\tau = {{_s}\vt} : 0 \le s \le \ell_{\vt}\,\},
\end{equation}
where ${{_s}\vt}$ is the point in the interval
at distance $s$ from $\vt'$. Then the length (= distance from $\bal$) of a vertex 
$\vt$ is defined recursively by $|\bal|=0$ and $|\vt| = |\vt'| + \ell_{\vt}$, and the length of 
$\tau = {{_s}\vt} \in  [\vt^-,\vt]$ is $|\tau| = |\vt^-| + s$.

This is the Yule tree. It is interpreted as a \emph{genealogical tree}, where $\bal$
is the ``ancestor'' at time $0$, and thinking of it as a particle, after time 
$\ell_{\bep}$ it fissions in two particles. The timelines of the new particles
are the edges $[\bep,\lt]$ and $[\bep,\rt]$, respectively, and after times $\ell_{\lt}\,$,
resp. $\ell_{\rt}\,$, each of them fissions again in $2$ particles. Recursively, a particle
at the end of its timeline $[\vt',\vt]$ fissions in two, whose new timelines are the edges
$[\vt,\vt\lt]$ and $[\vt,\vt\rt]$, respectively.

The \emph{population} at time $t \ge 0$ is the set 
$\T(t) = \T_{\bep}(t)$ 
of all particles ($\equiv$ vertices or 
interior points on the edges) at distance $t$ from $\bal$. We write 
$$
\nn(t) = \nn_{\,\bep}(t) = |\T(t)|
$$
for their number. Note that by continuity of the exponential distribution, at any fixed time 
$t \ge 0$, with probability $1$ there is at most one vertex $v$ of $\T$ with $|v|=t$. 
That is, no two fissions take place simultaneously. 

For any $\ut \in \{\lt,\rt\}^*$ let $\T_{\ut}$ be the subtree of $\T$ in which $\ut$ has the 
role of $\bep$
in the above description and $\ut'$ the role of $\bal$. In other words, it is spanned  by
the vertex set 
$$
\{\ut'\} \cup \bigl\{ \ut\vt : \vt \in \{\lt,\rt\}^* \bigr\}. 
$$
We write $\T_{\ut}(t)$ for the associated part of the population, that is, the set of elements of 
$\T_{\ut}$ at distance $t$ from $\ut'$, and $\nn_{\ut}(t)$ for their number.

For any subset $U \subset \{\lt,\rt\}^*$ of vertices which is prefix-free (that is, no
element of $U$ is an initial part of another element in $U$),  the trees $\T_{\ut}\,,\; \ut \in U$
are i.i.d. In particular, all the generation sizes $\nn_{\ut}(t)$, $\ut \in \{\lt,\rt\}^*$,
have the same distribution 
on $\N$. The following is well-known; 
we provide a short proof.

\begin{lem}\label{lem:expected} For any $t \ge 0$, the population size at
time $t$ has geometric distribution:
$$
\Prob[\nn(t)=n] = e^{-\lambda t}\,(1- e^{-\lambda t})^{n-1}\,,\quad n \in \N.
$$
In particular, the expected population size is $\;\E\bigl(\nn(t)\bigr) = e^{\lambda t}$.
\end{lem}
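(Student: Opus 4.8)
The plan is to exploit the self-similar (branching) structure of $\T$ that was just recorded: after the first fission at the vertex $\bep$, the two daughter subtrees $\T_\lt$ and $\T_\rt$ are i.i.d.\ copies of the whole tree $\T$ and are independent of the first fission time $\ell_\bep \sim \Exp_\lambda$. First I would encode the law of $\nn(t)$ in its probability generating function $F(t,z) = \E\bigl(z^{\nn(t)}\bigr)$, defined for $|z| \le 1$, and translate the recursion into a functional equation for $F$.

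Conditioning on $\ell_\bep = s$: if $s > t$ the population is still the single ancestral particle, so $z^{\nn(t)} = z$; if $s \le t$ then $\nn(t) = \nn_\lt(t-s) + \nn_\rt(t-s)$ with the two summands i.i.d.\ copies of $\nn(t-s)$, contributing $F(t-s,z)^2$. Integrating against the density of $\ell_\bep$ gives
\begin{equation*}
F(t,z) = e^{-\lambda t} z + \int_0^t \lambda e^{-\lambda s}\, F(t-s,z)^2\, ds.
\end{equation*}
After the substitution $u = t-s$ and multiplication by $e^{\lambda t}$, differentiation in $t$ turns this into the Riccati equation $\partial_t F = \lambda F(F-1)$ with initial condition $F(0,z) = z$. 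Treating $z$ as a parameter and separating variables (partial fractions $1/\bigl(F(F-1)\bigr) = 1/(F-1) - 1/F$) yields the explicit solution
\begin{equation*}
F(t,z) = \frac{e^{-\lambda t} z}{1 - (1 - e^{-\lambda t})z},
\end{equation*}
which I recognise as the generating function of the geometric law with success probability $e^{-\lambda t}$. Reading off the coefficient of $z^n$ by a geometric series expansion gives the claimed formula for $\Prob[\nn(t) = n]$, and the expected value $\E\bigl(\nn(t)\bigr) = e^{\lambda t}$ follows either as the mean $1/e^{-\lambda t}$ of that geometric law or directly from $\partial_z F(t,z)\big|_{z=1}$.

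The main thing to be careful about is the rigorous justification of the recursion and of the analytic steps, rather than any deep difficulty. I would note that $\T_\lt$ and $\T_\rt$ use disjoint blocks of the i.i.d.\ labels $\ell_\vt$ and are therefore genuinely independent of each other and of $\ell_\bep$, so the conditioning is legitimate (the ambiguity at the single instant $t = \ell_\bep$ is irrelevant, since a.s.\ no fission occurs at a prescribed time). Convergence of the defining series and differentiation under the integral sign are harmless for $|z| \le 1$, where $|F| \le 1$. As an alternative route avoiding generating functions, one could set up the Kolmogorov forward equations $p_1' = -\lambda p_1$ and $p_n' = -n\lambda p_n + (n-1)\lambda p_{n-1}$ for $p_n(t) = \Prob[\nn(t)=n]$ --- using that the total fission rate in a population of size $k$ is $k\lambda$ --- and verify by induction on $n$ that the asserted geometric probabilities solve them with $p_n(0) = \delta_{n,1}$.
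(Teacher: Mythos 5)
Your proposal is correct and takes essentially the same route as the paper: condition on the first fission time, derive the integral equation for a transform of $\nn(t)$, convert it into a solvable equation by multiplying with $e^{\lambda t}$, and recognise the geometric law from the explicit solution. The only difference is that you work with the probability generating function where the paper uses the characteristic function (and you solve the resulting Riccati ODE by separation of variables rather than quoting the solution of the integral equation), which is immaterial.
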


\begin{proof}
We have 
$$
\nn_{\,\bep}(t) = \begin{cases}  1\,,&\text{if }\; \ell_{\bep} \ge t\\
                \nn_{\lt}(t-\ell_{\bep})+\nn_{\rt}(t-\ell_{\bep})\,,&\text{if }\;
                                 \ell_{\bep} < t.     
                  \end{cases}
$$
Therefore the characteristic function (in the variable $x$) is 
$$
\begin{aligned}
\varphi_{\nn(t)}(x) = \E\Bigl(\exp\bigl(\im x \,\nn_{\bep}(t)\bigr)\Bigr) 
&= e^{\im x}\,\Prob[\ell_{\bep} \ge t] 
+ \E\Bigl(\exp\bigl(\im x \,\nn_{\lt}(t-\ell_{\bep})+\im x \,\nn_{\rt}(t-\ell_{\bep})\bigr)\,
\uno_{[\ell_{\bep} < t]}\Bigr)
\\
&= e^{\im x -\lambda t} 
+ \int_0^t  \varphi_{\nn(t-s)}(x)^2\, \lambda e^{-\lambda s}\, ds\,.
\end{aligned}
$$
Just for the purpose of this proof, set 
$f(t) = e^{\lambda t}\, \varphi_{\nn(t)}(x)$.
Then the above equation transforms into the integral equation
$$
f(t) = e^{\im x} + \lambda \int_0^t f(s)^2 \,e^{-\lambda s} \, ds.
$$
Thus, $f(t) = 1/(e^{-\im x} + e^{-\lambda t} - 1)$, and 
$$
\varphi_{\nn(t)}(x) = \frac{e^{\im x} \,e^{-\lambda t}}{1 - e^{\im x}(1-e^{-\lambda t})},
$$
which we recognise as the characteristic function of the geometric distribution with
parameter (success probability) $p = e^{-\lambda t}$.
\end{proof}

The population $\T(t)$ at time $t$ can be viewed as a \emph{cross-section} of $\T$ at level $t$.
For the tree viewed as a spatial generalisation of time, it can be interpreted to have 
a role analogous to the one of a stopping time.
If we cut the tree $\T$ along that cross-section, we obtain the subtree $\T(\vdash\! t)$ 
of all elements $\tau$ with $|\tau| \le t$. We let 
$V\bigl(\T(\vdash\! t)\bigr) = \{\lt,\rt\}^* \cap \T(\vdash\! t)$ be the random set consisting of 
those vertices of the  original binary tree which are part of $\T(\vdash\! t)\,$. 
By Lemma \ref{lem:expected}, $V\bigl(\T(\vdash\! t)\bigr)$ is 
finite with probability $1$.

The sigma-algebra $\F_t^{\text{Yule}}$ comprising the information inherent to the 
Yule tree up to and including time $t$ 
is generated by all random variables $\ell_{\vt}$ 
which intervene in the construction of $\T(\vdash\! t)$. 
The following is well known.

\begin{pro}\label{pro:mart}
The process  $\;\bigl(\nn(t) \,e^{-\lambda t}\bigr)_{t \ge 0}$ is a martingale with respect to the
filtration $(\F_t^{\text{Yule}})_{t \ge 0}\,$, and there exists an almost surely positive 
random variable $\ww$ such that
$$
\lim_{t \to \infty} \nn(t) \,e^{-\lambda t} = \ww
\quad \text{almost surely and in } L^1.
$$
\end{pro}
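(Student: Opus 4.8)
The plan is to derive the martingale property from the branching structure, obtain almost sure convergence from nonnegativity, upgrade it to $L^1$ through an $L^2$ bound, and establish positivity of the limit by a self-similarity argument.

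First I would check the martingale identity. Fix $0 \le s \le t$. At the deterministic time $s$ the configuration of the $\nn(s)$ living particles is $\F_s^{\text{Yule}}$-measurable, and by the memorylessness of the exponential holding times each living edge has an independent $\Exp_{\lambda}$-distributed residual length; hence, conditionally on $\F_s^{\text{Yule}}$, the future evolution from each of the $\nn(s)$ particles is an independent copy of a Yule tree started afresh at time $0$. This is the same branching property that underlies the i.i.d. statement for prefix-free vertex sets. Consequently
$$
\E\bigl(\nn(t)\mid \F_s^{\text{Yule}}\bigr) = \nn(s)\,\E\bigl(\nn(t-s)\bigr) = \nn(s)\,e^{\lambda(t-s)},
$$
by Lemma \ref{lem:expected}, and multiplying by $e^{-\lambda t}$ gives $\E\bigl(\nn(t)e^{-\lambda t}\mid\F_s^{\text{Yule}}\bigr)=\nn(s)e^{-\lambda s}$.

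Since the martingale is nonnegative, the martingale convergence theorem yields an almost sure limit $\ww \ge 0$. To get $L^1$ convergence I would prove boundedness in $L^2$. From the geometric law of Lemma \ref{lem:expected} with $p = e^{-\lambda t}$ one computes $\E\bigl(\nn(t)^2\bigr) = (2-p)/p^2$, so that $\E\bigl((\nn(t)e^{-\lambda t})^2\bigr) = 2 - e^{-\lambda t} \le 2$ uniformly in $t$. Thus the martingale is $L^2$-bounded, hence uniformly integrable, and converges in $L^1$ (in fact $L^2$); in particular $\E(\ww) = \lim_{t\to\infty}\E\bigl(\nn(t)e^{-\lambda t}\bigr) = 1$ because $\nn(0)=1$.

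It remains to show $\ww > 0$ almost surely, which I regard as the one step with genuine content. Using the first fission, for $t > \ell_{\bep}$ we have $\nn(t) = \nn_{\lt}(t-\ell_{\bep}) + \nn_{\rt}(t-\ell_{\bep})$; rescaling by $e^{-\lambda t}$ and letting $t\to\infty$ (so that $t-\ell_{\bep}\to\infty$ and the two rescaled subpopulations converge almost surely to their own limits) gives the almost sure identity
$$
\ww = e^{-\lambda \ell_{\bep}}\bigl(\ww_{\lt} + \ww_{\rt}\bigr),
$$
where $\ww_{\lt},\ww_{\rt}$ are i.i.d. copies of $\ww$, independent of $\ell_{\bep}$. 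As the factor $e^{-\lambda \ell_{\bep}}$ is strictly positive, the event $\{\ww=0\}$ equals $\{\ww_{\lt}=0\}\cap\{\ww_{\rt}=0\}$, so $q:=\Prob(\ww=0)$ satisfies $q=q^2$ and therefore $q\in\{0,1\}$. Since $\E(\ww)=1>0$ rules out $q=1$, we conclude $q=0$, i.e.\ $\ww>0$ almost surely. The main obstacle is the clean formalisation of the branching property used twice here --- at a deterministic time for the martingale identity and at the random first-fission time for the self-similarity --- together with the justification that the almost sure limit genuinely decomposes across the two subtrees; once this is in place the dichotomy $q=q^2$ closes the argument.
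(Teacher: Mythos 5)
The paper offers no proof of this proposition at all --- it is introduced with ``The following is well known'' and left as a citation-level fact --- so there is nothing internal to compare your argument against. Your proof is correct and is the standard one for branching-process martingales (as in Athreya--Ney): the branching property at a deterministic time (via memorylessness of the exponential holding times) gives the martingale identity; nonnegativity gives almost sure convergence; the explicit second moment of the geometric law from Lemma \ref{lem:expected} gives the uniform $L^2$ bound $\E\bigl((\nn(t)e^{-\lambda t})^2\bigr)=2-e^{-\lambda t}\le 2$, hence uniform integrability and $L^1$ convergence with $\E(\ww)=1$; and the decomposition at the first fission time, $\ww = e^{-\lambda\ell_{\bep}}(\ww_{\lt}+\ww_{\rt})$ with $\ww_{\lt},\ww_{\rt}$ i.i.d.\ copies independent of $\ell_{\bep}$, forces $q=\Prob[\ww=0]$ to satisfy $q=q^2$, and $q=1$ is excluded by $\E(\ww)=1$. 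The two points that genuinely need care --- passing the almost sure limit through the random time shift $t-\ell_{\bep}\to\infty$ in each subtree, and the independence of the subtree limits from $\ell_{\bep}$ --- are both handled correctly, the latter because $\T_{\lt}$ and $\T_{\rt}$ are built from edge variables disjoint from $\ell_{\bep}$, exactly the prefix-free i.i.d.\ structure the paper sets up in \S\ref{sec:yule}.
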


\section{Hyperbolic disk and Brownian motion}\label{sec:hyp}

A standard model of two-dimensional hyperbolic space is the 
\emph{Poincar\'e disk},that is, the unit disk $\DD \subset \C$  
with the hyperbolic length element and resulting metric
\begin{equation}\label{eq:hypmetric}
ds =  \frac{2\sqrt{dx^2 + dy^2}}{1-|z|^2} \AND
\ro(z,w) = \log\frac{|1-z\bar w| + |z-w|}{|1-z\bar w| - |z-w|}.
\end{equation}
Its orientation preserving isometry group consists of all M\"obius transformations
of the form 
\begin{equation}\label{eq:moebius}
\gm z = \frac{az+ c}{\bar c z + \bar a}\,, \quad a, c \in \C, \; |a|^2-|c|^2 = 1\,.
\end{equation}
Together with the reflection $z \mapsto - \bar z$, it generates the full isometry group.
Of course, these transformations also act on the boundary of $\DD$, the unit circle $\partial \DD$.
We shall use the group $\Aff$ of all transformations as in \eqref{eq:moebius} which fix the
boundary point $1$, that is, $a+c \in \R$. It acts simply transitively on $\DD\,$. In particular,
For each $z_0 \in \DD$, there is a unique element $\gm_{z_0} \in \Aff$ which maps $0$ to $z_0$.
It is given by 
\begin{equation}\label{eq:gammaz0}
\gm_{z_0}z = \frac{(1-z_0)z + (z_0 - |z_0|^2)}{(\bar z_0 - |z_0|^2)z + (1-\bar z_0)}\,.
\end{equation}
The hyperbolic Laplace (or Laplace-Beltrami) operator in the variable $z = x + \im y$ is 
\begin{equation}\label{eq:hypLap}
\Lap = \frac{(1-|z|^2)^2}{4}\, \Bigl( \partial_x^2 + \partial_y^2\Bigr)\,. 
\end{equation}
It is self-adjoint on $L^2(\DD, \mm)$, where $\mm=\mm_{\DD}$ is
the hyperbolic measure,
\begin{equation}\label{eq:hypmeas}
d\mm(z) = \frac{4dz}{(1-|z|^2)^2}\,. 
\end{equation}

The infinitesimal generator of \emph{hyperbolic Brownian motion} $(B_t)_{t \ge 0}$ is $\frac{1}{2}\Lap$.
(We remark that the factor $\frac12$ is used because in the analogous Euclidean setting we want 
that Brownian motion at time $t=1$ has standard normal distribution.)

One can see the latter
as a version of the standard two-dimensional Euclidean Brownian motion
slowed down as it gets close to the unit circle.
Responsible for slowing down is the factor $\frac{(1-|z|^2)^2}{4}$ which only depends on
the Euclidean distance of the current position $z$ from the circle. 

Since the hyperbolic Laplacian commutes with all hyperbolic isometries, 
hyperbolic Brownian motion (BM) is invariant under the latter: if $\gm$ is as in \eqref{eq:moebius}
and $(B_t)$ is (a version of) hyperbolic BM starting at $z_0 \in \DD$ then $(\gm B_t)$ is 
(a version of) hyperbolic BM starting at $\gm z_0\,$.
Equivalently, the heat kernel  $p_t(\cdot,\cdot)$
with respect to $\mm$ associated with $\frac12 \Lap$ is invariant under the diagonal
actions of every hyperbolic isometry. We write $P_t$ for the associated transition 
operator of hyperbolic BM: for any measurable set $K \subset \DD$
and  $z \in \DD$, resp., measurable function $f: \DD \to \R$,
\begin{equation}\label{eq:Pt}
P_t(z,K) = \int_{K} p_t(z,w)\, d\mm(w) \AND
P_tf(z) = \int_{\DD} p_t(z,w)f(w)\, d\mm(w)\,,
\end{equation}
whenever that integral is well defined.

Now let $(\Omega_b\,, \A_b\,, \Prob_b)$ be a suitable probability space on which 
hyperbolic BM starting at the origin is defined. With starting point $0$, as $t \to \infty$, 
the process converges almost surely to a $\partial \DD$-valued random variable $B_{\infty}\,$.
The distribution of $B_{\infty}\,$, being rotation invariant, is equidistribution on the unit circle:
$d\xi = \frac{1}{2\pi} e^{\im\phi} \,d\phi$, where $d\phi$ is the Lebesgue measure
on $[-\pi\,,\,\pi]$. (The elements of $\partial \DD$ are denoted $\xi, \eta$, etc.)

When the starting point is $z_0$ then, working on the same probability space, (a model of) 
the limit random variable is $\gm_{z_0} B_{\infty}\,$. The density of its distribution $\nu_{z_0}$ 
with respect to $d\xi$ is the \emph{Poisson kernel}
\begin{equation}\label{eq:poisson}
\Poiss(z_0,\xi) = \frac{1 - |z_0|^2}{|\xi - z_0|^2} \qquad (z_0 \in \DD\,,\; \xi 
 = e^{\im \phi} \in \partial \DD).
\end{equation} 

Since $\ro( B_t, o) \to \infty$ almost surely, we are also interested in
the speed. It is linear, and to understand it, it may be useful to pass to
two other models of hyperbolic space.

The second one, besides the disk, is the \emph{upper half plane model} 
$\HH = \{ u + \im v : u,v \in \R, v > 0\}$.
The metric $\ro = \ro_{\DD}$ of \eqref{eq:hypmetric} is transported to the
metric $\ro_{\HH}$ via the M\"obius map from $\DD$ to $\HH$
\begin{equation}\label{eq:DtoH}
z \mapsto \im \frac{1+z}{1-z}.
\end{equation}
It maps $0$ to $\im$, and the boundary points $-1$ to $0$ and $1$ to $\im\infty$,
and
\begin{equation}\label{eq:hmetric}
\ro_{\HH}(z,w) = \log\frac{|z-\bar w| + |z-w|}{|z-\bar w| - |z-w|}.
\end{equation}
We shall often switch back and forth between $\DD$ and $\HH$, and will mostly use  
unified notation $o$ for our origin, that is, $o = 0$ in $\DD$ and $o=\im$ in $\HH$.
We also remark here that in the upper half plane model, the group $\Aff$ of all transformations
that fix the boundary point $1$ in $\DD$ (that is, $a+c \in \R$ for $a, c$ as in \eqref{eq:moebius}) 
becomes, via conjugation with the map \eqref{eq:DtoH},  the \emph{affine group} of all transformations
$$
z \mapsto az + b\,, \quad a> 0\,,\; b \in \R \quad (z \in \HH).
$$
(The $a$ here is not the same as in \eqref{eq:moebius}.)

In the coordinates $(u,v) \in \HH$, the 
hyperbolic Laplacian becomes $v^2 (\partial^2 u + \partial^2 v)$.
Then we make one more change of variables,
setting $w = \log v$ to obtain the \emph{logarithmic model,} where now $(u,w) \in \R^2$ and
the hyperbolic Laplacian becomes
$$  
\Lap = e^{2w} \partial^2_u + \partial^2_w - \partial_w\,.
$$ 
Its projection on the vertical coordinate $w$ is $\partial^2_w - \partial_w$, so that 
$\frac12(\partial^2_w - \partial_w)$ is the infinitesimal generator of 
one-dimensional Euclidean Brownian motion with drift $-1/2$. Tracing this back to the
upper half plane and disk models, writing $B_t^{\HH}$ and $B_t^{\DD}$ for hyperbolic
Brownian motion in the respective coordinates, we have 
\begin{equation}\label{eq:im}
- \log \bigl(\Im B_t^{\HH}\bigr) = \tfrac{1}{2}t + \beta_t\,,
\end{equation} 
where $(\beta_t)_{t \ge 0}$ is standard Euclidean BM. In particular,
when $B_0^{\HH}=\im$, resp. $B_0^{\DD}=0$,
\begin{equation}\label{eq:normal}
\log \bigl(\Im B_t^{\HH}\bigr)  = \log \frac{1-|B_t^{\DD}|^2}{|1-B_t^{\DD}|^2}
\sim N\bigl(-\tfrac12 t, t\bigr),
\end{equation}
where (as usual) $\Im$ denotes the imaginary part, $\sim$ means ``has distribution'' 
and $N(a,s^2)$ is the normal distribution with mean $a$ and variance $s^2$.
From this we get the following, where we can omit the superscript referring to the respective model.

\begin{lem}\label{lem:speed}  The following central limit theorem and 
rate of escape results hold, as $t \to \infty\,$:
$$
\frac{\ro(B_t,B_0)- \frac12 t}{\sqrt{t}} \to \mathcal{N}(0,1) \quad \text{in law, and}\;\quad 
\frac{\ro(B_t,B_0)}{t} \to \frac12 \quad \text{almost surely.}
$$
\end{lem}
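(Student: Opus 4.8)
The plan is to reduce both statements to the single drifted Brownian motion appearing in \eqref{eq:im}, by comparing the hyperbolic distance from $o=\im$ with the vertical coordinate in the upper half plane. Write $B_t^{\HH}=u_t+\im v_t$ and $w_t=\log v_t$, so that \eqref{eq:im} reads $-w_t=\tfrac12 t+\beta_t$ for a standard Euclidean BM $(\beta_t)$. The exact distance from $o=\im$, obtained from \eqref{eq:hmetric} (equivalently from the identity $\cosh\ro_{\HH}(z,\im)=1+|z-\im|^2/(2\,\Im z)$), is
$$
\cosh \ro(B_t,o) = \frac{u_t^2 + v_t^2 + 1}{2\,v_t} = \tfrac12 e^{w_t} + \tfrac{1+u_t^2}{2}\,e^{-w_t}.
$$
First I would record the elementary lower bound: since the first summand is nonnegative, $\cosh\ro(B_t,o)\ge\cosh w_t$, whence $\ro(B_t,o)\ge|w_t|=\tfrac12 t+\beta_t$ for every $t$ large enough that $w_t<0$.

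For the matching upper bound I need to keep the horizontal coordinate $u_t$ under control. Here I would invoke the boundary convergence recalled above: $B_t^{\DD}\to B_\infty\in\partial\DD$ almost surely, and since the limit law is absolutely continuous, $\Prob[B_\infty=1]=0$; as the map \eqref{eq:DtoH} sends $1$ to $\im\infty$, this forces $B_t^{\HH}$ to converge almost surely to a \emph{finite} real boundary point, i.e. $u_t\to u_\infty\in\R$ almost surely. (Alternatively one checks directly that the horizontal local martingale $u_t$ has almost surely finite total quadratic variation $\int_0^\infty e^{2w_s}\,ds<\infty$, because $w_s\to-\infty$ linearly, and hence converges by the Dambis--Dubins--Schwarz time change.) Consequently, for large $t$ (so $w_t<0$ and $e^{w_t}\le 1$),
$$
\cosh\ro(B_t,o) \le \tfrac12 e^{-w_t} + \tfrac{1+u_t^2}{2}\,e^{-w_t} = \tfrac{2+u_t^2}{2}\,e^{-w_t},
$$
and, using $\ro=\operatorname{arccosh}(\cosh\ro)\le\log\bigl(2\cosh\ro\bigr)$, this gives $\ro(B_t,o)\le|w_t|+\log(2+u_t^2)$. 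Together with the lower bound this produces the sandwich
$$
|w_t| \le \ro(B_t,o) \le |w_t| + R_t, \qquad R_t := \log(2+u_t^2),
$$
where $R_t\to\log(2+u_\infty^2)$ is almost surely bounded.

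It then remains to substitute $|w_t|=\tfrac12 t+\beta_t$ and divide. For the rate of escape, dividing by $t$ yields $\tfrac12+\beta_t/t\le\ro(B_t,o)/t\le\tfrac12+\beta_t/t+R_t/t$; since $\beta_t/t\to 0$ almost surely by the strong law for Brownian motion and $R_t$ is bounded, the limit $\tfrac12$ follows. For the central limit theorem I would write $\ro(B_t,o)-\tfrac12 t=\beta_t+s_t$ with $0\le s_t\le R_t$; dividing by $\sqrt t$ gives $\beta_t/\sqrt t+s_t/\sqrt t$, where $s_t/\sqrt t\to 0$ almost surely (so in probability) while $\beta_t/\sqrt t$ is exactly $\mathcal N(0,1)$-distributed for every $t$, so Slutsky's theorem delivers convergence in law to $\mathcal N(0,1)$.

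The one genuinely non-routine point is the upper bound, namely verifying that the horizontal displacement adds only a bounded amount to $\ro(B_t,o)$; everything there rests on the almost sure convergence of $u_t$ to a \emph{finite} boundary point, which is precisely what excludes the degenerate direction $1\in\partial\DD$ (the point sent to $\im\infty$). Once $u_t$ is controlled, both assertions are immediate consequences of the scaling and the strong law for the single Brownian motion $\beta_t$.
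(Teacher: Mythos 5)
Your proof is correct and follows essentially the same route as the paper's: both pass to the upper half plane, use the almost sure convergence of $B_t^{\HH}$ to a finite real boundary point (excluding $\im\infty$) to control the horizontal coordinate, and thereby show that $\ro(B_t,o)$ differs from $-\log \Im B_t^{\HH} = \tfrac12 t + \beta_t$ by an almost surely bounded quantity, after which both conclusions are immediate. The only cosmetic difference is that the paper uses the exact identity \eqref{eq:rhodiff} for $\ro_{\HH}(z,\im)+\log \Im z$ where you use a two-sided $\cosh$-based sandwich; the content is identical.
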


\begin{proof}
As stated above \eqref{eq:poisson}, in terms of the disk model, it is well known that $B_t^{\DD}$ 
converges almost surely to a limit random variable $B_{\infty}^{\DD}$  with continuous distribution.
Taking this to the upper half plane model, since $\Prob[B_{\infty}^{\HH} = \im \infty]=0$, 
we get $B_{\infty}^{\HH} \in \R$ and $\Im B_t^{\HH} \to 0$ almost surely.  
Now, if $z = x + \im y \in \HH$
then an easy computation with the hyperbolic metric in $\HH$ as in \eqref{eq:hmetric} shows that 
\begin{equation}\label{eq:rhodiff}
\ro_{\HH}(x + \im y, \im) + \log y = 
\log\frac12 \Bigl(1 + |z|^2 + \sqrt{(1 + |z|^2)^2-4y^2}\Bigr)
\end{equation}
We get that 
$$
\ro_{\HH}(B_t, \im) + \log (\Im B_t^{\HH}) \to \log\bigl(1+ (B_{\infty}^{\HH})^2\bigr) 
\quad \text{almost surely, as }\; t \to \infty, 
$$
and the limit is almost surely finite. Combining this with \eqref{eq:normal}
completes the proof.
\end{proof}

We shall need estimates of the tail behaviour of the random variables $\ro(B_t, o)$, $t >0$.
The heat kernel $p_t(o,z)$, $z \in \DD$ (resp., $\in \HH$) associated with $\frac12 \Lap$ 
only depends on $R= \ro(z,o)$, and it has a
uniform estimate in space and time, see 
{\sc Davies and Mandouvalos}~\cite[Thm. 3.1]{DM}
\footnote{Concerning the heat kernel, different normalisations of the Laplacian
are an ongoing source of small confusion. Analysts typically use the heat kernel for $\Lap$ as
given in \eqref{eq:hypLap}. Some of them also omit the factor $1/4$. In probability, we want that
Euclidean BM at time $t=1$ has variance $1$, so that we use the heat kernel for $\frac12 \Lap$. 
Although  not explicitly stated, \cite{DM} uses   
the first of these three options, so that here we had to replace
their $t$ by $t/2$.}.
We state here the upper bound in a way which is suited for our purpose: 
\begin{equation}\label{eq:heat1} 
\begin{aligned}
p_t(z,w) &\le 
\frac{\textsl{Const}}{\sqrt{1+R}} \,\,  \Psi\biggl(\frac{1+R}{t}\biggr)\,   
\exp\biggl(-\frac{(R+\tfrac{t}{2})^2}{2t}\biggr) 
\quad\text{for }\; R, t > 0\,, \quad \text{where}\\ 
R &= \ro(z,w) \AND \Psi(x) = \begin{cases} x^{3/2}\,,&0 \le x \le 1\,,\\
                                              x^{1/2}\,,&x \ge 1\,. 
                                \end{cases}
\end{aligned} 
\end{equation}

For the following proposition, apparently not present in the literature, 
the author acknowledges a suggestion by Yuichi Shiozawa.

\begin{pro}\label{pro:max} Let $\; \mathcal{M} = \max \{ \ro(B_t\,, B_0) : t \le 1 \}.$ Then there
is $\textsl{K} >0$ such that
for any $c > 0$ 
$$
\Prob[\mathcal{M} \ge c] \le 2 \, \Prob[\ro(B_1\,,B_0) \ge c] \le \textsl{K}\, 
\exp\biggl(-\frac{(c-\tfrac{1}{2})^2}{2}\biggr)\,.
$$ 
\end{pro}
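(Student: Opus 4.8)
The plan is to prove the two inequalities separately. Throughout I would invoke the isometry invariance of hyperbolic BM (recorded above \eqref{eq:poisson}): since $\ro(B_t,B_0)=\ro(gB_t,gB_0)$ for every isometry $g$, the law of $\ro(B_t,B_0)$ does not depend on the starting point, and I may assume $B_0=o$. Write $R_t=\ro(B_t,o)$ for the radial distance process.

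\emph{First inequality (reflection / L\'evy-type bound).} Let $\tau=\inf\{t\ge 0: R_t\ge c\}$. As $R$ has continuous paths and $R_0=0<c$, the event $\{\mathcal M\ge c\}$ equals $\{\tau\le 1\}$, with $R_\tau=c$ on that event. By the strong Markov property,
\[
\Prob[\ro(B_1,o)\ge c]\ \ge\ \E\bigl[\uno_{\{\tau\le 1\}}\,\Prob_{B_\tau}[R_{1-\tau}\ge c]\bigr],
\]
so it suffices to show that from any $w$ with $\ro(w,o)=c$ one has $\Prob_w[R_s\ge c]\ge \tfrac12$ for all $s\ge 0$. This is where the geometry helps. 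The radial process is the one-dimensional diffusion with generator $\tfrac12(\partial_R^2+\coth(R)\,\partial_R)$, i.e. $dR_s=d\beta_s+\tfrac12\coth(R_s)\,ds$ for a standard BM $\beta$. Since $o$ is polar (so $R_s>0$ for $s>0$) and $\coth>0$, the drift integral is nonnegative, whence $R_s\ge c+\beta_s$ pathwise and $\Prob_w[R_s\ge c]\ge\Prob[\beta_s\ge 0]=\tfrac12$. Substituting gives $\Prob[\ro(B_1,o)\ge c]\ge\tfrac12\Prob[\tau\le 1]=\tfrac12\Prob[\mathcal M\ge c]$.

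\emph{Second inequality (heat-kernel integration).} Using that $p_1(o,\cdot)$ is radial and writing the hyperbolic area element in geodesic polar coordinates (proportional to $\sinh R\,dR$),
\[
\Prob[\ro(B_1,o)\ge c]=\textsl{Const}\int_c^\infty p_1(o,R)\,\sinh R\,dR.
\]
I would insert the bound \eqref{eq:heat1} at $t=1$, so that for $R$ bounded below $\Psi(1+R)=(1+R)^{1/2}$ cancels the prefactor $1/\sqrt{1+R}$, and use $\sinh R\asymp\tfrac12 e^{R}$; the integrand is then dominated by $\textsl{Const}\cdot e^{R}\exp\!\bigl(-\tfrac{(R+1/2)^2}{2}\bigr)$. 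The decisive simplification is the completion of the square
\[
R-\frac{(R+\tfrac12)^2}{2}=-\frac{(R-\tfrac12)^2}{2},
\]
which exhibits a Gaussian centred at $1/2$; the usual Gaussian tail estimate yields $\int_c^\infty\exp\!\bigl(-\tfrac{(R-1/2)^2}{2}\bigr)\,dR\le\textsl{Const}\cdot\exp\!\bigl(-\tfrac{(c-1/2)^2}{2}\bigr)$ for large $c$. To obtain one constant $\textsl{K}$ valid for \emph{all} $c>0$ I would dispatch small $c$ trivially, bounding the probability by $1$ and using that $\exp(-\tfrac{(c-1/2)^2}{2})$ stays bounded away from $0$ on bounded $c$.

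I expect the main subtlety to lie in the first inequality rather than the second. The classical reflection principle produces the factor $2$ as an \emph{equality} only because driftless BM is symmetric about its starting level; here the exact symmetry is absent, and what rescues the bound is precisely that the radial drift $\tfrac12\coth(R)$ is strictly positive (the particle is pushed away from $o$). Making the step $\Prob_w[R_s\ge c]\ge\tfrac12$ watertight---identifying the radial SDE, noting polarity of $o$, and keeping the sign of the drift term under control---is the conceptual crux, even though each ingredient is individually standard.
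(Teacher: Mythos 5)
Your proof is correct, and its skeleton is the same as the paper's: the first inequality via the stopping time $\tau=T_c$, the strong Markov property, and a bound $\Prob_w[\ro(B_s,o)\ge c]\ge\tfrac12$ whenever $\ro(w,o)=c$; the second via the heat kernel bound \eqref{eq:heat1} at $t=1$, polar coordinates with density $\sinh R$, and the same completion of the square $R-\tfrac{(R+1/2)^2}{2}=-\tfrac{(R-1/2)^2}{2}$. The genuine difference is how the crucial $\ge\tfrac12$ estimate is obtained. The paper (see \eqref{eq:gec}) stays in the upper half plane: it restarts the particle at $\im e^{-c}$, uses $\ro_{\HH}(z,\im)\ge-\log\Im z$ from \eqref{eq:rhodiff}, and invokes \eqref{eq:normal}, i.e.\ that $-\log\Im B_t$ is Euclidean BM with drift $+\tfrac12$, so it exceeds level $c$ at any later time with probability at least $\tfrac12$. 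You instead argue intrinsically through the radial diffusion, identifying its generator $\tfrac12\bigl(\partial_R^2+\coth(R)\,\partial_R\bigr)$ and using positivity of the $\coth$ drift together with polarity of $o$ to get the pathwise domination $R_s\ge c+\beta_s$. Both are sound. The paper's route costs nothing beyond formulas it has already established and avoids stochastic calculus entirely; yours requires the radial SDE (It\^o's formula for the distance function and non-attainability of $o$) as an additional standard ingredient, but in exchange it is model-free: it works verbatim on any rotationally symmetric manifold whose radial drift is nonnegative, whereas the half-plane projection is specific to hyperbolic space. Two minor remarks: at $t=1$ the argument of $\Psi$ in \eqref{eq:heat1} is $1+R\ge1$ for \emph{every} $R\ge0$, so the cancellation of $(1+R)^{\pm1/2}$ needs no restriction to ``$R$ bounded below''; and your explicit treatment of small $c$ (bounding the probability by $1$) is exactly what makes $K$ uniform in $c>0$, a point the paper leaves implicit in its ``suitable constant''.
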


\begin{proof}
We work in the upper half plane model, and we may suppose without loss of generality 
that $B_0 = B_0^{\HH} = o \,(=\im)$. Here, we shall omit the sub- and superscripts $\HH$.
Recall from \eqref{eq:heat1} that in any model, $(B_t)_{t \ge 0}$ is isotropic, that is, its
transition kernel only depends on time and hyperbolic distance: 
$\bigl(\ro(B_t\,,o)\bigr)_{t \ge 0}$ is a Markov process on 
the state space $[0\,,\,\infty)$ with continuous trajectories. 
In $\HH$, the point $\im e^{-c}$ is at hyperbolic distance $c$ from $o=\im$. Thus, using \eqref{eq:rhodiff},
for $s, t > 0$,
\begin{equation}\label{eq:gec}
\begin{aligned}
\Prob[\ro(B_{s+t},o) \ge c \mid \ro(B_s,o) = c] 
&= \Prob[\ro(B_{t},o) \ge c \mid B_0 =\im e^{-c}]\\ 
&\ge \Prob[ -\log(\Im B_t) \ge c \mid  -\log(\Im B_0) = c] \ge 1/2,
\end{aligned}
\end{equation}
because by \eqref{eq:normal}, if $\; -\log(\Im B_0) = c\;$ then
$\;-\log(\Im B_t) \sim N\bigl(c+\tfrac12 t, t\bigr)\,$.

Now consider the a.s. finite stopping time
$$
T_c = \inf \{ t > 0:  \ro(B_{t}\,,o) = c \}\,. 
$$
Then, using the strong Markov property, the fact that $\ro(B_{T_c}\,,o)=c$, and isotropy
of hyperbolic BM,
$$
\begin{aligned}
\Prob[ \mathcal{M} \ge c ] &= \Prob_0[ T_c \le 1 ]\\
&= \Prob[ \ro(B_1\,,o) \ge c\,, T_c \le 1 ] + \Prob[ \ro(B_1\,,o) < c\,, T_c \le 1 ]\\
&= \Prob[ \ro(B_1\,,o) \ge c ] 
+ \int_0^1\Prob[ \ro(B_1\,,o) < c  \mid \ro(B_{s}\,,o) =c]\,\,d\,\Prob[T_c=s]\\
\noalign{\noindent now applying \eqref{eq:gec}}\\[-19pt]
&\le \Prob[ \ro(B_1\,,o) \ge c ] 
+ \int_0^1\Prob[ \ro(B_1\,,o) \ge c  \mid \ro(B_{s}\,,o) =c]\,\,d\,\Prob[T_c=s]\\
&= 2\,\Prob[ \ro(B_1\,,o) \ge c ].
\end{aligned}
$$
This proves the first of the two inequalities. The second is going to be derived
from \eqref{eq:heat1}. Note that for $t=1$, we have to use $\Psi(x)=x^{1/2}$ in 
that heat kernel estimate. 

We return to the disk model.
We can express the hyperbolic measure of \eqref{eq:hypmeas} first in terms of
Euclidean polar coordinates $(r, \varphi)$ with $r <1$ and then replace 
$r = |z|$ by $R =  \ro(z,o) = \log \frac{1+r}{1-r}$. In the coordinates 
$(R,\varphi)$, we get
\begin{equation}\label{eq:hypmeas1}
d\mm(z) = \sinh R\,\, dR\, d\varphi \quad \text{in }\;\DD\,.
\end{equation}
Then
$$
\begin{aligned}
 \Prob[ \ro(B_1,o) \ge c] &= \int_{\{z \in \DD \,:\, \ro(z,o) \ge c\}} p_1(0,z)\,d\mm(z)\\
 &\le \textsl{Const} \int_{-\pi}^{\pi} \int_{c}^{\infty}
\,  \exp\biggl(-\frac{(R+\tfrac{1}{2})^2}{2}\biggr) 
\, \frac{e^R - e^{-R}}{2}\,
dR\, d\varphi\\
&\le \textsl{Const}' \int_{c}^{\infty}
\exp\biggl(-\frac{(R-\tfrac{1}{2})^2}{2}\biggr) \, 
dR\\
&\le \textsl{Const}'' 
\exp\biggl(-\frac{(c-\tfrac{1}{2})^2}{2}\biggr) 
\end{aligned}
$$
for a suitable constant $\textsl{Const}''$. 
\end{proof}

\section{Hyperbolic branching Brownian motion}\label{sec:hypBBM}

We now construct hyperbolic branching Brownian motion. Whenever it is suitable,
we can switch between the different models of hyperbolic plane, but primarily
we have the disk model in mind.

We need a probability space $(\Omega, \A, \Prob)$ on which one can realise 
countably many  i.i.d. random variables $\ell_{\vt}\,$, $\vt \in \{\lt,\rt\}^*$ and,
independently of the latter, countably many independent hyperbolic Brownian motions 
$(B_t^{\vt})\,$, $\vt \in \{\lt,\rt\}^*$, each one starting at $0$.
If $(\Omega_b\,, \Bor_b\,, \Prob_b)$ is one probability space on which hyperbolic 
BM can be defined, then we can take the product space  
$$
\begin{gathered}
\bigl(\Omega\,, \A\,,\Prob\bigr) =
\bigl(\Omega^{\text{Yule}}\,, \A^{\text{Yule}}\,,\Prob^{\text{Yule}}\bigr)\otimes \bigl(\Omega_{\BB}\,, \A_{\BB}\,,\Prob_\BB\bigr)\,,
\quad\text{where}\\
\bigl(\Omega_{\BB}\,, \A_{\BB}\,,\Prob_{\BB}\bigr) = \bigotimes_{\vt \in \{\lt,\rt\}^*}
\bigl(\Omega_b,, \Bor_b\,,\Prob_b\bigr)_{\vt}\,,
\end{gathered}
$$ 
along with the corresponding projections.

With each fission point $\vt \in  \{\lt,\rt\}^*$, we associate a random element $\gb_{\vt} \in \Aff$, 
where $\Aff$ is the affine group, defined below \eqref{eq:moebius} for the disk model:
if $z_{\vt} = B_{\ell_{\vt}}^{\vt}o$ then $\gb_{\vt} = \gm_{z_{\vt}}$ as defined by \eqref{eq:gammaz0}. The 
$\gb_{\vt}\,$, $\vt \in \{\lt,\rt\}^*$, are i.i.d.

Furthermore, we take a starting point $z_0 \in \DD$. We shall often write 
$\Prob_{\! z_0}$ for the probability measure and $\E_{z_0}$ for the corresponding expectation
in order to remember the starting point.

Then hyperbolic branching Brownian 
motion assigns a random variable $\BB_{\tau}$
to every element $\tau$ 
of the Yule tree $\T$ (vertex or interior element of some edge) 
as follows, using the notation of \eqref{eq:edge}: 
\begin{itemize}
 \item[(A)] If the path from $\bal$ to $\vt \in \{\lt,\rt\}^*$ in our tree has 
 the vertices $\bal=\vt_0\,,\vt_1\,,\dots, \vt_k = \vt$
 (with $\vt_j' = \vt_{j-1}$) then for $\tau = {{_s}\vt} \in [\vt',\vt]$,
 $$
 \BB_{\tau} = \gm_{z_0}\gb_{\vt_1}\cdots\gb_{\vt_{k-1}}B_s^v\,.
 $$
\end{itemize}
This can also be described by  the following recursive construction:
\begin{itemize}
 \item [(B1)] At the ``ancestor'' $\bal$, 
 $$
 \BB_{\bal} = z_0\,.
 $$
\item[(B2)] If for a vertex $\vt \in \{\lt,\rt\}^*$, we already have $\BB_{\vt'} = z \in \DD$,  
then on the edge $[\vt',\vt]$, we continue with 
 $$
 \BB_{{{_s}\vt}} = \gm_{z} B_s^{\vt}\,,\quad 0 \le s \le \ell_{\vt}\,,
 $$
where the random element $g_z \in \Aff$ is given by \eqref{eq:gammaz0}, with $z$ in the place of $z_0\,$. 
\end{itemize}
In particular, in our construction, hyperbolic BBM starting at $z_0$ is the image under
$g_{z_0}$ of hyperbolic BBM starting at $0$.

\begin{rems} (a) In principle, we could choose any initial family of M\"obius transformations
such that for each $z_0 \in\DD$ there is precisely one $g_{z_0}$ mapping $o$ to $z_0$.  
For example (suggestion by Steve Lalley) one could take the map
$$
z \mapsto \frac{z + z_0}{\bar z_0z + 1}\,,
$$
which is as in \eqref{eq:moebius} with $a= 1/(1-|z_0|^2)^{1/2}$ and $c = z_0/(1-|z_0|^2)^{1/2}$.
When $z_0=0$ it is the identity map, while otherwise it fixes the 
diameter segment of the unit disk through the origin and $z_0\,$.
\\[3pt]
(b) However, the equivalence between the above two constructions (A) and (B1)+(B2) relies on the 
fact that the group $\Aff$ acts simply transitively, while this is not the case for the semigroup
generated by the mappings of (a).

\smallskip

A feature of the construction (A) is that in this way, hyperbolic BBM is governed by 
a branching random walk on the group $\Aff$. The underlying Galton-Watson tree is not random,
but the binary tree $\{\lt,\rt\}^*$. That is, each member of the corresponding population
fissions in precisely 2 children. The branching random walk starting at $g_{z_0}$ is then 
$(\gm_{z_0}\Gb_{\vt})_{\vt \in \{\lt,\rt\}^*}$, where 
\begin{equation}\label{eq:Gv}
\Gb_{\vt} =\gb_{\vt_1}\cdots\gb_{\vt_{k}}
\end{equation}
when the path from $\bep$ to $\vt$ is $[\bep = \vt_1\,, \vt_2\,,\dots, \vt_k = \vt]$. 
\end{rems}

An infinite ray $\pi$ in the Yule tree $\T$ is a line isometric with  $[0\,,\,\infty)$ 
which is spanned by a sequence of 
vertices $[\bal, \bep, ...]$ where every vertex $\vt \in \pi$ has precisely one
successor in $\pi$. Along $\pi$, the process $(\BB_{\tau})_{\tau \in \pi}$ is 
a hyperbolic BM starting at $z_0$ whose element at time $t$ is $\BB_{\tau}$ with 
$\tau = {{_s}\vt}$, where $\vt\in\pi$ is a 
vertex and $|{{_s}\vt}| = t$ as defined below \eqref{eq:edge}.
If we have two distinct rays $\pi$ and $\pi'$ then their \emph{confluent}
$\pi \wedge \pi'$ is the furthest vertex from $\bal$ shared by the two. 
Then the two hyperbolic Brownian motions along $\pi$ and $\pi'$ coincide up
to $\pi \wedge \pi'$ and thereafter continue independently. 

We repeat here an important fact which was already stated in the introduction.

\begin{pro}\label{pro:regimes}\cite{LS}
Transient regime: if $\lambda \le 1/8$ then for any compact $K \subset \DD$
$$
\Prob[ \text{there is }\; t > 0 : \BB_{\tau} \notin K\; 
\text{ for all $\tau \in \T$ with }\;|\tau| \ge t\,] =1.  
$$
Recurrent regime: if $\lambda > 1/8$ then for any non-empty open $U \subset \DD$
$$
\Prob[\,\text{for every }\; t > 0 : \BB_{\tau} \in U\; 
\text{ for some $\tau \in \T$ with }\;|\tau| \ge t \,] =1.  
$$
\end{pro}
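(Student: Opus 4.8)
The plan is to reduce everything to two ingredients: the \emph{many-to-one} and \emph{many-to-two} formulas for the branching population, combined with the sharp single-particle estimates coming from \eqref{eq:im}, \eqref{eq:normal} and the heat-kernel bound \eqref{eq:heat1}. Throughout I may assume $z_0=o$, since by our construction hyperbolic BBM from $z_0$ is the $\gm_{z_0}$-image of the one from $o$; it likewise suffices to treat $K=\{z:\ro(z,o)\le c\}$ (any compact set lies in such a ball) and, for the recurrent statement, a small ball $U$. Write $Z_t(A)=\#\{\tau\in\T(t):\BB_\tau\in A\}$. The many-to-one identity, which follows from the binary branching at rate $\lambda$ together with Lemma \ref{lem:expected} by singling out a uniform lineage (spine), reads $\E\big[\sum_{\tau\in\T(t)}f(\BB_\tau)\big]=e^{\lambda t}\,P_tf(o)$ for $f\ge 0$; in particular $\E[Z_t(A)]=e^{\lambda t}\,P_t(o,A)$.

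\textbf{Transient regime $\lambda\le 1/8$.} First I would record the single-particle estimate. Integrating the bound \eqref{eq:heat1} against $d\mm=\sinh R\,dR\,d\varphi$ (cf. \eqref{eq:hypmeas1}) over the ball $K$, where $R=\ro(z,o)\le c$ stays bounded while $t\to\infty$, the exponent satisfies $(R+\tfrac t2)^2/(2t)=\tfrac t8+\tfrac R2+O(1/t)$, and the factor $\Psi((1+R)/t)\approx((1+R)/t)^{3/2}$ supplies the crucial polynomial decay, giving $P_t(o,K)\le C_K\,t^{-3/2}e^{-t/8}$ and hence $\E[Z_t(K)]\le C_K\,t^{-3/2}e^{(\lambda-\frac18)t}$. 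To pass from fixed times to the events $V_n=\{\text{some }\tau\text{ with }|\tau|\in[n,n+1]\text{ has }\BB_\tau\in K\}$ I would count lineages reaching time $n+1$ whose path entered $K$ during $[n,n+1]$: on $V_n$ there is at least one such (no particle dies), so $\Prob[V_n]\le e^{\lambda(n+1)}\,\Prob_o[\,\ro(B_s,o)\le c\text{ for some }s\in[n,n+1]\,]$ by many-to-one. By the Markov property at time $n$ and the isometry-invariance of hyperbolic BM, entering $K$ during $[n,n+1]$ forces a displacement of at least $\ro(B_n,o)-c$ in unit time, whose probability is controlled by Proposition \ref{pro:max}; convolving this with the left tail of $\ro(B_n,o)$ (governed by \eqref{eq:normal}) reproduces the same order $C\,n^{-3/2}e^{-n/8}$, so $\Prob[V_n]\le C\,n^{-3/2}e^{(\lambda-\frac18)n}$. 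This is summable precisely because the polynomial factor survives at the critical value $\lambda=1/8$, and Borel--Cantelli then shows that only finitely many $V_n$ occur, i.e. $K$ is eventually vacated.

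\textbf{Recurrent regime $\lambda>1/8$.} Here the first moment diverges: the two-sided heat-kernel bounds of \cite{DM}, used as above, give $\E[Z_t(U)]\ge c_U\,t^{-3/2}e^{(\lambda-\frac18)t}\to\infty$. To turn this into a genuine hit I would run a second-moment (Paley--Zygmund) argument. Splitting a pair of lineages at their confluent time $r$, the many-to-two formula takes the form $\E[Z_t(U)^2]=\E[Z_t(U)]+c_\lambda\int_0^t e^{\lambda r}\,\E_o\big[\,g_{t-r}(B_r)^2\,\big]\,dr$, where $g_s(z)=\E_z[Z_s(U)]=e^{\lambda s}P_s(z,U)$ and $B_r$ is the spine. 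The task is to bound the confluent integral by $C\,(\E[Z_t(U)])^2$, the dominant contribution coming from small $r$; the estimates above on $P_s(\cdot,U)$ make this a careful but routine computation. Paley--Zygmund then produces $\delta>0$ with $\Prob_o[Z_{t_k}(U)>0]\ge\delta$ along a sequence $t_k\to\infty$.

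Finally I would upgrade positive probability to probability one through the branching structure. Writing $h(z)=\Prob_z[\text{BBM visits }U\text{ at some time}]$ and conditioning on the configuration $\F_t$ at time $t$, the conditional probability that $U$ is never visited after $t$ equals $\prod_{\tau\in\T(t)}(1-h(\BB_\tau))$, since the subtrees rooted at the particles of $\T(t)$ are conditionally independent copies of hyperbolic BBM. The positive-probability statement, fed back together with the growth $\nn(t)\sim We^{\lambda t}$ from Proposition \ref{pro:mart}, shows $\sum_{\tau\in\T(t)}h(\BB_\tau)\to\infty$ almost surely, so the product tends to $0$ and $\Prob[U\text{ visited after }t]\to 1$; being non-increasing in $t$, this probability equals $1$ for every $t$, and intersecting over $t$ gives the claim. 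I expect the second-moment bound $\E[Z_t(U)^2]\le C(\E[Z_t(U)])^2$ to be the crux: one must rule out domination by pairs that branch late and then both improbably return to $U$, which is exactly where $\lambda>1/8$ re-enters. The transient half is comparatively soft, the only delicate point being the retention of the $t^{-3/2}$ factor needed to close the critical case $\lambda=1/8$.
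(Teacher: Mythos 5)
A preliminary remark: the paper does not prove Proposition \ref{pro:regimes} at all; it states it with the citation \cite{LS}. So there is no internal proof to compare against, and your attempt has to be judged on its own merits and against that source.

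Your transient half is essentially correct. The heat-kernel bound \eqref{eq:heat1} does give $P_t(o,K)\le C_K\,t^{-3/2}e^{-t/8}$ for a fixed ball $K$, many-to-one turns this into $\E[Z_t(K)]\le C_K\,t^{-3/2}e^{(\lambda-\frac18)t}$, the passage to the interval events $V_n$ via the Markov property, isometry invariance and Proposition \ref{pro:max} works as you describe (note that Lemma \ref{lem:many-one} is stated for path functionals, so your counting of lineages that enter $K$ during $[n,n+1]$ is legitimate), and you correctly single out that the polynomial factor $t^{-3/2}$ is what keeps the series summable at the critical value $\lambda=1/8$. The first part of your recurrent half is also sound: your many-to-two formula is the right one, and the bound $\E[Z_t(U)^2]\le C\,(\E[Z_t(U)])^2$ does hold for $\lambda>1/8$ (branching times $r\in[t-1,t]$ contribute only $O(\E[Z_t(U)])$, small $r$ gives $O((\E[Z_t(U)])^2)$, and the intermediate range is killed by the factor $e^{(\frac18-\lambda)r}$), so Paley--Zygmund yields $\Prob_o[Z_t(U)>0]\ge\delta>0$ for all large $t$.

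The genuine gap is the final upgrade from positive probability to probability one. You assert that the positive-probability statement, ``fed back together with the growth $\nn(t)\sim \ww e^{\lambda t}$'', shows $\sum_{\tau\in\T(t)}h(\BB_\tau)\to\infty$ almost surely. This does not follow from what you have established. The second-moment argument, run from a general starting point $z$, yields only $h(z)\ge c\,(1+d)\,e^{-d/2}$ with $d=\ro(z,o)$; no uniform lower bound on $h$ is available at this stage, and treating $h$ as bounded below would be circular, since $h\equiv 1$ is essentially the statement being proved. With only this bound, and knowing (Lemma \ref{lem:compact}, Theorem \ref{thm:normal}) that the bulk of the $\approx \ww e^{\lambda t}$ particles sits at distance $\approx t/2$ from $o$, the naive count gives $\sum_\tau h(\BB_\tau)\approx e^{\lambda t}\cdot t\,e^{-t/4}$, which tends to $0$ throughout the range $1/8<\lambda<1/4$. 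So precisely in the delicate part of the recurrent regime your argument proves nothing: the sum is dominated by atypically close particles, and its a.s.\ divergence (it does diverge, at the rate $e^{(\lambda-\frac18)t}$ of its expectation) is a statement of the same depth as recurrence itself. One way to close the gap: for $\xi\in\partial\DD$ consider the additive martingales $M_t(\xi)=e^{-(\lambda-\frac18)t}\sum_{\tau\in\T(t)}\Poiss(\BB_\tau,\xi)^{1/2}$; in the upper half-plane picture with $\xi=\im\infty$ this is the additive martingale of the drift-$\frac12$ Euclidean BBM at parameter $\frac12$, and by Biggins' theorem it is uniformly integrable with a.s.\ positive limit exactly when $\frac12<\sqrt{2\lambda}$, i.e.\ $\lambda>1/8$. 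Integrating over $\xi$ and using $\int_{\partial\DD}\Poiss(z,\xi)^{1/2}\,d\xi\asymp(1+\ro(z,o))\,e^{-\ro(z,o)/2}$ then gives $\sum_\tau h(\BB_\tau)\gtrsim e^{(\lambda-\frac18)t}\to\infty$ almost surely, which is the missing ingredient. Note that this is a second, independent place where $\lambda>1/8$ enters; it is not, as you suggest at the end, only the second-moment bound that carries the weight of the hypothesis.
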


\section{The maximal and minimal distances}\label{sec:max}

We first consider an issue which has been intensively studied for 
Euclidean BBM since its connection with the pioneering work of {\sc Kolmogorov, Petrovski and
Piskunov}~\cite{KPP} was elaborated by {\sc McKean}~\cite{McK}: the behaviour of the maximal and minimal 
hyperbolic distances from the starting point of the population at time $t$. 
For details in the Euclidean case, see e.g. {\sc Bramson}~\cite{Br}, {\sc Bovier}~\cite{Bo} 
and {\sc Roberts}~\cite{Ro} and, in higher dimension, {\sc Kim, Lubetzky and Zeitouni}~\cite{KLZ}.

\begin{theorem}\label{thm:minmax1} Set 
$$
\mathsf{Max}_t = \max \bigl\{ \ro(\BB_{\tau},o) : \tau \in \T(t) \bigr\} \AND  
\mathsf{Min}_t = \min \bigl\{ \ro(\BB_{\tau},o) : \tau \in \T(t) \bigr\}\,.
$$
Then
$$
\frac{\mathsf{Max}_t}{t} \to r^* 
= \frac12+\sqrt{2\lambda}
\AND
\frac{\mathsf{Min}_t}{t} \to 
\begin{cases} r_* = \dfrac12-\sqrt{2\lambda}\,, &\text{if }\; 0 < \lambda \le 1/8\,,\\[3pt]
              0\,, &\text{if }\; \lambda \ge 1/8 
\end{cases}
$$
almost surely, as $t \to \infty\,$. As a matter of fact, 
$$
\limsup_{t \to \infty} \mathsf{Min}_t < \infty \quad \text{almost surely, when }\; \lambda > 1/8. 
$$
\end{theorem}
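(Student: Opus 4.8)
The plan is to prove the upper bound $\limsup_t \mathsf{Min}_t < \infty$ by producing, almost surely, a self-regenerating sub-population that keeps returning to a fixed ball around $o$ at a positive density of times. Throughout I take $z_0 = o$ (the general case follows since hyperbolic BBM from $z_0$ is the $\gm_{z_0}$-image of the one from $o$, and $\gm_{z_0}$ distorts distances to $o$ only by the bounded additive amount $\ro(z_0,o)$). Fix $\rho > 0$ and write $K_\rho = \{z \in \DD : \ro(z,o)\le\rho\}$. The key quantity is the expected number of particles in $K_\rho$ at time $s$, which by the many-to-one identity (each lineage is a hyperbolic BM from $o$, and $\E(\nn(s)) = e^{\lambda s}$ by Lemma \ref{lem:expected}) equals $m_s := \E\bigl[\#\{\tau\in\T(s): \BB_\tau\in K_\rho\}\bigr] = e^{\lambda s}\,\Prob_o[\ro(B_s,o)\le\rho]$. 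Using the two-sided Gaussian heat-kernel bounds of \cite{DM} (the upper one being \eqref{eq:heat1}) together with the volume element \eqref{eq:hypmeas1} --- exactly as in the proof of Proposition \ref{pro:max} --- one gets $\Prob_o[\ro(B_s,o)\le\rho]\asymp s^{-3/2}e^{-s/8}$, whence $m_s\asymp s^{-3/2}e^{(\lambda-1/8)s}\to\infty$ precisely because $\lambda > 1/8$.

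First I would turn this growth into a supercritical embedded branching process. A second-moment (many-to-two) computation --- decomposing a pair of particles at their confluent vertex into the common lineage up to the confluent time $r$ and two conditionally independent lineages afterwards, with the dominant mass coming from $r$ close to $s$ --- yields $\E\bigl[(\#\{\tau\in\T(s):\BB_\tau\in K_\rho\})^2\bigr]\le C\,m_s^2$ with $C$ uniform for large $s$. By Harnack/continuity of the heat kernel the same two bounds hold, up to constants, when the process starts from any $z\in K_\rho$ rather than from $o$. Paley--Zygmund then gives a constant $c>0$ and, for all large $s_0$, an integer $M = M(s_0)\ge 2$ such that a single particle located anywhere in $K_\rho$ has, with probability at least $c$, at least $M$ descendants in $K_\rho$ at time $s_0$ later, uniformly over the starting point. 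Consequently the process of ``particles in $K_\rho$ at the times $k s_0$, $k = 0,1,2,\dots$'' dominates from below a Galton--Watson process whose offspring law puts mass $\ge c$ on $M$; fixing $s_0$ large enough that $cM>1$ makes this process supercritical, so it survives with some probability $q>0$.

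Next I would upgrade ``survives with probability $q$'' to ``survives almost surely for at least one seed.'' By Proposition \ref{pro:regimes} (recurrent regime), almost surely there are stopping lines carrying particles into $K_\rho$ at a sequence of times $\sigma_1<\sigma_2<\cdots\to\infty$; by the strong branching (Markov) property the subtrees issued from these successive visits are independent, and each launches a copy of the embedded supercritical Galton--Watson process above, surviving with probability $\ge q$ independently of the others. Borel--Cantelli then forces at least one of them to survive. On the survival of the seed at time $\sigma_j$ there are, for every $k$, particles of $\T$ in $K_\rho$ at time $\sigma_j + k s_0$, and in fact their number grows exponentially in $k$. To fill the gaps between consecutive lattice times I would use that each such particle has probability bounded below (a single-particle estimate controlled by Proposition \ref{pro:max}) that a chosen descendant stays inside $K_{2\rho}$ throughout the following interval of length $s_0$; since there are exponentially many candidates at each lattice time, a Borel--Cantelli argument with summable failure probabilities guarantees that for all large $t$ some particle lies in $K_{2\rho}$. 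Hence $\limsup_t \mathsf{Min}_t \le 2\rho < \infty$ almost surely.

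The main obstacle is precisely the passage from the recurrence of Proposition \ref{pro:regimes}, which only controls $\liminf_t\mathsf{Min}_t$ (it provides visits to $K_\rho$ along some unbounded sequence of times), to a statement about \emph{every} large $t$. The supercritical embedded Galton--Watson process is the device that converts sporadic returns into a persistently regenerating population, and I expect its construction --- the genuinely two-dimensional first- and second-moment estimates for the return count, as opposed to the one-dimensional drifted-Brownian comparison that suffices for the rates of $\mathsf{Max}_t$ and $\mathsf{Min}_t$ --- to be the real work, since the event $\{\BB_\tau\in K_\rho\}$ couples the vertical (height) and horizontal coordinates and cannot be read off from \eqref{eq:im} alone.
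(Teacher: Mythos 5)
Your proposal proves only the last assertion of the theorem. The principal claim, $\mathsf{Max}_t/t \to r^* = \tfrac12 + \sqrt{2\lambda}$ almost surely, is nowhere addressed: your parenthetical remark that ``the one-dimensional drifted-Brownian comparison suffices for the rates of $\mathsf{Max}_t$ and $\mathsf{Min}_t$'' is not a proof, and in fact the comparison via \eqref{eq:im} only yields the \emph{lower} bound, because $\ro_{\HH}(z,o) \ge -\log \Im z$ is an inequality in one direction only. The matching upper bound is where the paper does real work: the many-to-one Lemma \ref{lem:many-one} reduces $\Prob[\mathsf{Max}_t \ge Ct]$ to $e^{\lambda t}\Prob[\ro(B_t,o)\ge Ct]$, which is then controlled by the genuinely two-dimensional heat-kernel bound \eqref{eq:heat1}, giving $\exp\bigl(-((C-\tfrac12)^2 - 2\lambda)t/2\bigr)$ and Borel--Cantelli along integer times; after that one must still interpolate between integers, which the paper does via Proposition \ref{pro:max} and a conditional-independence argument over the unit-height subtrees rooted at $\T(n)$. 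None of this is in your write-up, so as a proof of the stated theorem it is substantially incomplete. (Deferring the transient-regime rate of $\mathsf{Min}_t$ is fine; the paper also defers it to Theorem \ref{thm:minmax2}.)

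For the part you do treat, your route is genuinely different from the paper's, which disposes of the recurrent-regime minimum in one line by quoting \cite[Cor.~3]{LS}: almost surely there is an infinite ray of $\T$ along which the Brownian trajectory stays in a compact set, which immediately gives $\limsup_t \mathsf{Min}_t < \infty$. Your embedded supercritical Galton--Watson construction is a legitimate self-contained alternative, and its analytic core is right: $m_s \asymp s^{-3/2}e^{(\lambda - 1/8)s}$ is correct (the spectral gap of $\tfrac12\Lap$ is $1/8$), and the many-to-two bound $\E[N_s^2]\le C m_s^2$ does hold --- though the dominant contribution comes from confluent times near $0$, not near $s$ as you claim: an early split produces two nearly independent subtrees each of expected count $\approx m_s$, and that is exactly what generates the $m_s^2$ term, while late splits contribute only $O(m_s)$. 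The real flaw is the seeding step: the subtrees issued from successive visits to $K_\rho$ are \emph{not} independent --- a later visiting particle may well be a descendant of an earlier seed (possibly all later visits are), and the visit times are themselves defined by the whole tree --- so the strong branching property does not hand you i.i.d.\ trials, and plain Borel--Cantelli does not apply as stated. This is fixable: either run a recursion with stopping times (seed attempt $j+1$ at the first visit to $K_\rho$ after the --- $\F$-stopping --- extinction time of attempt $j$; conditionally on $\F$ at each seed time the success probability is $\ge q$, so all attempts fail with probability $\le \lim_j(1-q)^j = 0$), or invoke L\'evy's $0$--$1$ law: with $G$ the event that $K_{2\rho}$ is eventually always occupied, $\Prob[G\mid \F_t] \to \uno_G$ a.s., while on the recurrence event of Proposition \ref{pro:regimes} one has $\Prob[G\mid\F_t]\ge q' >0$ along the random times at which $K_\rho$ is occupied, forcing $\Prob[G]=1$. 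With such a repair your argument for the minimum would stand, at the cost of being far heavier than the paper's citation of \cite{LS}; but the theorem as a whole remains unproved without the maximum.
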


We shall need to prove this, in particular for the minimum, only in the recurrent regime
$\lambda > 1/8$, since in the transient regime a stronger result will be shown further below.
As a matter of fact, the transient regime is included in Theorem \ref{thm:minmax1} only in
order to provide a comprehensive picture for the rates of maximum and minimum.

We need known results. 
Let $\T$ be the Yule tree with parameter $\lambda$ as in \S \ref{sec:yule}, and let 
$(\boldsymbol{\beta}_{\tau})_{\tau \in \T}$ be the associated standard Euclidean BBM.

\begin{pro}\label{pro:Euc}
Set $\;
\mathsf{M}_t = \max \bigl\{ \boldsymbol{\beta}_{\tau} : \tau \in \T(t) \bigr\}\,$.
Then 
$$
\frac{\mathsf{M}_t - \sqrt{2\lambda}t}{\log t} \to - \frac{3}{\sqrt{8\lambda}}
\quad \text{in probability.}
$$
Furthermore
$$
\liminf_{t \to \infty} \frac{\mathsf{M}_t - \sqrt{2\lambda}t}{\log t} 
= - \frac{3}{\sqrt{8\lambda}}
\AND 
\limsup_{t \to \infty} \frac{\mathsf{M}_t - \sqrt{2\lambda}t}{\log t} = - \frac{1}{\sqrt{8\lambda}}
$$
almost surely. In particular, $\; \frac{1}{t}\mathsf{M}_t \to \sqrt{2\lambda}\;$ almost surely.
\end{pro}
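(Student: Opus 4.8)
The plan is to deduce everything from the classical frontier asymptotics of \emph{standard} branching Brownian motion, that is, the normalised case $\lambda = 1$ (binary fission at rate $1$, expected population $e^t$ by Lemma~\ref{lem:expected}). In that normalisation the convergence in probability with centring $\sqrt2\,t - \tfrac{3}{2\sqrt2}\log t$ is Bramson's theorem, while the almost sure fluctuations $\limsup_s (\wt{\mathsf M}_s - \sqrt2\,s)/\log s = -\tfrac{1}{2\sqrt2}$ and $\liminf_s(\wt{\mathsf M}_s - \sqrt2\,s)/\log s = -\tfrac{3}{2\sqrt2}$ are the results of Roberts~\cite{Ro}; here $\wt{\mathsf M}_s$ denotes the maximum of standard BBM at time $s$. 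All the genuine probabilistic content sits in these cited works, so the task reduces to transporting the three constants from $\lambda = 1$ to general $\lambda > 0$ by a scaling argument.

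The key step is an exact space--time rescaling coupling the two models. First I would start from standard BBM $(\wt{\boldsymbol\beta}_\sigma)$ on a rate-$1$ Yule tree $\wt\T$ and divide every edge length by $\lambda$: since an $\mathrm{Exp}(1)$ length divided by $\lambda$ is $\mathrm{Exp}(\lambda)$, this produces a rate-$\lambda$ Yule tree $\T$ in which graph distance from the root equals $1/\lambda$ times the distance in $\wt\T$. Putting $\boldsymbol\beta = \lambda^{-1/2}\wt{\boldsymbol\beta}$ on the corresponding points, Brownian scaling (along each root path $(\wt{\boldsymbol\beta}_{\lambda t})_t \stackrel{d}{=} (\sqrt\lambda\,\boldsymbol\beta_t)_t$, with independence across edges preserved) shows that $(\boldsymbol\beta_\tau)_{\tau\in\T}$ is standard Euclidean BBM on the rate-$\lambda$ Yule tree. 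Reading off the maxima over level sets gives the process identity $\mathsf M_t = \lambda^{-1/2}\,\wt{\mathsf M}_{\lambda t}$, valid almost surely under this coupling, hence as an equality in law of the whole trajectories $(\mathsf M_t)_{t\ge0}$; this is what allows me to transfer not only the in-probability limit but also the pathwise $\limsup$/$\liminf$ statements.

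It then remains to substitute. Using
\[
\frac{\mathsf M_t - \sqrt{2\lambda}\,t}{\log t} = \lambda^{-1/2}\,\frac{\wt{\mathsf M}_{\lambda t} - \sqrt2\,(\lambda t)}{\log t}
\]
together with $\log(\lambda t) = \log t + \log\lambda \sim \log t$, the $\lambda = 1$ asymptotics transfer: the in-probability limit becomes $\lambda^{-1/2}\cdot(-\tfrac{3}{2\sqrt2}) = -\tfrac{3}{\sqrt{8\lambda}}$, while the almost sure $\limsup$ and $\liminf$ become $-\tfrac{1}{\sqrt{8\lambda}}$ and $-\tfrac{3}{\sqrt{8\lambda}}$ respectively (the positive factor $\lambda^{-1/2}$ preserves the ordering). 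Finally $\mathsf M_t/t \to \sqrt{2\lambda}$ almost surely is immediate, since the bounded almost sure $\limsup$/$\liminf$ force $\mathsf M_t = \sqrt{2\lambda}\,t + O(\log t)$. The only point demanding care is precisely the coupling: one must verify that the joint rescaling of the tree metric and of the spatial Brownian increments maps one model exactly onto the other, and that the additive $\log\lambda$ shift in the denominator is asymptotically negligible. Beyond this bookkeeping, the statement is a direct consequence of \cite{Ro} and Bramson's work, so I do not anticipate any serious obstacle.
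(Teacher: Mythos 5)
Your proposal is correct and follows essentially the same route as the paper: both cite the classical $\lambda=1$ results (Bramson for the in-probability centring, Roberts \cite{Ro} for the almost sure $\limsup$/$\liminf$) and then transfer the constants to general $\lambda$ via the space--time scaling that couples the rate-$\lambda$ Yule tree to the rate-$1$ tree (time scaled by $\lambda$, space by $\sqrt{\lambda}$). Your write-up merely makes explicit two points the paper leaves implicit, namely that the coupling is an exact pathwise identity $\mathsf{M}_t = \lambda^{-1/2}\,\wt{\mathsf{M}}_{\lambda t}$ and that the shift $\log(\lambda t) = \log t + \log\lambda$ is asymptotically negligible.
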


\begin{proof} In the literature,  usually only the case $\lambda =1$ is considered for
the fissioning rate (i.e., the parameter of the exponential distribution of the 
edge lengths of the Yule tree). Convergence in probability is classical, and much more is
true. See
\cite{McK}, \cite{Br}, \cite[\S 3]{Bo}), and \cite[Thm. 2]{Ro} for the oscillations 
in a.s. convergence.

From here, one easily gets to our general $\lambda\,$: write $\lambda \T$ for the random tree 
where all the edge lengths are multiplied by $\lambda$.
This models the Yule tree with fissioning rate~$1$. 
Then by the scale-invariance of BM, 
$(\frac{1}{\sqrt{\lambda}}\,\boldsymbol{\beta}_{\lambda\tau})_{\tau \in \T}$
is standard Euclidean BBM, to which we can apply the result for rate $1$ and then go back. 
\end{proof}

A tool that will be used a few times is the well-known ``Many-to-one Lemma'' which we state here 
for hyperbolic BBM on $\DD$ (equivalently on $\HH$). 
See e.g. {\sc Harris and Roberts}~\cite{HaRo} or {\sc Shi}~\cite{Shi} for general versions;
the version stated here is taken from {\sc J.~Beresticky}~\cite{Ber} (adapted hyperbolically).

\begin{lem}\label{lem:many-one} For $t > 0$, let $C_{[0,t]}^{\DD}$ be the space of continuous
functions $[0,t] \to \DD$, and 
let $F: C_{[0,t]}^{\DD} \to \R$ be a bounded measurable function. For $\tau \in \T(t)$, 
consider the continuous function $s \mapsto \BB_{\tau(s)}\,$, where $\tau(s)$ is the element on
the geodesic in $\T$ from $\bal$ to $\tau$ with $|\tau(s)|=s$.
Then
$$
\E \Biggl(\, \sum_{\tau \in \T(t)} F(s \mapsto \BB_{\tau(s)}\,, s \le t)\! \Biggr) = 
e^{\lambda t} \, \E \bigl(F(s \mapsto B_s\,, s \le t)\bigr).
$$
\end{lem}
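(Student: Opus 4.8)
The plan is to condition on the genealogy and exploit that, along any single lineage, the spatial motion is an ordinary hyperbolic BM; after conditioning, the expected sum then collapses to (expected population size) $\times$ (expectation of $F$ along one Brownian path). Concretely, I would condition on the Yule $\sigma$-algebra $\F_t^{\text{Yule}}$, which determines the cut tree $\T(\vdash\! t)$, hence the population $\T(t)$ together with all its lineages $s \mapsto \tau(s)$, $s \le t$, and their number $\nn(t)$. By the product structure of $(\Omega,\A,\Prob)$, the driving motions $(B^\vt)$ are independent of $\F_t^{\text{Yule}}$, so conditioning on the tree imposes no constraint on the motion.

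The key step is a spatial fact. I would fix a realisation of the tree and a particle $\tau \in \T(t)$, with lineage $\bal = \vt_0, \vt_1, \dots, \vt_k$ and (now deterministic) edge lengths $\ell_{\vt_1}, \dots, \ell_{\vt_k}$, and argue that $s \mapsto \BB_{\tau(s)}$ is a hyperbolic BM on $[0,t]$ started at the initial point. This is precisely the ray observation made above: on each edge the process equals $\gm_z B_\cdot^\vt$, which by the isometry-invariance of hyperbolic BM is a hyperbolic BM started at $z = \BB_{\vt'}$; since the fission times are deterministic given $\F_t^{\text{Yule}}$ and the segments $B^{\vt_j}$ are independent, the Markov (concatenation) property glues them into a single hyperbolic BM on $[0,t]$. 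It then follows that $\E\bigl(F(s \mapsto \BB_{\tau(s)}, s \le t) \mid \F_t^{\text{Yule}}\bigr) = \E\bigl(F(s \mapsto B_s, s \le t)\bigr)$, the same value for every $\tau \in \T(t)$ and independent of the tree.

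To assemble, I would note that $\T(t)$ is $\F_t^{\text{Yule}}$-measurable and a.s. finite, so linearity of conditional expectation gives $\E\bigl(\sum_{\tau \in \T(t)} F \mid \F_t^{\text{Yule}}\bigr) = \nn(t)\,\E\bigl(F(s \mapsto B_s)\bigr)$; taking expectations and inserting $\E(\nn(t)) = e^{\lambda t}$ from Lemma \ref{lem:expected} yields the claim. Integrability causes no trouble, since $\bigl|\sum_\tau F\bigr| \le \|F\|_\infty\,\nn(t)$ with $\E(\nn(t)) < \infty$, which justifies all interchanges.

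The hard part will be the spatial fact: one must check carefully that conditioning on the genealogy leaves the law of the motion along a fixed lineage unchanged and equal to that of an unconditioned hyperbolic BM. This is where the independence of the Yule and Brownian components, the isometry-invariance, and the concatenation property have to be combined cleanly; everything else is bookkeeping plus an appeal to Lemma \ref{lem:expected}. A recursion on the first fission time $\ell_\bep$ would be an alternative route, but the need to split $F$ across the fission time makes it messier than the conditioning argument.
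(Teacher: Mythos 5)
Your proof is correct, but it is worth noting that the paper itself does not prove this lemma at all: it states it as a known tool and refers to Harris--Roberts \cite{HaRo}, Shi \cite{Shi} and Berestycki \cite{Ber} for general versions, which are typically established via spine decompositions and changes of measure so as to cover random offspring numbers and spatially dependent branching rates. Your argument is a genuinely self-contained alternative that exploits the special structure available here: branching is binary at constant rate, and the construction in \S\ref{sec:hypBBM} realises the genealogy and the driving motions on independent factors of a product space. Conditioning on the Yule information (or, even more simply, on the full collection of edge lengths $\sigma(\ell_{\vt} : \vt \in \{\lt,\rt\}^*)$, which avoids any delicacy about the stopped sigma-algebra $\F_t^{\text{Yule}}$) freezes the tree, and your ``spatial fact'' --- that each frozen lineage is an honest hyperbolic BM --- follows exactly as you say from construction (B2), isometry invariance, and the Markov/concatenation property, since each group element $\gm_{z_0}\gb_{\vt_1}\cdots\gb_{\vt_{j-1}}$ is measurable with respect to the earlier segments while the next segment $B^{\vt_j}$ is independent of them. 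One point you should make explicit to be fully rigorous: since the enumeration of $\T(t)$ is itself random, the clean way to sum is over deterministic vertices $\vt \in \{\lt,\rt\}^*$, writing the sum as $\sum_{\vt} F_{\vt}\,\uno_{[\,|\vt'| \le t < |\vt|\,]}$ where the indicator is tree-measurable and $F_{\vt}$ is $F$ evaluated along the path through $\vt$; then independence gives $\E(F_{\vt}\,\uno) = \E\bigl(F(s \mapsto B_s)\bigr)\Prob[\,|\vt'| \le t < |\vt|\,]$, and summing yields $\E\bigl(F(s\mapsto B_s)\bigr)\,\E\bigl(\nn(t)\bigr) = e^{\lambda t}\,\E\bigl(F(s\mapsto B_s)\bigr)$ by Lemma \ref{lem:expected}. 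This is exactly the bookkeeping you allude to, and with it your argument is complete; what it buys over the cited general route is elementarity, at the cost of relying on the position-independent, non-random offspring mechanism.
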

We shall typically use $F(f)$ to be a function of $\ro\bigl(f(t),o\bigr)$
or $\uno_K\bigl(f(t)\bigr)$, where $K \subset \DD$ is compact, or a function of 
$\max \bigl\{ \ro\bigl(f(s),o\bigr) :  s \le t \}$.

\begin{proof}[Proof of Theorem \ref{thm:minmax1}]
We start with the minimal distance, restricting ourselves to the recurrent regime $\lambda > 1/8$.
By \cite[Cor. 3]{LS}, with probability 1 there is an infinite ray
in $\T$ along which the entire trajectory of the Brownian particles remains in a compact set. 
This implies
the last statement of the theorem, and thus also that the rate of the minimal distance is $0$. 
For the transient regime, we refer to Theorem \ref{thm:minmax2} below.

\medskip

We now consider the rate of the maximal distance.
In the upper half plane model, 
the family $(-\log \Im \BB_{\tau}^{\HH})_{\tau \in \T}$ is 
one-dimensional Euclidean branching BM with drift $1/2$, see \eqref{eq:im}. Proposition \ref{pro:Euc}
yields
\begin{equation}\label{eq:maxreal}
\frac{1}{t}\max\bigl\{ -\log \Im \BB_{\tau} : \tau \in \T(t) \bigr\} 
= \frac12 + \frac{1}{t}\mathsf{M}_t \to r^*
\quad \text{almost surely, as }\; t \to \infty 
\end{equation}
Furthermore, again in the upper half plane model,
$\ro_{\HH}(z,o) \ge -\log \Im z$ by \eqref{eq:rhodiff}, which we have used already. 
This and \eqref{eq:maxreal} imply that
$$
\liminf_{t \to \infty} \frac{\mathsf{Max}_t}{t}  \ge r^* \quad \text{almost surely.}
$$
For what follows, it is preferable to return to the disk model. 
Let us assume, without loss of generality, that the starting point  is $o$. 
We choose a constant $C > r^*$.
By Lemma \ref{lem:many-one}
$$ 
\begin{aligned}
\Prob[ \mathsf{Max}_t \ge Ct] 
&= \Prob\Biggl(\, \bigcup_{\tau \in \T(t)} [ \ro(\BB_{\tau}\,,o) \ge Ct] \!\Biggr)\\ 
&\le \E \Biggl[\, \sum_{\tau \in \T(t)} \uno_{[Ct\,,\,\infty)}\bigl( \ro(\BB_{\tau}\,,o) \bigr) 
  \!\Biggr]
  = e^{\lambda t} \,\Prob[ \ro(B_t\,,o) \ge Ct]\,. 
\end{aligned}
$$ 
We need to estimate $\Prob[ \ro(B_t,o) \ge Ct]$.  This works similarly as in the 
last part of the proof of Proposition \ref{pro:max}.  We apply once more 
\eqref{eq:heat1} with the measure $\mm$ as  in \eqref{eq:hypmeas1}. Up to a change of the leading 
constant in \eqref{eq:heat1}, we can also use $\Psi(x) = x^{1/2}$ for  
$x \ge 1/2$ (instead of $x \ge 1$).
This applies to the range which we are now considering, namely $R \ge Ct \ge r^*t \ge t/2$,
in which case $R - t/2 > \sqrt{2\lambda}\,t$. For $t \ge 1/(2\lambda)$, we have 
$(R-t/2)/t \ge 1/\sqrt{t}$ and get the following estimate for 
(non-branching) hyperbolic BM in $\DD$ starting at $o$. 
$$
\begin{aligned}
 \Prob[ \ro(B_t,o) \ge C\, t] &= \int_{\{z : \ro(z,o) \ge Ct\}} p_t(o,z)\,d\mm(z)\\
 &\le \textsl{Const} \int_{-\pi}^{\pi} \int_{Ct}^{\infty}
\, \frac{1}{\sqrt{t}} \, \exp\biggl(-\frac{(R+\tfrac{t}{2})^2}{2t}\biggr) \, \frac{e^R - e^{-R}}{2}\,
dR\, d\varphi\\
&\le \textsl{Const}'  \int_{Ct}^{\infty}\frac{R - \tfrac{t}{2}}{t} \, 
\exp\biggl(-\frac{(R-\tfrac{t}{2})^2}{2t}\biggr)\, dR\\
&= \textsl{Const}'\,\,
\exp\biggl(-\frac{(C-\tfrac12)^2 t}{2}\biggr),
\end{aligned}
$$
(The constants are not assumed to be the same as in the proof of Proposition \ref{pro:max}.)
Combining our computations, we get 
$$
\Prob[ \mathsf{Max}_t \ge Ct] \le \textsl{Const}' \, \,
\exp\biggl(-\frac{\bigl((C-\tfrac12)^2 - 2\lambda\bigr) t}{2}\biggr)\,,\quad \text{for }\; 
t \ge \frac{1}{2\lambda}\,.
$$ 
Since $C > r^*$, we have $(C-\tfrac12)^2 - 2\lambda > 0$.
Consequently, at integer times
$$
\sum_{n=1}^{\infty} \Prob[ \mathsf{Max}_n \ge Cn] < \infty\,,
$$
and by the Borel-Cantelli Lemma, 
$$
\limsup_{n\to \infty} \frac{\mathsf{Max}_n}{n} \le C  \quad \text{almost surely.} 
$$
This holds for every $C > r^*$, whence in view of the lower bound
$$
\lim_{n\to \infty} \frac{\mathsf{Max}_n}{n} = r^* \quad \text{almost surely.}
$$
We now need to fill in the ``gaps'' for real $t \in (n-1\,,\,n)$, where 
$n$ runs through the positive integers. For this purpose, we first 
define
$$
\overline{\!\mathcal{M}} = \max \{ \ro(\BB_{\tau}\,,o) : \tau \in \T(\vdash\! 1)\}
$$
\emph{Claim 1.} For any $c > 0$, 
$$
\Prob[\,\overline{\!\mathcal{M}} > c] \le e^{\lambda} \, \Prob[\mathcal{M} >c]\,,
$$
where $\mathcal{M}$ is as in Proposition \ref{pro:max}, that is, the maximal distance
of ordinary hyperbolic BM from the starting point within the time interval $[0\,,\,1]$. 
\\[5pt]
\emph{Proof of Claim 1.} For each $\tau \in \T(1)$, consider the 
geodesic path in $\T$ from the root $\bal$ to~$\tau$. Along this timeline,
we see an ordinary hyperbolic BM starting at $0$, running up to time $1$. 
Let $\mathcal{M}_{\tau}$ be the maximal distance from $0$ of this BM. Then 
$$
\overline{\!\mathcal{M}} = \max \{ \mathcal{M}_{\tau} : \tau \in \T(1) \}.
$$
Claim 1 now follows from Lemma \ref{lem:many-one}.

Note that the event $[\,|\vt| \notin \N \;\text{for all}\; \vt \in \{\lt,\rt\}^*]$ has
probability $1$. We work on that event.
Now let $t > 0$ and $n = \lfloor t \rfloor$. 
Each $\tau \in \T(n)$ is the root of a single subtree $\T_{\tau} = \T_{\tau}(\vdash\! 1)$ 
of height $1$ within $\T$, whose end-vertices belong to $\T(n+1)$. Let 
$$
\overline{\!\mathcal{M}}_{\tau} = \max \{ \ro(\BB_{\theta}\,,\BB_{\tau}) : \theta \in \T_{\tau}\}
$$
Conditionally upon $\F_n = \F_n^{\text{BBM}}$, 
the random variables $\;\overline{\!\mathcal{M}}_{\tau}\,$, $\tau \in \T(n)$, are 
independent and (since the exponential distribution is memoryless) have the same 
distribution as the RV $\;\overline{\!\mathcal{M}}$ of Claim 1.
We clearly have 
$$
\mathsf{Max}_t \le \mathsf{Max}_n + \max \{ \,\overline{\!\mathcal{M}}_{\tau} : \tau \in \T(n) \}.
$$
The proof of the theorem will be complete if we can show that 
\begin{equation}\label{eq:maxmax}
\lim_{n \to \infty} \frac{1}{n} \max \{ \,\overline{\!\mathcal{M}}_{\tau} : \tau \in \T(n) \} = 0
\quad \text{almost surely.}
\end{equation}
Let $c > 0$. Then, using Lemma \ref{lem:many-one} and Claim 1,
$$
\begin{aligned}
\Prob\bigl[\max \{ \,\overline{\!\mathcal{M}}_{\tau} : \tau \in \T(n) \} > cn]
&\le \Prob\bigl[\,\overline{\!\mathcal{M}} > cn]\, e^{\lambda n}
\le \, \Prob[\mathcal{M} >cn]\,e^{\lambda(n+1)}\\
&\le \textsl{K}\, 
\exp\biggl( -\frac{(cn-\tfrac{1}{2})^2- 2\lambda(n+1)}{2}\biggr)\,
\end{aligned}
$$
by Proposition \ref{pro:max}.
Summing over all $n$, the resulting series converges for every $c> 0$, and once more, 
the Borel-Cantelli Lemma yields \eqref{eq:maxmax}.
\end{proof}

\begin{theorem}\label{thm:minmax2} In the transient regime $\lambda \le 1/8$,
$$
\frac{\mathsf{Max}_t - r^*t}{\log t} \to - \frac{3}{\sqrt{8\lambda}} \AND 
\frac{\mathsf{Min}_t - r_*t}{\log t} \to  \frac{3}{\sqrt{8\lambda}} 
$$
in probability, 
as $t \to \infty$,\footnote{This statement has been included thanks to a suggestion by a referee.} while 
$$
\limsup_{t \to \infty} \frac{\mathsf{Max}_t - r^*t}{\log t} = - \frac{1}{\sqrt{8\lambda}}
\AND 
\liminf_{t \to \infty} \frac{\mathsf{Max}_t - r^*t}{\log t} = - \frac{3}{\sqrt{8\lambda}}
$$
and 
$$
\limsup_{t \to \infty} \frac{\mathsf{Min}_t - r_*t}{\log t} =  \frac{3}{\sqrt{8\lambda}}
\AND 
\liminf_{t \to \infty} \frac{\mathsf{Min}_t - r_*t}{\log t} =  \frac{1}{\sqrt{8\lambda}}
$$
almost surely.
\end{theorem}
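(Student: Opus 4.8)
The guiding idea is to transfer Proposition \ref{pro:Euc} through the one–dimensional comparison \eqref{eq:im}: along every ray $-\log\Im\BB_\tau^{\HH}=\tfrac12|\tau|+\boldsymbol{\beta}_\tau$ with $(\boldsymbol{\beta}_\tau)$ standard Euclidean BBM. Put $\mathsf{m}_t=\min\{\boldsymbol{\beta}_\tau:\tau\in\T(t)\}$ beside $\mathsf{M}_t=\max\{\boldsymbol{\beta}_\tau:\tau\in\T(t)\}$; the symmetry $(\boldsymbol{\beta}_\tau)\stackrel{d}{=}(-\boldsymbol{\beta}_\tau)$ gives $\mathsf{m}_t\stackrel{d}{=}-\mathsf{M}_t$, so Proposition \ref{pro:Euc} delivers the refined $\log t$–asymptotics of both $\tfrac12t+\mathsf{M}_t$ and $\tfrac12t+\mathsf{m}_t$ with exactly the constants $-1/\sqrt{8\lambda},-3/\sqrt{8\lambda}$ and $3/\sqrt{8\lambda},1/\sqrt{8\lambda}$ of the statement. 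I would therefore establish the sandwiches
\[
\tfrac12t+\mathsf{M}_t\le \mathsf{Max}_t\le \tfrac12t+\mathsf{M}_t+o(\log t),\qquad \tfrac12t+\mathsf{m}_t\le \mathsf{Min}_t\le \tfrac12t+\mathsf{m}_t+o(\log t),
\]
with the $o(\log t)$ holding almost surely (for the $\liminf/\limsup$ claims) and in probability (for the convergence in probability); all six assertions then follow verbatim from Proposition \ref{pro:Euc}.

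The left inequalities are immediate from $\ro_{\HH}(\BB_\tau,o)\ge-\log\Im\BB_\tau^{\HH}$, a consequence of \eqref{eq:rhodiff}. Everything thus rests on the nonnegative correction $g_\tau:=\ro_{\HH}(\BB_\tau,o)+\log\Im\BB_\tau^{\HH}$. Since $r_*\ge0$ in the transient regime every particle eventually has $\Im\BB_\tau^{\HH}\le1$, so \eqref{eq:rhodiff} gives $g_\tau=\log\bigl(1+(\Re\BB_\tau^{\HH})^2\bigr)+O(1)$, and the task is to bound the horizontal coordinate $\Re\BB_\tau^{\HH}$. The useful structure is that in $\HH$ the horizontal motion is driven by a Brownian motion independent of the one driving $\log\Im$, run with the clock $\int_0^t(\Im B_s)^2\,ds$; conditionally on the trajectory of $\Im$ it is centred Gaussian of that variance. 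An extremal particle travels near the line of slope $\pm\sqrt{2\lambda}$, so that $-\log\Im B_s\approx r^{*}s$ (for the maximum) or $\approx r_*s$ (for the minimum); whenever this slope is positive the clock $\int_0^t(\Im B_s)^2\,ds=\int_0^t e^{-2(-\log\Im B_s)}\,ds$ stays bounded, and then $g_\tau=O(1)$.

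For $\mathsf{Max}_t$ this is clean at every $\lambda\le1/8$, because $r^{*}=\tfrac12+\sqrt{2\lambda}>0$. Using Lemma \ref{lem:many-one} I would bound $\E\bigl[\#\{\tau\in\T(t):\ro(\BB_\tau,o)\ge r^{*}t-K\log t,\ |\Re\BB_\tau^{\HH}|\ge t^{\delta}\}\bigr]$: a large horizontal coordinate forces a large clock, i.e.\ an atypically large $\Im$ along the path, which carries a stretched–exponential Gaussian penalty, so the expectation is summable in $t$ (at integer times, then filling the gaps as in the proof of Theorem \ref{thm:minmax1}). Borel–Cantelli yields $g_\tau=o(\log t)$ along all distance–extremal particles almost surely, hence the right inequality for $\mathsf{Max}_t$, and with it all three maximum assertions. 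As a check on the $\limsup$ value, the purely hyperbolic tail from \eqref{eq:heat1} (as in Theorem \ref{thm:minmax1}) gives $\E[\#\{\tau:\ro(\BB_\tau,o)\ge r^{*}t+a\log t\}]\asymp t^{-1/2-\sqrt{2\lambda}\,a}$, whose threshold is exactly $a=-1/\sqrt{8\lambda}$.

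The minimum is clean in the same way as long as $r_*=\tfrac12-\sqrt{2\lambda}>0$, i.e.\ for $\lambda<1/8$: the minimal $\boldsymbol{\beta}$–particle then keeps $-\log\Im B_s\ge r_*s$ along its path, so its clock and hence its $g_\tau$ are $O(1)$, giving $\mathsf{Min}_t\le\tfrac12t+\mathsf{m}_t+O(1)$ directly. The genuinely delicate case is the phase boundary $\lambda=1/8$, where $r_*=0$: the naive minimal trajectory then hugs $-\log\Im\approx0$, i.e.\ $\Im\approx1$, so its clock grows like $t$ and its correction like $\log t$, and the plain comparison is too weak. Here the upper bound becomes an \emph{existence} statement: one must exhibit $\ro$–small particles directly, by a second–moment (many–to–two) argument built on the sharp two–sided heat–kernel asymptotics refining \eqref{eq:heat1}. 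The corresponding first moment $\E[\#\{\tau:\ro(\BB_\tau,o)\le c\log t\}]\asymp(\log t)\,t^{(c-3)/2}$ has threshold exactly $c=3=3/\sqrt{8\lambda}$, confirming that minimising particles do reach the Euclidean level while keeping $g_\tau=o(\log t)$, necessarily along trajectories that do not hug $\Im\approx1$. Controlling this joint behaviour of distance and horizontal coordinate — and upgrading the resulting existence to hold uniformly in $t$ for the almost sure $\limsup$, e.g.\ by a barrier argument along a time–discretisation — is the main obstacle of the proof.
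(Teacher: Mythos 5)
Your skeleton is exactly the paper's: sandwich $\mathsf{Max}_t$ and $\mathsf{Min}_t$ between the vertical quantities $\tfrac12 t+\mathsf{M}_t$, $\tfrac12 t+\mathsf{m}_t$ via \eqref{eq:rhodiff}, invoke Proposition \ref{pro:Euc} with the symmetry $\mathsf{m}_t\stackrel{d}{=}-\mathsf{M}_t$, and reduce everything to a uniform $o(\log t)$ bound on the correction $\log\bigl(1+(\Re\BB_\tau^{\HH})^2\bigr)$. But your load-bearing step fails. The many-to-one expectation you propose to sum is not summable; it does not even stay bounded. The reason is isotropy: the heat kernel depends only on $\ro$, so conditionally on $\ro(B_t,o)=D$ the endpoint is uniformly distributed on the hyperbolic circle of radius $D$ around $o$; transported to $\HH$, the angular variable is a standard Cauchy random variable $x$ on $\partial\HH=\R$, and for $D\approx r^*t$ the endpoint has $\Re B_t\approx x$. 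Hence $\Prob\bigl[|\Re B_t|\ge t^{\delta}\,\big|\,\ro(B_t,o)\ge r^*t-K\log t\bigr]$ is of order $t^{-\delta}$ --- polynomial, not stretched-exponential. Since $e^{\lambda t}\Prob[\ro(B_t,o)\ge r^*t-K\log t]\asymp t^{K\sqrt{2\lambda}}$ (the paper's own computation, with the matching lower bound from \cite[Thm.~3.1]{DM}), your restricted first moment is of order $t^{K\sqrt{2\lambda}-\delta}$; and because the maximiser sits at $r^*t-\frac{3}{\sqrt{8\lambda}}\log t$ you are forced to take $K\sqrt{2\lambda}\ge\tfrac32$, so the expectation diverges for every $\delta<\tfrac32$. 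The flaw in your heuristic is that a trajectory can inflate its clock by an excursion \emph{up} to height $\Im\approx t^{\delta}$: in the first-moment calculus this costs only a polynomial factor, while the horizontal displacement it buys adds $\approx 2\delta\log t$ to the hyperbolic distance, relaxing the vertical requirement and cancelling the cost exactly. A first moment cannot distinguish ``far because low'' from ``far because sideways''.

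What does work --- and is what the paper does --- is to bound a \emph{probability}, not an expectation: a particle that is low ($\Im<h_n$) yet far out ($|\Re|>C\sqrt{\log n}$; only the scale $\sqrt{\log n}$ is needed, since the target is $|\BB_\tau^{\HH}|^2=o(\log t)$) must cross one of three congruent horocycles of parameter $\approx\sqrt{\log n}$, and by the Lalley--Sellke transience estimate (\cite[Prop.~4]{LS}) the probability that the whole BBM \emph{ever} hits such a horocycle is $\lesssim(\log n)^{-(1+\sqrt{1-8\lambda})/4}\to0$. This decay is far too slow for Borel--Cantelli, which is why the paper upgrades it to almost sure convergence via the monotonicity criterion ($X_n\to0$ a.s. if and only if $\sup_{k\ge n}X_k\to0$ in probability) rather than by summation; your ``summable, then Borel--Cantelli'' plan has no chance at the rates actually available. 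Finally, since the resulting correction bound is uniform over \emph{all} particles, it settles maximum and minimum simultaneously for every $\lambda\le1/8$; your separate, uncompleted second-moment/barrier programme for $\mathsf{Min}_t$ at $\lambda=1/8$ is both unnecessary and based on a misdiagnosis: by the $3\log t$ shift in Proposition \ref{pro:Euc}, at criticality all particles eventually have $\Im\BB_\tau^{\HH}\lesssim t^{-3}$, so no minimising trajectory ``hugs $\Im\approx1$'', and once the correction is controlled uniformly there is nothing particle-specific left to check. As written, your proposal's two key steps --- summability of the restricted first moment, and the critical-case minimum --- are respectively false and missing.
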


\begin{proof}
We work with $\HH$ and assume once more that the starting point is $o=\im$.

In the standard Euclidean case, symmetry of ordinary real BM implies that the minimum 
of BBM at time $t$ behaves like minus the maximum. Now recall once more  \eqref{eq:im}. We see
that the proposed behaviour is the one related to maximum and minimum of 
$-\log \Im \BB_{\tau}^{\HH}$. 
By \eqref{eq:rhodiff},
$$
-\log \Im \BB_{\tau}^{\HH} \le \ro_{\HH}(\BB_{\tau}^{\HH},\im) \le -\log \Im \BB_{\tau}^{\HH} 
+ |\BB_{\tau}^{\HH}|^2\,, 
$$
where the latter is the square of the absolute value in $\C$. 
Thus, combined with Proposition \ref{pro:Euc} the following will prove the theorem.
\\[5pt]
\emph{Claim 2.} \hspace{.9cm}
$ 
\dfrac{1}{\log t}\max \bigl\{ |\BB_{\tau}^{\HH}|^2 : \tau \in \T(t) \bigr\} = 0
\quad \text{almost surely, as }\; t \to \infty\,.
$
\\[5pt]
\emph{Proof of Claim 2.}
By \eqref{eq:im} and Proposition \ref{pro:Euc}, 
$$
\limsup_{t \to \infty} 
\frac{1}{\log t}\Bigl(\max\nolimits_{|\tau|=t}\log \Im \BB_{\tau}^{\HH}  + r_*t\Bigr)
= -\frac{1}{\sqrt{8\lambda}}
\quad \text{almost surely.}
$$
Since $r_* \ge 0$, this implies that for any $\ep > 0$,
\begin{equation}\label{eq:Pto1}
 \Prob\Bigl[ \sup\nolimits_{|\tau| \ge t} \Im \BB_{\tau}^{\HH} < e^{-r_*t}\, t^{-(1-\ep)/\sqrt{8\lambda}}
             \Bigr] \to 1 \quad \text{as }\; t \to \infty\,.
\end{equation}
We see that in Claim 2 we only need to consider  $|\Re \BB_{\tau}^{\HH}|^2$. 
We use the criterion that for a sequence of non-negative random variables, $X_n \to 0$ almost surely 
if and only if $\sup \{ X_k : k \ge n \} \to 0$ in probability as $n \to \infty$.
Thus, we work with integer times and take
$$
X_n = \frac{1}{\log n}\sup \{ |\Re \BB_{\tau}^{\HH}|^2 : n \le |\tau| < n+1 \},
$$
so that almost sure convergence to $0$ of $X_n$ implies the same for  
$
\frac{1}{\log t}\max_{|\tau| = t} |\Re \BB_{\tau}^{\HH}|^2. 
$
Now for any $C > 0$, 
$$
\Prob\bigl[ \sup\nolimits_{k \ge n} X_k > C^2 \bigr] \le 
\Prob \bigl[ \sup\nolimits_{|\tau| \ge n} |\Re \BB_{\tau}^{\HH}|^2 > C^2 \log n\bigr].
$$
We show that the latter probability tends to $0$ as $n \to \infty$, which will 
yield Claim 2.
It is 
$$
\begin{aligned}
\le \Prob\biggl[ \sup\nolimits_{|\tau| \ge n} \bigl|\Re \BB_{\tau}^{\HH}\bigr| > C \sqrt{\log n}\,,
\;\;
\sup\nolimits_{|\tau| \ge n} \Im \BB_{\tau}^{\HH} < e^{-r_*n}\, n^{-(1-\ep)/\sqrt{8\lambda}} \biggr]
&\ \\ + \ 
\Prob\biggl[ \sup\nolimits_{|\tau| \ge n} \Im \BB_{\tau}^{\HH} 
             \ge e^{-r_*n}\, n^{-(1-\ep)/\sqrt{8\lambda}}\biggr]&.
\end{aligned}
$$
By \eqref{eq:Pto1}, the second term tends to $0$ as $n \to \infty$.
The first term is bounded above by the probability that at some time $t \ge n$, hyperbolic 
BBM in $\HH$ visits the set 
$$
\begin{aligned}
D_n &= \{ z \in \HH : |\Re z| > R_n \,,\; \Im z < h_n\}\,, \quad \text{where}\\ 
R_n &= C\sqrt{\log n} \AND h_n = e^{-r_*n}\, n^{-(1-\ep)/\sqrt{8\lambda}}.
\end{aligned}
$$
We now borrow an argument from the proof of \cite[Prop. 4]{LS}. 
The mapping $z \mapsto \frac{z-\im}{z+\im}$ from $\HH$ to $\DD$ is the inverse of \eqref{eq:DtoH}.
It maps $D_n$ into the set  
$$
\wt D_n = \left\{ z \in \C : |z| < 1\,,\; \Bigl|z - \frac{h_n}{1+h_n}\Bigr| > \frac{1}{1+h_n}\,,\;
|\sin \arg z| < \frac{2R_n}{R_n^2+1} \right\}, 
$$
a region in the plane close to $1$ which is small for the Euclidean eye, since $R_n$ is large 
and $h_n$ is small. (The image of $D_n$ is slightly smaller than $\wt D_n$ on its ``sides''.)

Now consider the three circles $\mathcal{C}_{-1}\,$, $\mathcal{C}_0 $ and $\mathcal{C}_1\,$
with radius $1\big/\!\sqrt{R_n^2+1}$ which are tangent from inside to the unit circle at the points
$\dfrac{R_n-\im}{R_n+\im}$, $1$ and $\dfrac{R_n+\im}{R_n-\im}$, respectively. (The latter are 
the images of the boundary points $-R_n\,$, $\im\infty$ and $R_n$ of $\HH$.)
When $n$ is sufficiently large, $2h_n/(1+h_n) \le 1\big/\!\sqrt{R_n^2+1}\,$.
Then the union of the these circles (in fact only part thereof) separates the origin from the set 
$\wt D_n$. Thus, all particles of hyperbolic BBM in $\DD$ starting from $0$ which reach $\wt D_n$
must pass through that union. This and rotation invariance of hyperbolic BBM around the starting
point imply that 
$$
\begin{aligned}
\Prob\bigl[ \BB_{\tau}^{\DD} \in \wt D_n \;\text{for some $\tau \in \T$ with }\; |\tau| \ge n\, \bigr] &\le 
\Prob\bigl[ (\BB_{\tau})_{\tau \in \T}  \; \text{ hits }\; 
\mathcal{C}_{-1} \cup \mathcal{C}_{0} \cup \mathcal{C}_1 \bigr] \\
&\le 3 \, \Prob\bigl[ (\BB_{\tau})_{\tau \in \T}  \; \text{ hits }\; \mathcal{C}_{0}\bigr]. 
\end{aligned}
$$
Mapping $\mathcal{C}_0$ back to $\HH$, it becomes the line  
$\{ z \in \HH : \Im z = \sqrt{R_n^2+1} - 1 \}$ (a horocycle with respect to $\im\infty$). 
We get exactly as in the last line of the proof of \cite[Prop. 4]{LS} that
for sufficiently large $n$ 
$$
\Prob\bigl[ \BB_{\tau}^{\HH} \in D_n \; 
\text{ for some $\tau \in \T$ with }\; |\tau| \ge n \bigr]
\le 3\, e^{\frac{1+\sqrt{1-8\lambda}}{2}} 
\left(\!\sqrt{C^2 \log n + 1}  - 1 \right)^{-\frac{1+\sqrt{1-8\lambda}}{2}}\,
$$
which tends to $0$ as $n \to \infty$, as proposed.
\end{proof}

\section{The empirical distributions}\label{sec:empdis}

We can interpret hyperbolic BBM as a Markov process on the space of \emph{populations}
in $\DD$, where a population is a finitely supported measure of the form
$$
\Mbf = \sum_{j=1}^n \delta_{z_j}\,,\quad n \in \N\,, \; z_j \in \DD\; \text{not necessarily distinct.}
$$
(Another interpretation is to see this as a multiset; see \cite{KW}.) 

\begin{dfn}\label{def:empdist}
 Starting with one particle at position $z_0\,$, the \emph{occupation measure} 
 of hyperbolic BBM at time $t\ge 0$ is the population
$$
\Mbf_t = \Mbf^{z_0}_t = \sum_{\tau \in \T: |\tau| = t} \delta_{\BB_{\tau}}
$$
and the associated \emph{empirical distribution} is 
$$
\mu_t = \mu_t^{z_0} = \frac{1}{\nn(t)} \Mbf_t\,.
$$
\end{dfn}

Thus, $\mu_t$ is a finitely supported random probability measure on the disk.
Both $\Mbf_t$ and $\mu_t$ depend on the starting point $z_0\,$, and in our construction, 
with $\gm_{z_0}$ as in \eqref{eq:gammaz0}, $\Mbf^{z_0}_t = g_{z_0}\Mbf_t^o$ is the image 
of $\Mbf_t^o$ under the mapping $z \mapsto \gm_{z_0}z$. (That is, for a measure 
$\mu$ on $\overline \DD$ and an isometry $g$ of $\DD$, we have $g\mu(K) = \mu(g^{-1}K)$.)
We shall often omit the starting point in the notation.

Instead of starting BBM at a single point, we may start with an arbitrary initial population 
$\Mbf_0$.
Then the population at time $t$ is
\begin{equation}\label{eq:Markov}
\Mbf_t = \sum_{j=1}^n \Mbf^{z_j}_t\,,\quad \text{if} \quad \Mbf_0 = \sum_{j=1}^n \delta_{z_j}\,,
\end{equation}
where the occupation measures $\Mbf^{z_j}_t$ are independent even when some of the $z_j$ coincide.

We start with an obvious consequence of the continuity of the
distributions of the random variables $\BB_{\tau}\,$.

\begin{rem}\label{rem:distinct} For distinct $\tau, \tau' \in \T$, one has
$\Prob[\BB_{\tau} =  \BB_{\tau'}]=0$. 

(Note that $\tau$ and $\tau'$ are random, but 
the assumption $\tau \ne \tau'$ is inherently deterministic: for the Yule tree, one may 
without change of the probability space consider first the binary tree as a deterministic 
one-complex, where each edge is an interval of length $1$. Then that interval is dilated 
by multiplication with the respective $\Exp_{\lambda}$ edge length random variable. 
Thus each element of $\T$ corresponds uniquely to an element of the deterministic tree, 
and $\tau \ne \tau'$ if and only if
the corresponding elements of the deterministic tree are distinct.)

In particular, for any fixed $t \ge 0$, with probability $1$ the measure 
$\mu_t$ is equidistribution on $\nn(t)$ distinct points.
\end{rem}

Of course this does not imply that it never happens that $\mu_t$ is other than equidistributed.
However, the following is also quite clear.

\begin{lem}\label{lem:rays} Let $\pi$ and $\pi'$ be two distinct rays in the Yule tree. 
Then, with probability $1$, there is a random $t_0 \ge 0$ such that $\BB_{\tau} \ne \BB_{\tau'}$
for all $\tau \in\pi$ and $\tau' \in \pi'$ with $|\tau|, |\tau'| > t_0\,$.  
\end{lem}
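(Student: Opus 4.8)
The plan is to reduce the statement about two entire rays to the behaviour along a single pair of rays past their confluent, and then to exploit the fact that once two Brownian particles split they evolve as \emph{independent} hyperbolic Brownian motions, both of which escape to the boundary at distinct (indeed almost surely different) limit points. Concretely, let $v = \pi \wedge \pi'$ be the confluent of the two rays, a vertex at some finite distance $|v|$ from $\bal$. For $|\tau| > |v|$ along $\pi$ and $|\tau'| > |v|$ along $\pi'$, the processes $(\BB_\tau)_{\tau \in \pi}$ and $(\BB_{\tau'})_{\tau' \in \pi'}$ agree up to $v$ and thereafter run as two \emph{independent} hyperbolic Brownian motions, as noted in the discussion of confluents following the construction of hyperbolic BBM.

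First I would invoke the almost sure boundary convergence of (non-branching) hyperbolic BM: each of the two rays carries a hyperbolic BM, so $\BB_\tau \to \xi \in \partial\DD$ along $\pi$ and $\BB_{\tau'} \to \xi' \in \partial\DD$ along $\pi'$, almost surely, as the length goes to infinity. The key point is that, conditionally on the common trajectory up to $v$, the two continuations are independent, and the conditional law of each limit point is the (continuous, i.e. atomless) harmonic measure $\nu_{\BB_v}$ on the circle given by the Poisson kernel \eqref{eq:poisson}. Hence, conditionally on $\F$ up to $v$, the pair $(\xi,\xi')$ has a product distribution whose marginals are atomless, so $\Prob[\xi = \xi' \mid \F_v] = 0$; integrating out, $\Prob[\xi = \xi'] = 0$, so almost surely $\xi \ne \xi'$.

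Once $\xi \ne \xi'$ almost surely, I would conclude by a separation argument in the closed disk. Since $\BB_\tau \to \xi$ and $\BB_{\tau'} \to \xi'$ with $\xi \neq \xi'$, choose disjoint open neighbourhoods $U \ni \xi$ and $U' \ni \xi'$ in $\overline\DD$; by convergence there is a random $t_0 \ge 0$ such that $\BB_\tau \in U$ for all $\tau \in \pi$ with $|\tau| > t_0$ and $\BB_{\tau'} \in U'$ for all $\tau' \in \pi'$ with $|\tau'| > t_0$. As $U \cap U' = \emptyset$, this forces $\BB_\tau \ne \BB_{\tau'}$ for all such $\tau,\tau'$, which is exactly the assertion.

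The main obstacle is the step asserting $\Prob[\xi=\xi']=0$: it is tempting but insufficient to cite Remark \ref{rem:distinct}, which only gives $\Prob[\BB_\tau = \BB_{\tau'}] = 0$ at finite times and says nothing about the coincidence of the two \emph{limit} boundary points. The honest argument must use the \emph{independence} of the two continuations past $v$ together with the \emph{atomlessness} of the harmonic measure $\nu_{\BB_v}$, so that the diagonal $\{\xi=\xi'\}$ has product-measure zero. A small technical care is needed because the starting point $\BB_v$ of the continuations is itself random; this is handled cleanly by conditioning on the sigma-algebra generated by the trajectory up to $v$ and using that, given $\BB_v$, the limit law is the atomless measure $\nu_{\BB_v}$, independently for the two branches.
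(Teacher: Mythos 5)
Your proof is correct and follows essentially the same route as the paper: identify the confluent $\pi \wedge \pi'$, use that the two continuations beyond it are independent hyperbolic Brownian motions, invoke the atomlessness (continuity) of the harmonic measure $\nu_{z_0}$ to conclude the two boundary limits differ almost surely, and then separate by neighbourhoods. Your final separation step and the conditioning on the trajectory up to the confluent merely spell out details the paper leaves implicit.
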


\begin{proof}
Let $\vt = \pi \wedge \pi'$. (Compare with the last lines of \S \ref{sec:hypBBM}.) Conditionally
upon the location (value) $\BB_{\vt} = z_0\,$, beyond $\vt$ the two Brownian motions along $\pi$ and $\pi'$
are independent replicas of hyperbolic BM starting at $z_0\,$. Each of them has an a.s. random 
limit $\xi$, resp. $\xi' \in \partial \DD$. Both are distributed according to $\nu_{z_0}\,$; 
see \eqref{eq:poisson} and the preceding lines. Continuity of $\nu_{z_0}$ yields that $\xi \ne \xi'$
almost surely. This implies the statement of the lemma.
\end{proof}

Again, this does not necessarily mean that from some random time onwards, all $\mu_t$ are 
equidistributed. But if we restrict to all $\mu_n\,$, $n \in \N$, then with probability $1$
all of them \emph{are} equidistributed.
For a stronger result than Lemma \ref{lem:rays} concerning branching 
random walks on finitely generated groups,
see {\sc Hutchcroft}~\cite{Hu}. 

\smallskip

In the place of $\mu_t\,$, it will also be useful to use the
discrete random measure 
$$
\Laa_t = \Laa^{z_0}_t = \frac{1}{e^{\lambda t}}\, \Mbf^{z_0}_t\,,
$$
the image of $\Laa^0_t$ under $z \mapsto \gm_{z_0}z$. 
It is not a probability measure, but its expectation  is a deterministic probability 
measure on $\DD$, as the following Lemma shows.

\begin{lem}\label{lem:compact} Let $K \subset \DD$ be compact.
Then the expected number of particles present within $K$ at time $t$ is 
$$
\E_{z_0}\bigl( \Mbf_t(K) \bigr) = e^{\lambda t} \,\Prob_{z_0}[B_t \in K]\,,
$$
where $B_t$ is hyperbolic Brownian motion started at $z_0\,$. Thus,
$$
\E_{z_0}\left( \int_0^{\infty}  \Laa_t(K)\,dt \right) < \infty\,,
$$
and therefore 
$$
\lim_{t \to \infty} \mu_t(K) = 0 \quad \text{almost surely.}
$$
\end{lem}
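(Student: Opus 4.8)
The plan is to deduce the claim from the corresponding statement for the unnormalised measures $\Laa_t$, and then to establish the latter by a Borel--Cantelli argument over unit time windows. First I would write $\mu_t(K) = \bigl(e^{\lambda t}/\nn(t)\bigr)\,\Laa_t(K)$ and invoke Proposition~\ref{pro:mart}: since $\nn(t)\,e^{-\lambda t}\to\ww$ almost surely with $\ww>0$, the prefactor $e^{\lambda t}/\nn(t)$ converges almost surely to $1/\ww\in(0,\infty)$. Hence it suffices to prove that $\Laa_t(K)\to 0$ almost surely, and since every real $t$ lies in some $[n,n+1]$, it is enough to show that $\sup_{t\in[n,n+1]}\Laa_t(K)\to 0$ almost surely as $n\to\infty$ through the integers.

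The pathwise point is that the Yule population never dies, so it is non-decreasing in time along each lineage. Let $A_n$ denote the number of particles $\theta\in\T(n+1)$ whose lineage $s\mapsto\BB_{\theta(s)}$ visits $K$ at some time $s\in[n,n+1]$. Then $\Mbf_t(K)\le A_n$ for every $t\in[n,n+1]$: indeed, distinct particles present in $K$ at a common time have disjoint, non-empty families of descendants at time $n+1$, and each such descendant has a lineage meeting $K$ during the window. Consequently $\sup_{t\in[n,n+1]}\Laa_t(K)\le e^{-\lambda n}A_n$, so it remains to control $e^{-\lambda n}A_n$.

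Applying the Many-to-one Lemma~\ref{lem:many-one} at time $n+1$ with the functional $F(f)=\uno\bigl[\exists\,s\in[n,n+1]:f(s)\in K\bigr]$ gives $\E_{z_0}(A_n)=e^{\lambda(n+1)}\,\Prob_{z_0}\bigl[\exists\,s\in[n,n+1]:B_s\in K\bigr]$, hence $\E_{z_0}\bigl(e^{-\lambda n}A_n\bigr)=e^{\lambda}\,\Prob_{z_0}\bigl[\exists\,s\in[n,n+1]:B_s\in K\bigr]$; the decisive feature is that the branching factor $e^{\lambda(n+1)}$ is cancelled by the normalisation, so no assumption on $\lambda$ will be needed. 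To bound the visiting probability of a single hyperbolic BM, I would write $K\subset\{\ro(\cdot,o)\le R_0\}$ and use $\ro_{\HH}(z,o)\ge-\log\Im z$ together with \eqref{eq:im}: the event $B_s\in K$ forces $\tfrac12 s+\beta_s\le R_0+\textsl{const}$, where the constant depends only on $z_0$. Thus a visit to $K$ during $[n,n+1]$ forces the standard BM $\beta$ to dip below $R_0-\tfrac12 n+\textsl{const}$ before time $n+1$, an event whose probability decays like $e^{-n/8}$ by the reflection principle; therefore $\E_{z_0}(e^{-\lambda n}A_n)\le\textsl{Const}\cdot e^{-n/8}$.

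By Markov's inequality $\Prob\bigl[e^{-\lambda n}A_n>\ep\bigr]\le\ep^{-1}\,\E_{z_0}(e^{-\lambda n}A_n)$ is summable in $n$ for every $\ep>0$, so the Borel--Cantelli Lemma yields $e^{-\lambda n}A_n\to 0$ almost surely, whence $\sup_{t\in[n,n+1]}\Laa_t(K)\to 0$ and therefore $\Laa_t(K)\to 0$ almost surely; combined with the first paragraph this gives $\mu_t(K)\to 0$ almost surely. I expect the main obstacle to be precisely the passage from integrated (in $t$) or expected information to genuine almost-sure convergence of the continuous-parameter quantity: the finiteness $\E_{z_0}\bigl(\int_0^{\infty}\Laa_t(K)\,dt\bigr)<\infty$ established above controls only the time integral, not the pointwise values, so it cannot by itself rule out recurring spikes of $\Laa_t(K)$. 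The role of the window count $A_n$ — made summable by the exact cancellation of $e^{\lambda n}$, and dominating $\Mbf_t(K)$ uniformly over the window via the no-death monotonicity — is exactly to upgrade this to a pathwise supremum bound valid for all $t$, simultaneously in the transient and recurrent regimes.
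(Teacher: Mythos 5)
Your proof is correct, and it takes a genuinely different route at the one step that actually matters. The paper's own proof is soft: the identity follows from the many-to-one lemma with $F(f)=\uno_K\bigl(f(t)\bigr)$; finiteness of $\E_{z_0}\bigl(\int_0^\infty\Laa_t(K)\,dt\bigr)$ follows from transience of hyperbolic BM; and then the paper passes from a.s. finiteness of $\int_0^\infty\Laa_t(K)\,dt$ directly to $\lim_{t\to\infty}\Laa_t(K)=0$ a.s., finally dividing by the martingale limit $\ww>0$ exactly as in your first paragraph. That last inference is precisely the weak point you single out: a nonnegative function with finite time integral need not vanish at infinity, so as written the paper's three-line argument glosses over the passage that your construction supplies. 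Your window count $A_n$ (which dominates $\Mbf_t(K)$ uniformly over $t\in[n,n+1]$ by no-death monotonicity and disjointness of descendant sets), the many-to-one lemma applied to the path functional $\uno\bigl[\exists\,s\in[n,n+1]:f(s)\in K\bigr]$, the comparison $\ro_{\HH}(z,o)\ge-\log\Im z$ with the reflection principle, and Markov plus Borel--Cantelli together give an airtight pathwise statement, even with uniform-over-window control $\sup_{t\in[n,n+1]}\Laa_t(K)\to 0$. The trade-off is that you need a quantitative Gaussian tail (decay like $e^{-n/8}$, i.e.\ quantitative transience coming from the drift $1/2$), where the paper invokes only the qualitative fact $\int_0^\infty\Prob_{z_0}[B_t\in K]\,dt<\infty$; what you buy is a complete proof of the almost sure convergence, including in the recurrent regime. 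One small completeness point: the displayed identity $\E_{z_0}\bigl(\Mbf_t(K)\bigr)=e^{\lambda t}\,\Prob_{z_0}[B_t\in K]$ is part of the claim, and your write-up never states its (one-line) proof --- it is the same many-to-one application, with $F(f)=\uno_K\bigl(f(t)\bigr)$ --- so that sentence should be added.
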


\begin{proof}
The first identity follows once more from Lemma \ref{lem:many-one}. Finiteness of 
$\E_{z_0}\bigl( \int_0^{\infty}  \Laa_t(K)\,dt \bigr)$ follows from transience
of hyperbolic BM. It implies that the random variable $\int_0^{\infty}  \Laa_t(K)\,dt$
is a.s. finite, whence 
$$
\lim_{t \to \infty} \mu_t(K) = \frac{1}{\ww}
\lim_{t \to \infty} \Laa_t(K) = 0 \quad \text{almost surely,}
$$
where $\ww$ is the martingale limit of Proposition \ref{pro:mart}.
\end{proof}

Thus, the mass of the empirical distributions $\mu_t$ ``disappears at infinity''
in the hyperbolic metric, resp. topology. (More precisely, the $\mu_t$ tend to 0 vaguely within
the hyperbolic disk.) In this sense, the empirical distributions ``do not see'' the difference
between the transient and the recurrent regimes: in the latter, each relatively compact 
open $U \subset \DD$ set is visited
infinitely often, but the proportion of particles visiting $U$ is negligible in the limit. 

We can express the first formula of Lemma \ref{lem:compact} in terms of  the transition 
operator \eqref{eq:Pt} of (non-branching) hyperbolic BM: for a measurable set $K \subset \DD$
and starting point $z_0$, 
$$ 
\E_{z_0}\bigl( \Laa_t(K) \bigr) =  P_t(z_0\,,K)\,.
$$ 
Thus, for a measurable function $f: \DD \to \R$
\begin{equation}\label{eq:fLambda}
\E_{z_0}\left( \int_{\DD} f\, d\Mbf_t \right) = e^{\lambda t}\,P_tf(z_0) \AND
\E_{z_0}\left( \int_{\DD} f\, d\Laa_t \right) = P_tf(z_0)\,,
\end{equation}
as long as the involved integral is well defined.
A \emph{harmonic function}
is a $C^2$-function $h$ on $\DD$ which satisfies $\Lap h \equiv 0$. It is well known that
every harmonic function satisfies
$$
P_t h = h \quad \text{for every }\; t > 0.
$$
Now let $\F_t = \F_t^{\text{BBM}}$ be the sigma-algebra generated by the information 
of hyperbolic BBM up to and
including time $t$. (It projects naturally onto $\F_t^{\text{Yule}}$.) 

\begin{pro}\label{pro:martingale}
If $h$ is a bounded or positive harmonic function, then the family of random variables
$$
\int h \, d\Laa_t = e^{-\lambda t}\sum_{\tau \in \T(t)} h(\BB_{\tau})\,,\quad t \ge 0
$$
is a martingale with respect to the filtration $(\F_t)_{t \ge 0}\,$.
\end{pro}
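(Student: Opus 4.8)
The plan is to verify directly the two defining properties of a martingale for the process $\int h\,d\Laa_t = e^{-\lambda t}\sum_{\tau\in\T(t)}h(\BB_\tau)$, namely integrability for each fixed $t$ and the one-step identity $\E\bigl(\int h\,d\Laa_{t+s}\mid\F_t\bigr)=\int h\,d\Laa_t$ for all $s>0$. The only substantive inputs beyond bookkeeping are the branching Markov property of hyperbolic BBM and the harmonicity relation $P_sh=h$; granted these, the conditional expectation collapses after a single application of the expectation formula \eqref{eq:fLambda}.

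For integrability I would treat the two hypotheses separately. If $h$ is bounded, $|h|\le C$, then $\bigl|\int h\,d\Laa_t\bigr|\le C\,e^{-\lambda t}\nn(t)$, and since $\E\bigl(e^{-\lambda t}\nn(t)\bigr)=1$ by Lemma \ref{lem:expected} (equivalently Proposition \ref{pro:mart}), one gets $\E\bigl|\int h\,d\Laa_t\bigr|\le C<\infty$. If instead $h\ge 0$ is positive harmonic, then $\int h\,d\Laa_t\ge 0$ and the second formula in \eqref{eq:fLambda} yields directly $\E_{z_0}\bigl(\int h\,d\Laa_t\bigr)=P_th(z_0)=h(z_0)<\infty$, so the process is integrable with constant expectation $h(z_0)$, exactly as a martingale must have.

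For the martingale identity I fix $t$ and condition on $\F_t$. Almost surely each $\tau\in\T(t)$ is an interior point of some edge of the Yule tree, and by memorylessness of the exponential edge-length variables its residual lifetime is again $\Exp_\lambda$-distributed and independent of $\F_t$; hence, conditionally on $\F_t$, the descendants of the distinct particles $\tau\in\T(t)$ evolve as independent copies of hyperbolic BBM, each issued from the current position $\BB_\tau$. Splitting the population at time $t+s$ according to its ancestor at time $t$ gives
$$
\E\Bigl(\textstyle\sum_{\theta\in\T(t+s)}h(\BB_\theta)\;\Big|\;\F_t\Bigr)
=\sum_{\tau\in\T(t)}\E\Bigl(\textstyle\sum_{\theta}h(\BB_\theta)\;\Big|\;\F_t\Bigr),
$$
where the inner sum runs over the descendants of $\tau$ present at time $t+s$. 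Applying the expectation formula \eqref{eq:fLambda} with starting point $\BB_\tau$ (equivalently, the Many-to-one Lemma \ref{lem:many-one} with $F(f)=h(f(s))$) to the independent sub-BBM rooted at $\BB_\tau$ gives $e^{\lambda s}P_sh(\BB_\tau)=e^{\lambda s}h(\BB_\tau)$ by harmonicity. Multiplying by $e^{-\lambda(t+s)}$ and summing produces $e^{-\lambda t}\sum_{\tau\in\T(t)}h(\BB_\tau)=\int h\,d\Laa_t$, which is the required identity.

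I expect the only delicate step to be the branching Markov property invoked in the third paragraph: one must justify that conditioning on $\F_t$ leaves the future as a superposition of independent hyperbolic BBMs issued from the points $\BB_\tau$, $\tau\in\T(t)$, even though a typical $\tau$ lies in the interior of an edge rather than at a fission vertex. This is precisely where the memorylessness of the exponential lifetimes is used, and it is the one point that is not purely formal. Everything else—the two integrability estimates, the ancestor decomposition, and the substitution $P_sh=h$—is immediate from the results already established.
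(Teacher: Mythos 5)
Your proof is correct and follows essentially the same route as the paper: both arguments reduce the claim to the decomposition of the time-$(t+s)$ population according to its ancestors at time $t$, the expectation identity \eqref{eq:fLambda} (many-to-one), and harmonicity $P_s h = h$. The paper merely packages this computation as the eigenfunction relation $\wt P_s \wt h = e^{\lambda s}\, \wt h$ for the lifted transition semigroup on the space of populations, whereas you verify the conditional-expectation identity directly, making explicit the branching Markov property (via memorylessness of the exponential lifetimes) and the integrability check that the paper leaves implicit.
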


\begin{proof}
For any bounded or positive measurable function $f : \DD \to \C$, consider its lift 
$\wt f$ to the space of populations:
$$
\wt f(\Mbf) = \int_{\DD} f\, d\Mbf = \sum_{z \in \DD} f(z)\, \Mbf(\{z\})\,,
$$
a finite sum. Let $(\wt P_t)_{t\ge 0}$ be the family of transition operators (i.e. the 
transition semigroup) of hyperbolic BBM, that is, of the Markov process $(\Mbf_t)_{t\ge 0}$.
By \eqref{eq:Markov}, for any population $\Mbf_0 = \sum_{j=1}^n \delta_{z_j}$,
$$
\begin{aligned}
 \wt P_t \wt f(\Mbf_0) &= \E_{\Mbf_0}\bigl( \wt f (\Mbf_t) \bigr)
 = \E_{\Mbf_0}  \left( \sum_z f(z)\, \Mbf_t(\{z\}) \right)\\
 &= \E_{\Mbf_0}  \left( \sum_z f(z) \sum_{j=1}^n \Mbf^{z_j}_t(z) \right)
 = \sum_{j=1}^n \Mbf_0(z_j)\, \E_{z_j}\left( \sum_z f(z) \, \Mbf^{z_j}_t(z) \right)\\
 &= \sum_{j=1}^n \Mbf_0(z_j)\,\E_{z_j}\left( \int f \, d\Mbf^{z_j}_t(z) \right)
 = \sum_{j=1}^n \Mbf_0(z_j) \, e^{\lambda t} \, P_tf(z_j)\\
 &=  e^{\lambda t}\, \wt{P_t f}(\Mbf_0)
\end{aligned}
$$
by \eqref{eq:fLambda}. In particular, if $h$ is a bounded or positive harmonic function on $\DD$
then $\wt P_t \wt h = e^{\lambda t}\, \wt h$ for every $t \ge 0$. This yields that 
$\int h \, d\Laa_t = e^{-\lambda t} \, \int h \, d\Mbf_t$ is a martingale.
\end{proof}

\begin{rem}\label{rem:prob-on-meas} It is good to think of $\Mbf_t$, $\mu_t$ and 
$\sigma_t$ as  random elements of $\mathsf{Meas}(\overline{\DD})$, 
the sigma-compact space  of finite Borel measures on the closed disk, or in 
other words, as measurable mappings 
$\bigl(\Omega\,, \A\,,\Prob\bigr) \to \mathsf{Meas}(\overline{\DD})$ for the probability 
space specified at the beginning of \S \ref{sec:hypBBM}. Thus, their distributions belong to the 
the space of probability measures on $\mathsf{Meas}(\overline{\DD})$.
\end{rem}

\begin{theorem}\label{thm:weakconv}
With probability 1, as $t \to \infty$,  the measures $\Laa_t$ converge weakly
to a Borel measure $\Laa_{\infty}$ on $\partial \DD$, and 
the probability measures $\mu_t$ converge weakly to a probability measure 
$\mu_{\infty}$ on $\partial \DD$. With $\ww$ as in Proposition \ref{pro:mart},
we have 
$$
\Laa_{\infty} = \ww\mu_{\infty}\,, \AND \E_{z_0}(\Laa_{\infty}) = \nu_{z_0}\,,
$$
the measure on $\partial\DD$ whose density with respect to the normalised Lebesgue measure
on the circle is the Poisson kernel \eqref{eq:poisson}. 

In particular, for every $\xi \in \partial \DD$, we have 
$\mu_{\infty}(\{\xi\}) = 0$ almost surely. 
\end{theorem}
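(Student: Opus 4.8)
The plan is to test the random measures against harmonic extensions of continuous boundary functions, exploit the martingale structure of Proposition \ref{pro:martingale}, deduce almost sure and $L^1$ convergence of these test integrals by domination against the martingale of Proposition \ref{pro:mart}, and then upgrade this to genuine weak convergence on $\overline{\DD}$ using that mass escapes to the boundary (Lemma \ref{lem:compact}).

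First I would fix $\varphi \in C(\partial\DD)$ and let $h_\varphi$ be its Poisson extension, $h_\varphi(z)=\int_{\partial\DD}\varphi(\xi)\,\Poiss(z,\xi)\,d\xi$. Since the conformal factor in \eqref{eq:hypLap} never vanishes inside $\DD$, hyperbolic- and Euclidean-harmonicity coincide, so $h_\varphi$ is a bounded harmonic function in the sense used here, $\|h_\varphi\|_\infty\le\|\varphi\|_\infty$, and by the classical Poisson theorem $h_\varphi\in C(\overline{\DD})$ with $h_\varphi|_{\partial\DD}=\varphi$. By Proposition \ref{pro:martingale} the process $M_t^\varphi=\int h_\varphi\,d\Laa_t$ is a martingale, and $|M_t^\varphi|\le\|\varphi\|_\infty\,\nn(t)e^{-\lambda t}$, dominated by the uniformly integrable martingale $\nn(t)e^{-\lambda t}$ of Proposition \ref{pro:mart}. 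Hence $M_t^\varphi$ is uniformly integrable and converges almost surely and in $L^1$ to a limit $M_\infty^\varphi$ with $\E_{z_0}(M_\infty^\varphi)=P_th_\varphi(z_0)=h_\varphi(z_0)=\int_{\partial\DD}\varphi\,d\nu_{z_0}$, by \eqref{eq:fLambda} and harmonicity. Choosing a sup-norm dense countable family in the separable space $C(\partial\DD)$ and using the uniform estimate $|M_t^\varphi-M_t^\psi|\le\|\varphi-\psi\|_\infty\,\nn(t)e^{-\lambda t}$ together with $\nn(t)e^{-\lambda t}\to\ww<\infty$, I would obtain off a single null set a well-defined limit $L(\varphi)=\lim_tM_t^\varphi$ for \emph{every} $\varphi\in C(\partial\DD)$. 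The functional $L$ is positive (as $\varphi\ge0$ forces $h_\varphi\ge0$) and bounded by $\|\varphi\|_\infty\ww$, so Riesz representation furnishes a random finite positive measure $\Laa_\infty$ on $\partial\DD$ with $L(\varphi)=\int_{\partial\DD}\varphi\,d\Laa_\infty$ and total mass $L(1)=\lim_t\nn(t)e^{-\lambda t}=\ww$.

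The crux is to pass from these harmonic test integrals to weak convergence of $\Laa_t$ on all of $\overline{\DD}$. Given $f\in C(\overline{\DD})$, I would write $f=h_\varphi+g$ with $\varphi=f|_{\partial\DD}$, so $g\in C(\overline{\DD})$ vanishes on $\partial\DD$. For $\ep>0$ the set $\{|g|\ge\ep\}$ is closed in $\overline{\DD}$ and disjoint from $\partial\DD$, hence a compact subset of $\DD$, so Lemma \ref{lem:compact} gives $\Laa_t(\{|g|\ge\ep\})=(\nn(t)e^{-\lambda t})\,\mu_t(\{|g|\ge\ep\})\to\ww\cdot0=0$, whence $\limsup_t\bigl|\int g\,d\Laa_t\bigr|\le\ep\,\ww$; letting $\ep\to0$ yields $\int g\,d\Laa_t\to0$. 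Since $\int h_\varphi\,d\Laa_t=M_t^\varphi\to\int_{\partial\DD}\varphi\,d\Laa_\infty=\int f\,d\Laa_\infty$ (the last equality because $\Laa_\infty$ lives on $\partial\DD$), I conclude $\int f\,d\Laa_t\to\int f\,d\Laa_\infty$ almost surely, i.e. $\Laa_t\to\Laa_\infty$ weakly on $\overline{\DD}$. I expect this escape-of-mass step to be the main obstacle, as it is what links the purely boundary information carried by harmonic functions to the genuine weak limit of the occupation measures.

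The remaining assertions follow quickly. Dividing by $\nn(t)e^{-\lambda t}\to\ww>0$ gives $\mu_t=\Laa_t/(\nn(t)e^{-\lambda t})\to\Laa_\infty/\ww=:\mu_\infty$ weakly, a probability measure on $\partial\DD$ (total mass $\ww/\ww=1$), and $\Laa_\infty=\ww\mu_\infty$. For the intensity, the $L^1$ convergence above and $\E_{z_0}(M_t^\varphi)=h_\varphi(z_0)=\int_{\partial\DD}\varphi\,d\nu_{z_0}$ give $\int_{\partial\DD}\varphi\,d\bigl(\E_{z_0}\Laa_\infty\bigr)=\E_{z_0}(M_\infty^\varphi)=\int_{\partial\DD}\varphi\,d\nu_{z_0}$ for all $\varphi\in C(\partial\DD)$, so $\E_{z_0}(\Laa_\infty)=\nu_{z_0}$. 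Finally, for fixed $\xi\in\partial\DD$ this intensity identity yields $\E_{z_0}\bigl(\Laa_\infty(\{\xi\})\bigr)=\nu_{z_0}(\{\xi\})=0$, since the Poisson density \eqref{eq:poisson} is continuous and $\nu_{z_0}$ therefore has no atoms; as $\Laa_\infty(\{\xi\})\ge0$, this forces $\Laa_\infty(\{\xi\})=0$ almost surely, and because $\ww>0$ almost surely, $\mu_\infty(\{\xi\})=\Laa_\infty(\{\xi\})/\ww=0$ almost surely.
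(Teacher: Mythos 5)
Your proposal is correct and follows essentially the same route as the paper: test $\Laa_t$ against the bounded harmonic Poisson extension $h_\varphi$ of $\varphi = f|_{\partial\DD}$ (Proposition \ref{pro:martingale} giving martingale convergence), kill the remainder $f - h_\varphi$ using escape of mass from compacta (Lemma \ref{lem:compact} together with Proposition \ref{pro:mart}), and identify the intensity as $\nu_{z_0}$ via \eqref{eq:fLambda}. Your only departures are refinements the paper leaves implicit --- the explicit countable dense family in $C(\partial\DD)$ with Riesz representation to get a single null set, and uniform integrability by domination with $\nn(t)e^{-\lambda t}$ in place of the paper's dominated-convergence step through $P_t f(z_0) \to \E_{z_0}\bigl(f(B_\infty)\bigr)$.
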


\begin{proof}
We use a potential theoretic argument.  Note that the harmonic functions for $\Lap$ are 
the same as the harmonic functions for the Euclidean Laplacian on $\DD$. The
corresponding Dirichlet problem is solvable: given a continuous function $\varphi$ on 
$\partial \DD$, there is a unique harmonic function $h_{\varphi}$ on $\DD$ which provides a
continuous extension of $\varphi$ to the interior of the disk. Indeed, $h_{\varphi}$ is the 
Poisson transform of $\varphi$,
\begin{equation}\label{eq:Poiss}
h_{\varphi}(z) = \int_{\partial\DD} \Poiss(z,\xi)\varphi(\xi)\, d\xi\,.
\end{equation}
Now let $f$ be any continuous function on the closed disk $\overline{\DD}$. Considering
$\Laa_t$ as a measure on $\overline{\DD}$, we have to show that
$\int_{\DD} f\,d \Laa_t$ converges almost surely. Let $\varphi= f|_{\partial \DD}$
be the restriction of $f$ to the unit circle, and let $h_{\varphi}$ be the associated
solution of the Dirichlet problem. This is a bounded harmonic function, so that 
$$
\int_{\DD} h_{\varphi}\,d \Laa_t
$$
converges almost surely by Proposition \ref{pro:martingale}. On the other hand,  
$$
\lim_{|z| \to 1} \bigl(f(z) - h_{\varphi}(z)\bigr) = 0 \quad \text{uniformly in $z$.}
$$
Given $\ep > 0$, take $r \in (0\,,\,1)$ such that $|f(z) - h_{\varphi}(z)| < \ep$
for $|z| \ge r$. Let $K = \{z \in \DD : |z| \le r\}$. By Proposition \ref{pro:mart} and 
Lemma \ref{lem:compact},
$$
\int_{K} (f-h_{\varphi})\,d \mu_t 
= \frac{e^{\lambda t}}{\nn(t)} \int_{K} (f-h_{\varphi})\,d\Laa_t \to 0
\quad \text{almost surely,}
$$
and since $\mu_t$ is a probability measure,
$$
\left|\int_{\DD \setminus K} (f-h_{\varphi})\,d \mu_t \right|
< \ep  \,.
$$
This shows weak convergence. 
The identity 
$\Laa_{\infty} = \ww\mu_{\infty}$ is immediate from 
Proposition \ref{pro:mart}. 
Finally, if $f \in C(\overline \DD)$ then by \eqref{eq:fLambda} and dominated convergence
$$
\begin{aligned}
\E_{z_0}\left( \int_{\DD} f\, d\Laa_{\infty} \right)
&= \lim_{t \to \infty} \E_{z_0}\left( \int_{\DD} f\, d\Laa_t \right)
= \lim_{t \to \infty} P_tf(z_0) = \lim_{t \to \infty} \E_{z_0}\bigl(f(B_t)\bigr) 
= \E_{z_0}\bigl(f(B_{\infty})\bigr)
\\ 
&= \int_{\partial \DD} f\, d\nu_{z_0}\,.
\end{aligned}
$$
In particular, for any $\xi \in \partial \DD$,
$\E_{z_0}\bigl(\Laa_{\infty}(\{\xi\})\bigr) = \nu_{z_0}(\{\xi\})$, so that
$\Laa_{\infty}(\{\xi\}) = 0$ almost surely.
\end{proof}

Note that the latter does not necessarily imply that almost surely, the limit measure carries
no point mass.

The \emph{radial projection} of $\DD$ to the boundary circle $\partial \DD$
is the mapping 
$$
\rad(r e^{\im\psi}) =  e^{\im\psi}\,, \; \text {if }\; 0 < r< 1\,, \AND 
\rad(0) = 1\,.
$$
(The value $\rad(0)$ is of no specific importance and might be chosen arbitrarily.)
We can consider the radial projection of hyperbolic Brownian motion, and the image
of $\mu_t$ under the mapping $\rad$.

\begin{cor}\label{cor:rad}
With probability 1, the probability measures 
$$
\mu_t^{\rad} = \frac{1}{\nn(t)}\sum_{\tau \in \T: |\tau| = t} \delta_{\rad(\BB_{\tau})}
$$
on $\partial \DD$ converge weakly to $\mu_{\infty}\,$, as $t \to \infty\,$. 
\end{cor}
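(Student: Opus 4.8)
The plan is to reduce the claim to testing against continuous functions on the circle and then to transfer the already-established weak convergence $\mu_t \to \mu_{\infty}$ on $\overline{\DD}$ (Theorem \ref{thm:weakconv}) through the radial projection. Fix a continuous $\varphi$ on $\partial\DD$. By the definition of the image measure,
$$
\int_{\partial\DD} \varphi \, d\mu_t^{\rad} = \int_{\DD} \varphi\bigl(\rad(z)\bigr)\, d\mu_t(z),
$$
so everything comes down to showing that this quantity converges almost surely to $\int_{\partial\DD}\varphi\,d\mu_{\infty}$. The only subtlety is that $\varphi\circ\rad$ is \emph{not} continuous at the origin, so Theorem \ref{thm:weakconv} cannot be applied to it directly; the point of the proof is that the mass of $\mu_t$ near the origin is asymptotically negligible, which neutralises this discontinuity.

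Next I would extend $\varphi$ to a continuous function $f$ on $\overline{\DD}$ (for instance its Poisson transform $h_{\varphi}$, or any continuous extension). Since $f$ agrees with $\varphi\circ\rad$ on $\partial\DD$ and is uniformly continuous on the compact set $\overline{\DD}$, and since $|z - \rad(z)| = 1-|z|$, the difference $f(z) - \varphi(\rad(z))$ tends to $0$ uniformly as $|z| \to 1$. Hence, given $\ep > 0$, there is $r < 1$ with $|f(z) - \varphi(\rad(z))| < \ep$ whenever $|z| \ge r$. Writing $K = \{z \in \DD : |z| \le r\}$ and bounding $f - \varphi\circ\rad$ by a constant $M$ on $K$, one gets
$$
\left| \int_{\DD} f\, d\mu_t - \int_{\DD} \varphi\circ\rad \, d\mu_t \right|
\le M\,\mu_t(K) + \ep\,\mu_t(\DD\setminus K) \le M\,\mu_t(K) + \ep.
$$
By Lemma \ref{lem:compact} we have $\mu_t(K) \to 0$ almost surely, so the left-hand side has almost sure $\limsup$ at most $\ep$; as $\ep$ was arbitrary, the two integrals have the same almost sure limit. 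By Theorem \ref{thm:weakconv} that common limit is $\int_{\DD} f\, d\mu_{\infty} = \int_{\partial\DD}\varphi\, d\mu_{\infty}$, the last equality holding because $\mu_{\infty}$ is carried by $\partial\DD$ and $f|_{\partial\DD} = \varphi$.

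Finally, since $\partial\DD$ is a compact metric space, $C(\partial\DD)$ is separable, and weak convergence of probability measures can be tested on a fixed countable dense family $\{\varphi_k\}$. For each $k$ the preceding argument gives a probability-$1$ event on which $\int \varphi_k\, d\mu_t^{\rad} \to \int\varphi_k\, d\mu_{\infty}$; intersecting these countably many events yields a single event of full probability on which $\mu_t^{\rad} \to \mu_{\infty}$ weakly. I expect the only real obstacle to be the discontinuity of $\rad$ at $0$, but this is precisely what Lemma \ref{lem:compact} is tailored to remove, so I anticipate no further difficulty.
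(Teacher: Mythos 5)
Your proof is correct and follows essentially the same route as the paper: extend $\varphi$ continuously to $\overline{\DD}$ (the paper likewise uses the Poisson transform $h_{\varphi}$), apply Theorem \ref{thm:weakconv} to that extension, and neutralise the discontinuity of $\rad$ at the origin via the vanishing of the $\mu_t$-mass on compacts. The only difference is packaging: the paper multiplies $h_{\varphi}-\varphi\circ\rad$ by the cutoff $\min\{2|z|,1\}$ to produce a single continuous function vanishing on $\partial\DD$ and applies Theorem \ref{thm:weakconv} twice, whereas you run an explicit $\ep$-splitting with Lemma \ref{lem:compact}, thereby spelling out the near-origin step that the paper leaves implicit.
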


\begin{proof} Let $\varphi \in C(\DD)$, and let $h_{\varphi}$ be its Poisson transform 
\eqref{eq:Poiss}, providing the continuous extension of $\varphi$ to $\overline{\DD}$
which is harmonic on $\DD$. To ``hide'' the discontinuity of $\rad(\cdot)$ at $0$, let
$$
f(z) = \begin{cases}     
\Bigl( h_{\varphi}(z) - \varphi\bigl(\rad(z)\bigr)\Bigr)\, \min \{2|z|, 1\}\,&\text{if }\; z \in \DD,\\
0\,&\text{if }\; z \in \partial\DD
\end{cases}
$$
Then $f \in C(\overline{\DD})$. We get that with probability 1,
$$
\lim_{t \to \infty} \int_{\overline{\DD}} f\, d\mu_t = 0
$$
On the other hand,  
$$
\lim_{t \to \infty} \int_{\overline{\DD}} h_{\varphi}\, d\mu_t 
=  \int_{\partial\DD} \varphi\, d\mu_{\infty}\,.
$$
This concludes the proof.
\end{proof}

In analogy with Corollary \ref{cor:rad},
we have the following for the real parts of hyperbolic BBM in the upper half 
plane model. We omit the very similar proof.

\begin{cor}\label{cor:realpart}  Let
$$
\mu_t^{\Re} 
= \frac{1}{\nn(t)}\sum_{\tau \in \T: |\tau| = t} \delta_{\Re\BB_{\tau}^{\HH}}.
$$
Then, with probability one, $\mu_t^{\Re}$ converges weakly to  
$\mu_{\infty}^{\HH}\,$, as $t \to \infty\,$. 
\end{cor}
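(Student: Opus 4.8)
The plan is to imitate the proof of Corollary~\ref{cor:rad}, replacing the radial projection by the real-part projection and isolating the single point where the two arguments genuinely differ. I work in $\HH$ and regard its boundary as the circle $\partial\HH=\R\cup\{\infty\}$. Fix $\varphi\in C(\partial\HH)$, that is, a continuous function on $\R$ with $\lim_{u\to+\infty}\varphi(u)=\lim_{u\to-\infty}\varphi(u)=:\varphi(\infty)$, and let $h_\varphi$ be the solution of the Dirichlet problem on $\HH$ with boundary data $\varphi$, namely the transport to $\HH$ of the Poisson transform \eqref{eq:Poiss}. It is a bounded harmonic function extending continuously to $\overline\HH$ with $h_\varphi|_{\partial\HH}=\varphi$. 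By Proposition~\ref{pro:martingale} and Theorem~\ref{thm:weakconv} the integrals $\int h_\varphi\,d\mu_t$ converge almost surely, and since $h_\varphi\in C(\overline\HH)$ while $\mu_\infty^\HH$ is carried by $\partial\HH$, the limit is $\int_{\partial\HH}\varphi\,d\mu_\infty^\HH$.

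Writing $g(z)=h_\varphi(z)-\varphi(\Re z)$ gives the exact identity $\int\varphi\,d\mu_t^\Re=\int h_\varphi\,d\mu_t-\int g\,d\mu_t$, so the whole statement reduces to $\int g\,d\mu_t\to0$ almost surely. The function $g$ is bounded by $2\|\varphi\|_\infty$ and is continuous at every finite boundary point: as $z\to x_0\in\R$ both $h_\varphi(z)$ and $\varphi(\Re z)$ tend to $\varphi(x_0)$, so $g(z)\to0$ there. Its only discontinuity is at $\infty$, since approaching $\infty$ along a vertical ray (bounded $\Re z$ and $\Im z\to\infty$) one has $h_\varphi(z)\to\varphi(\infty)$ while $\varphi(\Re z)$ remains near the value of $\varphi$ at a finite point. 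In the disk picture this is exactly the fact that the real-part projection, pulled back to $\DD$, is continuous on $\overline\DD$ except at the boundary point $1$ corresponding to $\infty$.

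Here lies the one difference with Corollary~\ref{cor:rad}. There the projection was discontinuous at the interior centre $o$, whose $\mu_t$-mass is killed by Lemma~\ref{lem:compact}; now the bad point $\infty$ lies on the boundary, so its neighbourhoods are not compact and Lemma~\ref{lem:compact} is unavailable (indeed, in the recurrent regime some particles do climb to arbitrarily large $\Im$, although only a vanishing proportion of them). The remedy is that $\infty$ is $\mu_\infty^\HH$-negligible: Theorem~\ref{thm:weakconv}, applied to the atom at $1\in\partial\DD$, gives $\mu_\infty^\HH(\{\infty\})=0$ almost surely. Thus, on an event of full probability, $\mu_t\Rightarrow\mu_\infty^\HH$ weakly on the compact space $\overline\HH$ and $g$ is bounded with discontinuity set contained in the $\mu_\infty^\HH$-null set $\{\infty\}$; the \emph{extended mapping theorem} then gives $\int g\,d\mu_t\to\int g\,d\mu_\infty^\HH$, and the latter integral is $0$ because $g\equiv0$ on $\partial\HH\setminus\{\infty\}$.

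The main obstacle is therefore precisely this accumulation of mass towards $\infty$, and the one new ingredient beyond Corollary~\ref{cor:rad} is the atomlessness $\mu_\infty^\HH(\{\infty\})=0$. If one prefers to stay within the cutoff technique of Corollary~\ref{cor:rad} rather than invoke the mapping theorem, one takes $f_\delta=g\cdot\chi_\delta$ with a continuous cutoff $\chi_\delta$ that vanishes on the horoball $\{\Im z\ge1/\delta\}$ and equals $1$ off a slightly larger horoball. Then $f_\delta\in C(\overline\HH)$ vanishes on $\partial\HH$, so $\int f_\delta\,d\mu_t\to0$ as $t\to\infty$, while the remainder is bounded by $2\|\varphi\|_\infty\,\mu_t(\{\Im z\ge1/\delta\})$; by the Portmanteau theorem $\limsup_t$ of this quantity is at most the $\mu_\infty^\HH$-mass of a shrinking neighbourhood of $\infty$, which tends to $0$ as $\delta\to0$. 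Either route yields $\int\varphi\,d\mu_t^\Re\to\int_{\partial\HH}\varphi\,d\mu_\infty^\HH$ for all $\varphi\in C(\partial\HH)$, that is, $\mu_t^\Re\Rightarrow\mu_\infty^\HH$ almost surely.
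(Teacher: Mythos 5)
Your proof is correct and is essentially the argument the paper has in mind: the paper omits the proof of this corollary as ``very similar'' to that of Corollary~\ref{cor:rad}, and your write-up is exactly that adaptation --- harmonic extension of the boundary test function plus a device hiding the discontinuity of the projection, evaluated along the almost-sure weak convergence of Theorem~\ref{thm:weakconv}. Your identification of the one genuine difference (the bad point $\im\infty$ now lies on the boundary, so Lemma~\ref{lem:compact} is unavailable and must be replaced by the almost-sure atomlessness $\mu_{\infty}^{\HH}(\{\im\infty\})=0$ furnished by the last statement of Theorem~\ref{thm:weakconv}) is precisely the detail the paper leaves to the reader.
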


Another feature of the ``disappearance'' of the population at infinity 
is the average rate of escape.

\begin{dfn}\label{def:distdist}
 The \emph{(empirical) distance distribution} of hyperbolic BBM at time $t\ge 0$ is
the finitely supported random measure $\mu_t^{\ro}$ on $\R_+$ which is the image of 
$\mu_t$ under the mapping $\DD \ni z \mapsto \ro(z,o)$, that is,
$$
\mu_t^{\ro} = \frac{1}{\nn(t)}\sum_{\tau \in \T: |\tau| = t} \delta_{\ro(\BB_{\tau},o)}.
$$
\end{dfn}

\begin{theorem}\label{thm:normal}
With probability one, $\mu_t^{\ro}$ is asymptotically normal with  mean
$t/2$ and variance $t$. That is, its distribution function satisfies 
$$
\mu_t^{\ro}\bigl(-\infty\,,\tfrac12 t + x \sqrt{t}\,\bigr] 
\to \frac{1}{\sqrt{2\pi}} \int_{-\infty}^x  e^{-s^2/2}\, ds  \quad \text{for every $x \in \R$,}
$$
as $t \to \infty\,$. 
\end{theorem}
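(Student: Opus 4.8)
The plan is to run everything through the one-dimensional Euclidean comparison process and the total-mass martingale limit $\ww$ of Proposition \ref{pro:mart}. Working in $\HH$ with start $o=\im$, recall from \eqref{eq:im} that the family $\bigl(-\log\Im\BB_\tau^\HH\bigr)_{\tau\in\T}$ is Euclidean BBM with drift $1/2$, so its empirical distribution is centred at $\tfrac12 t$ with scale $\sqrt t$. Since \eqref{eq:rhodiff} gives $0 \le \ro_\HH(\BB_\tau^\HH,o) + \log\Im\BB_\tau^\HH \le |\BB_\tau^\HH|^2$, the distance distribution $\mu_t^{\ro}$ is a perturbation of the empirical distribution of the values $-\log\Im\BB_\tau^\HH$. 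I would therefore split the proof into two parts: (i) the empirical CLT for these Euclidean values, and (ii) the statement that the discrepancy above is $o(\sqrt t)$ for all but a $\mu_t$-negligible proportion of particles.

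For (i) it is convenient to work with the unnormalised measures $\Laa_t = e^{-\lambda t}\Mbf_t$. Writing $I_{t,x}=(-\infty,\tfrac12 t + x\sqrt t\,]$ and letting $G_t(x)$ be the $\Laa_t$-mass of those particles whose value $-\log\Im\BB_\tau^\HH$ lies in $I_{t,x}$, the Many-to-one Lemma \ref{lem:many-one} gives exactly $\E\bigl(G_t(x)\bigr)=\Prob[\,-\log\Im B_t^\HH \le \tfrac12 t + x\sqrt t\,]$, which converges to $\Phi(x)$ (the standard normal c.d.f.) by \eqref{eq:normal}, equivalently Lemma \ref{lem:speed}. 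To upgrade this first moment to $G_t(x)\to \ww\,\Phi(x)$ I would carry out a second-moment (``many-to-two'') computation in the spirit of Lemma \ref{lem:many-one}: the diagonal contributes $e^{-\lambda t}\E(\cdots)\to 0$, while for a pair with confluent at time $u$ the two lineages run independently for time $t-u$; after the $e^{-2\lambda t}$ normalisation the pair weight carries an integrable factor $e^{-\lambda u}$, so by dominated convergence $\E\bigl(G_t(x)^2\bigr)\to \Phi(x)^2\,\E(\ww^2)$ (the constant being forced by taking the full line, where $G_t$ becomes $e^{-\lambda t}\nn(t)\to\ww$). Together with the analogous $\E\bigl(G_t(x)\,\ww\bigr)\to \Phi(x)\,\E(\ww^2)$ this yields $G_t(x)\to \ww\,\Phi(x)$ in $L^2$, hence in probability; a Borel--Cantelli argument along a suitable subsequence of times, together with monotonicity in $x$ and continuity of $\Phi$, promotes this to almost sure convergence simultaneously in $x$. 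Dividing by $\nn(t)\,e^{-\lambda t}\to\ww$ (Proposition \ref{pro:mart}) gives the empirical CLT for the values $-\log\Im\BB_\tau^\HH$.

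For (ii) I would show that the proportion of particles with $|\BB_\tau^\HH|^2 > \sqrt t$ is asymptotically negligible. Since $\Im\BB_\tau^\HH$ is typically exponentially small, only $(\Re\BB_\tau^\HH)^2$ matters, and by Lemma \ref{lem:many-one} the expected $\Laa_t$-mass of $\{|\Re\BB_\tau^\HH|> t^{1/4}\}$ equals $\Prob[\,|\Re B_t^\HH|> t^{1/4}\,]$. As noted in the proof of Lemma \ref{lem:speed}, $B_t^\HH$ converges almost surely to a finite real limit $B_\infty^\HH$, so $\Re B_t^\HH$ is tight and this probability tends to $0$. After dividing by $\nn(t)\,e^{-\lambda t}$ and upgrading from expectation as in part (i), all but a vanishing $\mu_t$-proportion of particles satisfy $\ro_\HH(\BB_\tau^\HH,o)+\log\Im\BB_\tau^\HH = o(\sqrt t)$, so the distance c.d.f. and the c.d.f. of the centred and rescaled values $-\log\Im\BB_\tau^\HH$ share the same limit $\Phi$. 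This is precisely the asserted asymptotic normality with mean $\tfrac12 t$ and variance $t$.

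The main obstacle is the almost-sure upgrade of a law of large numbers for the \emph{moving} target sets $I_{t,x}$: because the indicator of a half-line $\{-\log\Im\, \le \tfrac12 t + x\sqrt t\,\}$ is not harmonic, the clean martingale of Proposition \ref{pro:martingale} is unavailable, and one is forced into the many-to-two second-moment estimate and the delicate subsequence-plus-gap-filling passage from $L^2$ to almost sure convergence. A secondary technical point is that the transfer in (ii) asks for control on the real part in \emph{all} regimes rather than only the transient one treated in Theorem \ref{thm:minmax2}; fortunately the mere tightness of the single-particle variable $\Re B_t^\HH$, which is all that the $\mu_t$-proportion requires, follows directly from the almost sure boundary convergence of (non-branching) hyperbolic BM.
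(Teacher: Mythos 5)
Your top-level architecture coincides with the paper's: pass to the vertical process $-\log\Im\BB_\tau^{\HH}$, which by \eqref{eq:im} is Euclidean BBM with drift $1/2$, prove an empirical CLT for it, and then show the discrepancy $\ro_{\HH}(z,o)+\log\Im z$ is negligible at scale $\sqrt t$. The difference lies in how the two ingredients are obtained, and this is where the gaps are. For the vertical empirical CLT the paper proves nothing: it invokes Proposition \ref{pro:ney}, i.e.\ the results of Ney, Kaplan--Asmussen and Biggins, noting explicitly that the almost sure version is exactly Kaplan--Asmussen/Biggins. You propose to re-derive it. Your $L^2$ computation (many-to-one for the mean, many-to-two for $\E(G_t(x)^2)$ and $\E(G_t(x)\ww)$, with $\E(\ww^2)=2$ for the Yule process) is correct in outline. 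But the almost sure upgrade, which you compress into ``Borel--Cantelli along a suitable subsequence plus gap-filling,'' is precisely the hard content of the cited theorems, and your sketch does not reach it: the many-to-two computation gives a variance decaying only polynomially (of order $t^{-1/2}$, coming from the $\sqrt{u/t}$ error in the two-particle probability), so Chebyshev is not summable along arithmetic times; along sparser times $t_n$ the gaps $t_{n+1}-t_n$ grow without bound, and controlling the empirical c.d.f.\ uniformly over a growing gap --- during which the population multiplies by an unbounded factor and the target $\tfrac12 t + x\sqrt t$ moves --- is a genuinely nontrivial argument (it is what forces Kaplan--Asmussen's conditioning structure, or Biggins' uniform martingale convergence) that you neither carry out nor reduce to a quotable statement. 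The same objection applies verbatim to the second ``upgrading from expectation as in part (i)'' in your step (ii), which concerns a different moving target.

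There is also a concrete quantitative slip in step (ii): from the inequality you quote, $0 \le \ro_{\HH}(z,o)+\log\Im z \le |z|^2$, keeping the particles with $|\Re \BB_\tau^{\HH}| \le t^{1/4}$ only bounds the discrepancy by $\sqrt t$, which is $O(\sqrt t)$ and not $o(\sqrt t)$, so the transfer fails at exactly the CLT scale; you need the sharper consequence of \eqref{eq:rhodiff}, namely that the discrepancy is at most $\log(1+|z|^2)$, under which your truncation gives $O(\log t)$. More importantly, the paper avoids every moving-target upgrade in this step by using what it has already established: by Theorem \ref{thm:weakconv}, almost surely $\mu_t^{\HH}$ converges weakly and the limit measure charges no single boundary point, in particular not $\im\infty$; hence for any $\ep>0$ there is a fixed (random) $M$ with $\limsup_t \mu_t^{\HH}(\overline\HH\setminus U_M)\le\ep$ for $U_M=\{z:|z|\le M,\,\Im z\le 1\}$, and on this fixed set the discrepancy is bounded by the constant $\log(1+M^2)$, which trivially vanishes after dividing by $\sqrt t$. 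If you keep your step (i) as a citation of Proposition \ref{pro:ney} and replace your step (ii) by this compactness argument, your proof becomes the paper's; as written, the two almost-sure upgrades are genuine gaps.
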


Before the proof, we make a detour to the upper half plane model. Recall once more 
from \eqref{eq:im} that the vertical projection  
$$
\BB_{\tau}^{\text{vert}} = -\log \Im \BB_{\tau}^{\HH}\,, \quad \tau \in \T, 
$$
is one-dimensional branching Brownian motion on $\R$ where the 
underlying Euclidean Brownian motion at time $t$ has drift $\frac12 t$
and variance $t$.

For the following, see {\sc Ney}~\cite[Thm. 2]{Ney}, {\sc Kaplan and Asmussen}~\cite[Thm. 3]{KA}
as well as {\sc Biggins}~\cite{Bi2}.

\begin{pro}\label{pro:ney} Let
$$
\mu_t^{\text{vert}} 
= \frac{1}{\nn(t)}\sum_{\tau \in \T: |\tau| = t} \delta_{\BB_{\tau}^{\text{vert}}}.
$$
With probability 1, as $t\to \infty$, the family of discrete probability distributions
$\mu_t^{\text{vert}}$  on $\R$ is asymptotically normal with  mean
$t/2$ and variance $t$. 
\end{pro}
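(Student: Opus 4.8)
The plan is to prove the sharper statement that for each fixed $x \in \R$ the unnormalised count
$$
Z_t(x) = \#\bigl\{ \tau \in \T(t) : \BB_{\tau}^{\text{vert}} \le \tfrac12 t + x\sqrt t \,\bigr\}
$$
satisfies $e^{-\lambda t} Z_t(x) \to \ww\,\Phi(x)$ almost surely, where $\Phi$ is the standard normal distribution function and $\ww$ is the martingale limit of Proposition \ref{pro:mart}. Dividing by $\nn(t)\,e^{-\lambda t} \to \ww > 0$ then gives $\mu_t^{\text{vert}}(-\infty\,,\tfrac12 t + x\sqrt t\,] \to \Phi(x)$; since $\Phi$ is continuous and the prelimit distribution functions are monotone in $x$, almost-sure convergence at each point of a countable dense set of $x$ upgrades to the uniform distribution-function statement by a P\'olya-type argument.

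First I would compute the first moment. Since $\BB_t^{\text{vert}} = \tfrac12 t + \beta_t$ with $\beta_t \sim N(0,t)$ by \eqref{eq:im}, the event $\{\BB_t^{\text{vert}} \le \tfrac12 t + x\sqrt t\}$ has probability exactly $\Phi(x)$; the many-to-one Lemma \ref{lem:many-one}, applied to the functional of the path given by the terminal vertical coordinate, then yields $\E\bigl(Z_t(x)\bigr) = e^{\lambda t}\Phi(x)$, so that $\E\bigl(e^{-\lambda t} Z_t(x)\bigr) = \Phi(x)$ for every $t$.

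The main work is the variance control, for which I would condition on $\F_s$ for a fixed large $s < t$. By the branching property $Z_t(x) = \sum_{\sigma \in \T(s)} Z^{(\sigma)}_{t-s}$, where the contributions $Z^{(\sigma)}_{t-s}$ of the subtrees rooted at the particles $\sigma \in \T(s)$ are conditionally independent given $\F_s$. Each conditional mean equals $e^{\lambda(t-s)}\,\Phi\!\bigl((\tfrac12 s + x\sqrt t - \BB_{\sigma}^{\text{vert}})/\sqrt{t-s}\bigr)$, whose argument tends to $x$ as $t \to \infty$ with $s$ and $\BB_\sigma^{\text{vert}}$ fixed, so that $\E\bigl(e^{-\lambda t}Z_t(x) \mid \F_s\bigr) \to \Phi(x)\,e^{-\lambda s}\nn(s)$. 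For the conditional variance I would use the crude uniform bound $\mathrm{Var}\bigl(Z^{(\sigma)}_{t-s}\mid\F_s\bigr) \le \E\bigl(\nn(t-s)^2\bigr) \le 2\,e^{2\lambda(t-s)}$, which follows from the geometric law of Lemma \ref{lem:expected}. Summing the conditionally independent contributions,
$$
\mathrm{Var}\bigl(e^{-\lambda t}Z_t(x)\mid\F_s\bigr) = e^{-2\lambda t}\!\sum_{\sigma\in\T(s)}\!\mathrm{Var}\bigl(Z^{(\sigma)}_{t-s}\mid\F_s\bigr) \le 2\,e^{-2\lambda s}\nn(s) = 2\,e^{-\lambda s}\bigl(e^{-\lambda s}\nn(s)\bigr),
$$
which tends to $0$ as $s \to \infty$ since $e^{-\lambda s}\nn(s) \to \ww$; taking expectations, the unconditional bound is $2\,e^{-\lambda s}$. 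Combining the conditional mean with Chebyshev's inequality, and letting first $t \to \infty$ and then $s \to \infty$, yields $e^{-\lambda t}Z_t(x) \to \ww\Phi(x)$ in probability.

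The hard part is upgrading convergence in probability to the almost-sure, uniform form, which is exactly what the classical references handle. Here I would pass to a lattice of times $t = n\delta$, apply the variance bound above with a slowly growing $s = s(n) \to \infty$ together with the Borel--Cantelli lemma to obtain almost-sure convergence along the lattice, and then fill the gaps $t \in (n\delta\,,\,(n+1)\delta)$ and recover uniformity in $x$ via the monotonicity in $x$ and the continuity of $\Phi$. Controlling the between-lattice fluctuations of $Z_t(x)$ is the genuinely delicate point, and is where I would lean on the estimates of {\sc Ney}~\cite{Ney}, {\sc Kaplan and Asmussen}~\cite{KA} and {\sc Biggins}~\cite{Bi2}, the drift $\tfrac12$ entering only through the deterministic recentring by $\tfrac12 t$.
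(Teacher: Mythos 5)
The paper does not actually prove this proposition: it is quoted as a known result for one--dimensional Euclidean BBM (the vertical process $\BB^{\text{vert}}_\tau$ is real BBM with drift $\tfrac12$ by \eqref{eq:im}), with the proof delegated to {\sc Ney}~\cite{Ney} (mean-square, hence in-probability convergence), {\sc Kaplan and Asmussen}~\cite{KA} and {\sc Biggins}~\cite{Bi2} (almost sure convergence). Your proposal, by contrast, reconstructs the argument: the many-to-one computation $\E\bigl(Z_t(x)\bigr)=e^{\lambda t}\Phi(x)$ is correct, and your conditional second-moment bound given $\F_s$ — conditional independence of the subtree counts by the branching property and memorylessness of the exponential clocks, $\mathrm{Var}\bigl(Z^{(\sigma)}_{t-s}\mid\F_s\bigr)\le \E\bigl(\nn(t-s)^2\bigr)\le 2e^{2\lambda(t-s)}$ from the geometric law, hence $\mathrm{Var}\bigl(e^{-\lambda t}Z_t(x)\mid\F_s\bigr)\le 2e^{-2\lambda s}\nn(s)$ — is exactly right and yields, via Chebyshev and $t\to\infty$ then $s\to\infty$, the convergence $e^{-\lambda t}Z_t(x)\to\ww\,\Phi(x)$ in probability; this is in essence Ney's mean-square argument. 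Dividing by $e^{-\lambda t}\nn(t)\to\ww>0$ and invoking monotonicity in $x$ plus continuity of $\Phi$ is also a correct way to get the distribution-function statement. What your route buys is a self-contained proof of the in-probability version from ingredients already in the paper (Lemmas \ref{lem:expected}, \ref{lem:many-one}, Proposition \ref{pro:mart}); what the paper's citation buys is precisely the part you do not complete.

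That incomplete part is the almost-sure statement, which is what the proposition asserts. Your lattice/Borel--Cantelli sketch is the right strategy, but two points need genuine work rather than a pointer to the references: (i) with $s(n)\to\infty$, the a.s. convergence of the conditional mean requires $\max_{\sigma\in\T(s(n))}|\BB^{\text{vert}}_\sigma| = o(\sqrt{n\delta}\,)$ uniformly, which does hold (the maximum grows linearly in $s(n)=O(\log n)$ by Proposition \ref{pro:Euc}) but must be said; (ii) filling the gaps $t\in(n\delta,(n+1)\delta)$ requires an oscillation estimate for all particles over a time window of length $\delta$ — the analogue of Claim 1 and \eqref{eq:maxmax} in the proof of Theorem \ref{thm:minmax1} — giving a maximal displacement of order $\delta\sqrt{2\lambda n}$, i.e.\ an error $O(\sqrt{\delta}\,)$ after the $\sqrt{t}$ normalisation, removed by letting $\delta\downarrow 0$ at the end. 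Since you ultimately lean on the same three references the paper cites for exactly this step, your proposal should be read as a correct and more explicit proof of the in-probability statement together with an honest reduction of the a.s.\ statement to the literature — which is no weaker than, and more informative than, what the paper itself does.
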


\cite{Ney} only has convergence in mean square, whence in probability. \cite{KA} has almost sure
convergence when the drift of the base Brownian motion is $0$ with obvious extension to non-zero drift;
see also the last section of \cite{Bi2}.

\begin{proof}[Proof of Theorem \ref{thm:weakconv}]
We work with the upper half plane model and indicate this by the superscript $\HH$. 
Let $\Omega_0$ be the event on which $\mu_t^{\HH}$ converges weakly. 
On $\Omega_0\,$, the  measure $\mu_{\infty}^{\HH}$
assigns mass $0$ to the boundary point $\im \infty$, so that it lives on the 
lower boundary line $\R$. 
Let $f \in C^{\infty}(\R)$ be uniformly continuous and set
$$
f_t(x) = f \biggl( \frac{x - \frac12 t}{\sqrt{t}}\biggr).
$$
By the Portmanteau Theorem, we need to show that on $\Omega_0\,$,
$$
\lim_{t \to \infty} \int_{\R} f_t(x) \, d\mu_t^{\ro}(x) 
= \int_{\R} \frac{1}{\sqrt{2\pi}} e^{-x^2/2} f(x)\, dx\,.
$$
Given $\ep > 0$ there is a 
random bound $M < \infty$ such that 
$$
\mu_{\infty}^{\HH}\bigl([-M\,,\,M]\bigr) \ge 1 - \ep\,.
$$
Consider the closed set $U_M = \{z \in \overline\HH : |z| \le M\,,\; \Im z \le 1 \}$,
where $\overline{\HH} = \HH \cup \R \cup \{\im\infty\}$ is the geometric
compactification of the hyperbolic upper half plane.
Then 
\begin{equation}\label{eq:leep}
\limsup_{t \to \infty} \mu_t^{\HH}( \overline \HH \setminus U_M) \le \ep\,.
\end{equation}
We decompose
$$
\int_{\R} f_t(x) \, d\mu_t^{\ro}(x) 
= \int_{\overline{\HH}} f_t\bigl(\ro_{\HH}(z,o)\bigr)\, d\mu_t^{\HH}(z) 
$$
into $\int_{U_M}+\int_{\overline \HH \setminus U_M}$. 
Regarding the second part, we have by \eqref{eq:leep}
$$
\limsup_{t \to \infty} \left|\int_{\overline{\HH} \setminus U_M} 
           f_t\bigl(\ro_{\HH}(z,o)\bigr)\, d\mu_t^{\HH}(z)\right| 
\le \ep\,\|f\|_{\infty} \,.
$$
We apply the same decomposition to 
$$
\int_{\R} f_t(x) \, d\mu_t^{\text{vert}}(x) 
= \int_{\overline{\HH}} f_t\bigl(-\log \Im z\bigr)\, d\mu_t^{\HH}(z) 
$$
and get
$$
\limsup_{t \to \infty} \left|\int_{\overline{\HH} \setminus U_M} 
           f_t\bigl(-\log \Im z\bigr)\, d\mu_t^{\HH}(z)\right| 
\le \ep\,\|f\|_{\infty} \,.
$$
We now consider the difference of the integrals over $U_M\,$, recalling that for 
$\Im z \le 1$ one has $\ro_{\HH}(\im \Im z,o) = - \log \Im z\,$:
$$
\begin{aligned}
&\left| \int_{U_M} 
           f_t\bigl(\ro_{\HH}(z,o)\bigr)\, d\mu_t^{\HH}(z)
- \int_{U_M} f_t\bigl(-\log \Im z\bigr)\, d\mu_t^{\HH}(z) \right|\\ 
&\hspace*{4.5cm}\le 
\int_{U_M} \Bigl|f_t\bigl(\ro_{\HH}(z,o)\bigr) 
   - f_t\bigl(\ro_{\HH}(\im \Im z,o)\bigr) \Bigr|\, d\mu_t^{\HH}(z)\,.
\end{aligned}
$$   
By \eqref{eq:rhodiff}, we have $|\ro_{\HH}(z,o)-\ro_{\HH}(\im \Im z,o)| \le \log(1+M^2)$ on $U_M\,$.
Therefore 
$$
\left|\frac{\ro_{\HH}(z,o)-\frac12 t}{\sqrt{t}} 
- \frac{\ro_{\HH}(\im \Im z,o)-\frac12 t}{\sqrt{t}}\right| \le \frac{\log(1+M^2)}{\sqrt{t}}\,, 
$$
and by uniform continuity of $f$, 
$$
\lim_{t \to \infty} \Bigl|f_t\bigl(\ro_{\HH}(z,o)\bigr) 
   - f_t\bigl(\ro_{\HH}(\im \Im z,o)\bigr) \Bigr| = 0 \quad \text{uniformly on }\; U_M\,.
$$
We infer that on $\Omega_0$
$$
\lim_{t \to \infty} \int_{\R} f_t(x) \, d\mu_t^{\ro}(x) =
\lim_{t \to \infty} \int_{\R} f_t(x) \, d\mu_t^{\text{vert}}(x)\,,
$$
and Proposition \ref{pro:ney} yields the proposed asymptotic behaviour.
\end{proof}

The \emph{average displacement} of hyperbolic BBM is
$$ 
\int_{\DD} \ro(z,o)\,d\mu_t(z) 
= \frac{1}{\nn(t)}\sum_{\tau \in \T: |\tau| = t} \ro(\BB_{\tau},o)
$$ 

\begin{theorem}\label{thm:escape} With probability 1, the average displacement of hyperbolic BBM
has rate of escape
$$
\lim_{t \to \infty} \frac{1}{t}\int_{\DD} \ro(z,o)\,d\mu_t(z) = \frac12\,. 
$$
\end{theorem}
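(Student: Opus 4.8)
The plan is to read the average displacement as the first moment of the empirical distance distribution $\mu_t^{\ro}$, and to extract its rate from the concentration already encoded in Theorem \ref{thm:normal}, taking care that weak convergence controls \emph{proportions} of particles but not, a priori, their first moment. Writing $m_t = \frac1t\int_{\DD}\ro(z,o)\,d\mu_t(z)$, I would establish $\liminf_t m_t \ge \tfrac12$ and $\limsup_t m_t \le \tfrac12$ separately, working on the almost-sure event where both the asymptotic normality of Theorem \ref{thm:normal} and the maximal-distance rate of Theorem \ref{thm:minmax1} hold.

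First I would record the concentration consequence of Theorem \ref{thm:normal}: for every fixed $\delta>0$, almost surely $\mu_t^{\ro}\bigl([(\tfrac12-\delta)t,\infty)\bigr)\to 1$ and $\mu_t^{\ro}\bigl(((\tfrac12+\delta)t,\infty)\bigr)\to 0$. These are not quite immediate from the stated convergence at the scaled points $\tfrac12 t + x\sqrt t$, since the relevant thresholds formally correspond to $x=\pm\delta\sqrt t\to\pm\infty$. The remedy is that for any fixed $x$ and all large $t$ one has $\tfrac12 t + x\sqrt t \le (\tfrac12+\delta)t$ and $\tfrac12 t + x\sqrt t \ge (\tfrac12-\delta)t$; monotonicity of distribution functions then sandwiches the thresholds, and letting $x\to+\infty$ (resp. $x\to-\infty$) in $\Phi(x)$, the limiting standard normal distribution function, gives the two claimed limits.

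For the lower bound, since $\ro(\cdot,o)\ge 0$ I would simply estimate
$$
m_t \ge \tfrac1t\!\int_{[(\frac12-\delta)t,\infty)}\!\!\ro(z,o)\,d\mu_t(z)
\ge (\tfrac12-\delta)\,\mu_t^{\ro}\bigl([(\tfrac12-\delta)t,\infty)\bigr)\longrightarrow \tfrac12-\delta,
$$
so that $\liminf_t m_t\ge\tfrac12-\delta$, and letting $\delta\downarrow 0$ yields $\liminf_t m_t\ge\tfrac12$. The upper bound is the delicate part, and is where the maximal-distance asymptotics enter. Splitting $m_t$ at the threshold $(\tfrac12+\delta)t$, the bulk integral is at most $\tfrac12+\delta$ because $\mu_t$ is a probability measure, while the tail integral is the genuine obstacle: weak convergence alone cannot exclude that a vanishing fraction of particles sitting far out contributes a non-negligible amount to the mean. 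To control it I would invoke Theorem \ref{thm:minmax1}, giving $\mathsf{Max}_t/t\to r^*$, hence $\mathsf{Max}_t\le (r^*+\ep)t$ eventually; on that event every particle satisfies $\ro(\BB_\tau,o)/t\le r^*+\ep$, so
$$
\tfrac1t\!\int_{((\frac12+\delta)t,\infty)}\!\!\ro(z,o)\,d\mu_t(z)
\le (r^*+\ep)\,\mu_t^{\ro}\bigl(((\tfrac12+\delta)t,\infty)\bigr)\longrightarrow 0
$$
by the concentration recorded above. Combining the two pieces gives $\limsup_t m_t\le\tfrac12+\delta$, and $\delta\downarrow 0$ yields $\limsup_t m_t\le\tfrac12$. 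Intersecting the full-measure events from Theorems \ref{thm:normal} and \ref{thm:minmax1} makes everything hold simultaneously, which completes the proof. The single point requiring care — and the main obstacle — is precisely this: the linear-in-$t$ a priori bound on \emph{all} distances is what converts the vanishing tail proportion into a vanishing tail contribution to the average, upgrading the distributional statement of Theorem \ref{thm:normal} to convergence of the first moment.
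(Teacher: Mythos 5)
Your proof is correct and follows essentially the same route as the paper: both work on the intersection of the full-measure events from Theorem \ref{thm:normal} and Theorem \ref{thm:minmax1}, use the asymptotic normality for concentration of $\mu_t^{\ro}$ around $t/2$, and use the linear bound $\mathsf{Max}_t/t \to r^*$ to keep the tail from contributing to the mean. The only difference is presentational: the paper packages the final step as a quenched application of dominated (bounded) convergence for the random variables $X_t/t$, whereas you unpack that same step into an explicit truncation at $(\tfrac12\pm\delta)t$.
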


\begin{proof}
In the sample space $\Omega$ of hyperbolic BBM, let $\Omega_1$ be the set on which
$\mu_t^{\ro}$ is asymptotically normal as in Theorem \ref{thm:normal}, and 
$$
\frac{\mathsf{Max}_t}{t} \to r^*
$$
as in Theorem \ref{thm:minmax1}, where
$
\mathsf{Max}_t = \max \bigl\{ \ro(\BB_{\tau},o) : \tau \in \T\,,\; |\tau| = t \bigr\}. 
$ 
We can consider this as providing a random environment for the 
family of probability distributions $\mu_t^{\ro}$, and consider one quenched
case, i.e., a fixed $\omega \in \Omega_1\,$. We can think of the associated
deterministic family $\mu_t^{\ro}$, $t > 0$, as the distributions of a 
family of random variables $X_t$. By asymptotic normality, 
$$
\lim_{t \to \infty} \frac{X_t}{t} = \frac{1}{2}\quad \text{in probability.} 
$$
On the other hand, 
$$
\frac{|X_t|}{t} \le \frac{\mathsf{Max}_t}{t}\,, 
$$
which is bounded. We can apply
the Dominated Convergence Theorem, and denoting by $\E_{\omega}$ the
quenched expectation, we get 
$$
\lim_{t \to \infty} \frac{\E_{\omega}(X_t)}{t} = \frac{1}{2}.
$$
Now 
$$
\E_{\omega}(X_t) = \int_{\R} x \,d\mu_t^{\ro}(x)
= \int_{\DD} \ro(z,o)\,d\mu_t(z)\,,
$$
and this proves the claim.
\end{proof}

\section{The support of the random limit distributions}\label{sec:properties}

Here, we shall collect a few properties on the support of the measures $\mu_{\infty}^{z_0}\,$ plus related 
open questions. Before that, we start with a general fact with a simple proof, communicated
to the author by V.~A.~Kaimanovich during the work on the paper \cite{KW}. The following 
statement is tailored to the present needs.

\begin{pro}\label{pro:nonatomic}
Let $\mathcal{X}$ be a separable compact metric space and $\mathsf{Meas}(\mathcal{X})$ 
be the space of finite Borel measures on $\mathcal{X}$. With the weak*topology, it is separable 
and sigma-compact. Now let $\sigma$ and $\sigma'$ be independent random measures in 
$\mathsf{Meas}(\mathcal{X})$,
with distributions $\boldsymbol{\nu}$ and $\boldsymbol{\nu}'$, respectively. 
The latter are probability measures on $\mathsf{Meas}(\mathcal{X})$, and the distribution of
$(\sigma,\sigma')$ on $\mathcal{X}^2$ is $\boldsymbol{\nu} \otimes \boldsymbol{\nu}'$.

Suppose that the expectation  of $\sigma$,  
that is, the deterministic measure on $\mathcal{X}$ given by
$$
\overline{\sigma}(K) = \int_{\mathsf{Meas}(\mathcal{X})} \sigma(K)\,d\boldsymbol{\nu}(\sigma) 
\quad  (K \subset \mathcal{X}\; \text{compact}) 
$$
on $\mathcal{X}$ is a purely non-atomic measure in $\mathsf{Meas}(\mathcal{X})$. Then, almost surely, 
$\sigma'$ and $\sigma$ share no atoms. 
(``Almost surely'' refers to the probability measure $\boldsymbol{\nu} \otimes \boldsymbol{\nu}'$.)
\end{pro}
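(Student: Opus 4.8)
The plan is to reduce the statement to the vanishing of a single nonnegative expectation, the vanishing being forced by non-atomicity of $\overline{\sigma}$. Recall that the atom set of any finite Borel measure is countable, so ``$\sigma$ and $\sigma'$ share an atom'' means precisely that there is some $x \in \mathcal{X}$ with $\sigma(\{x\}) > 0$ and $\sigma'(\{x\}) > 0$ at once. The clean device to detect this is the diagonal $\Delta = \{(x,x) : x \in \mathcal{X}\} \subset \mathcal{X} \times \mathcal{X}$, which is closed, hence Borel, because $\mathcal{X}$ is metric. By Tonelli, the product measure $\sigma \otimes \sigma'$ on $\mathcal{X}^2$ satisfies
$$
(\sigma \otimes \sigma')(\Delta) = \int_{\mathcal{X}} \sigma(\{y\})\, d\sigma'(y) = \sum_{x} \sigma(\{x\})\,\sigma'(\{x\}),
$$
the sum effectively running over the common atoms. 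Thus $(\sigma\otimes\sigma')(\Delta) > 0$ if and only if $\sigma$ and $\sigma'$ share an atom, and it suffices to show that this nonnegative random variable is almost surely $0$.

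First I would record that independence makes the expected product measure factorise: for Borel rectangles $K \times K'$,
$$
\E\bigl((\sigma\otimes\sigma')(K \times K')\bigr) = \E\bigl(\sigma(K)\,\sigma'(K')\bigr)
= \E\bigl(\sigma(K)\bigr)\,\E\bigl(\sigma'(K')\bigr) = \overline{\sigma}(K)\,\overline{\sigma'}(K'),
$$
using independence in the middle and the definition of the expected measures at the end. Since rectangles form a $\pi$-system generating the product Borel $\sigma$-algebra and the measures are finite, the uniqueness theorem for measures upgrades this to $\E(\sigma\otimes\sigma') = \overline{\sigma}\otimes\overline{\sigma'}$ on all Borel subsets of $\mathcal{X}^2$, in particular on $\Delta$.

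Applying this to $\Delta$ and invoking the hypothesis, I would then conclude
$$
\E\bigl((\sigma\otimes\sigma')(\Delta)\bigr) = (\overline{\sigma}\otimes\overline{\sigma'})(\Delta)
= \int_{\mathcal{X}} \overline{\sigma}(\{y\})\, d\overline{\sigma'}(y) = 0,
$$
because $\overline{\sigma}$ is purely non-atomic, so $\overline{\sigma}(\{y\}) = 0$ for every $y$. A nonnegative random variable with zero mean is almost surely $0$, hence $(\sigma\otimes\sigma')(\Delta) = 0$, i.e. $\sigma$ and $\sigma'$ share no atoms, $\boldsymbol{\nu}\otimes\boldsymbol{\nu}'$-almost surely.

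The entire probabilistic content is the one-line vanishing in the last display; the only point needing care, and the main potential obstacle, is the measurability bookkeeping that makes the expectations above meaningful, namely that $(\sigma,\sigma')\mapsto(\sigma\otimes\sigma')(B)$ is a genuine random variable for Borel $B$ (and in particular for $\Delta$). I would settle this with a Dynkin argument: the family of Borel $B \subseteq \mathcal{X}^2$ for which this map is measurable is a $\lambda$-system (finiteness of the measures lets complements and monotone limits behave) and contains the rectangles, on which the map equals $\sigma(K)\,\sigma'(K')$, measurable since the evaluations $\mu\mapsto\mu(K)$ are measurable on $\mathsf{Meas}(\mathcal{X})$; hence it contains all Borel sets. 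A self-contained alternative that avoids the expected product measure is to condition on $\sigma'$: for a fixed realisation of $\sigma'$ its atom set is countable, each such atom is $\boldsymbol{\nu}$-almost surely \emph{not} an atom of $\sigma$ (again because $\E(\sigma(\{x\})) = \overline{\sigma}(\{x\}) = 0$ forces $\sigma(\{x\}) = 0$), and countable subadditivity together with Fubini yields the claim.
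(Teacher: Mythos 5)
Your proof is correct, but your primary route genuinely differs from the paper's. You detect shared atoms through the diagonal $\Delta \subset \mathcal{X}^2$ and reduce everything to the single identity $\E\bigl((\sigma\otimes\sigma')(\Delta)\bigr) = (\overline{\sigma}\otimes\overline{\sigma'})(\Delta) = 0$, using independence to factorise the expected product measure. The paper instead argues pointwise and conditionally: for each fixed $x$, non-atomicity of $\overline{\sigma}$ forces $\sigma(\{x\})=0$ for $\boldsymbol{\nu}$-almost every $\sigma$; then, for a fixed realisation of $\sigma'$, its at most countably many atoms are handled by countable subadditivity; finally one integrates over $\boldsymbol{\nu}'$. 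That is precisely the ``self-contained alternative'' you sketch in your last sentences, so your fallback coincides with the paper's proof, while your main argument is a genuine repackaging. The diagonal approach is slicker --- one expectation computation --- but it costs you the measurability bookkeeping you rightly flag, plus one unstated hypothesis: the uniqueness/$\pi$--$\lambda$ step identifying $\E(\sigma\otimes\sigma')$ with $\overline{\sigma}\otimes\overline{\sigma'}$ needs $\overline{\sigma'}$ to be sigma-finite, which the proposition never assumes ($\E\bigl(\sigma'(\mathcal{X})\bigr)$ may be infinite even though $\sigma'(\mathcal{X})<\infty$ almost surely, e.g.\ $\sigma' = Z\delta_{x_0}$ with $Z\ge 0$ of infinite mean). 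The paper's conditioning argument needs nothing about $\overline{\sigma'}$ at all, which is why the proposition can be stated without such a hypothesis; if you want to keep the diagonal formulation, either add that finiteness assumption or compute $\E\bigl((\sigma\otimes\sigma')(\Delta)\bigr)$ directly by conditioning on $\sigma'$ and applying Tonelli --- which again collapses into the paper's argument.
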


\begin{proof}
\emph{Step 1.} Since $\overline{\sigma}$ is non-atomic, for any $x \in \mathcal{X}$
$$
\sigma(x) = 0 \quad \text{for }\; \boldsymbol{\nu} \text{\,-\,almost every }\; 
\sigma \in \mathsf{Meas}(\mathcal{X}). 
$$
\emph{Step 2.} Let $\sigma' \in \mathsf{Meas}(\mathcal{X})$ be fixed (deterministic). Then 
$\boldsymbol{\nu}\text{\,-\,almost every }\; \sigma \in \mathsf{Meas}(\mathcal{X})$ has no 
common atoms with $\sigma'$, that is,
$$
\int_{\mathsf{Meas}(\mathcal{X})} \sigma(x)\sigma'(x)\, d\boldsymbol{\nu}(\sigma) = 0 \quad 
\text{for all }\; x \in \mathcal{X}\,.
$$
\\[3pt]
Indeed, let $x_i\,$, $i \in I$ be the finitely or countably many atoms of $\sigma'$ (if any).
Then the statement of Step 1 holds for each $x_i$.
\\[3pt]
\emph{Conclusion.} For every probability
measure $\boldsymbol{\nu}'$ on $\mathcal{M}(\mathcal{X})$, we can apply Step 2 to get
$$
\int\!\!\!\!\int_{\mathsf{Meas}(\mathcal{X}) \times \mathsf{Meas}(\mathcal{X})} \sigma(x)\sigma'(x)\, 
d\boldsymbol{\nu}(\sigma)\, d\boldsymbol{\nu}'(\sigma') = 0 
\quad \text{for all }\; x \in \mathcal{X}\,,
$$
which is the proposed result.
\end{proof}

For any vertex $\ut \in \{\lt,\rt\}^*$, consider the subtree $\T_{\ut}$ as defined 
in \S \ref{sec:yule}. Its distribution is the same as the one of $\T$.
Recall that $\ut'$ has the role of the ancestor 
$\bal$ in $\T_{\ut}\,$. For the associated martingale according to Proposition 
\ref{pro:mart}, we write
\begin{equation}\label{eq:Wu}
W_{\ut} = \lim_{t \to \infty} \nn_{\ut}(t) e^{-\lambda t}
\end{equation}
for its almost surely existing, finite and positive limit. If $\BB_{\ut'} = z_{\ut'} = z$ then 
$\bigl(g_z^{-1}\BB_{\tau}\bigr)_{\tau \in \T_{\ut}}$ is hyperbolic BBM starting at $o$, 
whose pieces along the edges come from the same construction on our probability space
as for the process starting with the ancestor $\bal$. We have $g_z = \Gb_{\ut'}$ as defined
by \eqref{eq:Gv}. (Simple transitivity
of the group $\Aff$ is useful here, since in this way we can avoid the need to handle
uncountably many different versions.) 
In particular, all the 
empirical distributions of $\bigl(g_z^{-1}\BB_{\tau}\bigr)_{\tau \in \T_{\ut}}$ converge almost
surely, and by continuity of the action of $g_z$,  we have almost surely all the limits
\begin{equation}\label{eq:muu}
\mu_{\ut, \infty}^{z} 
= \lim_{t \to \infty} \mu_{\ut, t}^{z}\,, \quad \text{where }\; z=z_{\ut'} = \BB_{\ut'}\;\text{ and }\; 
\mu_{\ut, t}^{z} 
= \frac{1}{\nn_{\ut}(t)} \sum_{\tau \in \T_{\ut}\,:\, |\tau| = t+ |u^-|} 
\delta_{\BB_{\tau}}
\end{equation}
for every $z$ and
for each of the countably many vertices $\ut$ of $\T$. Along with 
$\mu_{\ut, t}^{z}$ and $\mu_{\ut, \infty}^{z}\,$, we also have
$$
\Laa_{\ut, t}^{z} = \nn_{\ut}(t)e^{-\lambda t}\,\mu_{\ut, t}^{z} \AND 
\Laa_{\ut, \infty}^{z} =  W_{\ut}\,\mu_{\ut, \infty}^{z}
$$
with $W_{\ut}$ as in \eqref{eq:Wu}.

\begin{lem}\label{lem:indep}
Let $\ut, \vt \in  \{\lt,\rt\}^*$ be such that none of the two is an ancestor (predecessor) 
of the other.
Consider hyperbolic BBM indexed by the time trees $\T_{\ut}$ and $\T_{\vt}\,$, respectively, and starting at $o$. Then with probability $1$, the limit distributions 
$\mu_{\ut, \infty}^{o}$ and $\mu_{\vt, \infty}^{o}$ of \eqref{eq:muu} share no atoms.
\end{lem}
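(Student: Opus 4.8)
The plan is to deduce the statement from the general Proposition \ref{pro:nonatomic}, applied with $\mathcal X = \overline\DD$ (equivalently, its boundary circle). The first reduction I would make is to pass from the probability measures $\mu_{\ut,\infty}^o$ and $\mu_{\vt,\infty}^o$ to the measures $\Laa_{\ut,\infty}^o = W_{\ut}\,\mu_{\ut,\infty}^o$ and $\Laa_{\vt,\infty}^o = W_{\vt}\,\mu_{\vt,\infty}^o$. Since $W_{\ut}$ and $W_{\vt}$ are almost surely finite and strictly positive by Proposition \ref{pro:mart}, the measures $\mu_{\ut,\infty}^o$ and $\Laa_{\ut,\infty}^o$ have exactly the same atoms, and likewise for $\vt$. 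Hence it suffices to show that $\Laa_{\ut,\infty}^o$ and $\Laa_{\vt,\infty}^o$ share no atoms, and for the $\Laa$-measures we have the linear expectation formula of Theorem \ref{thm:weakconv} at our disposal.

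Second, I would establish that $\Laa_{\ut,\infty}^o$ and $\Laa_{\vt,\infty}^o$ are \emph{independent} random measures. Because neither of $\ut,\vt$ is an ancestor of the other, the set $\{\ut,\vt\}$ is prefix-free, so by the construction in \S\ref{sec:yule} the subtrees $\T_{\ut}$ and $\T_{\vt}$ are i.i.d., each distributed as $\T$. More is true: reading off the construction of \S\ref{sec:hypBBM}, the hyperbolic BBM on $\T_{\ut}$ restarted at $o$ is a measurable function of only those edge lengths $\ell_{\cdot}$ and Brownian motions $B^{\cdot}$ whose labels lie in the coset $\ut\{\lt,\rt\}^*$, while the analogous process on $\T_{\vt}$ depends only on labels in $\vt\{\lt,\rt\}^*$. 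Prefix-freeness is exactly the statement that these two index sets are disjoint, so the two processes, and hence their limit empirical distributions $\Laa_{\ut,\infty}^o$ and $\Laa_{\vt,\infty}^o$, are independent. Each of them has the distribution of $\Laa_{\infty}^o$.

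Finally I would verify the hypothesis of Proposition \ref{pro:nonatomic}. The expectation $\overline\sigma = \E_o\bigl(\Laa_{\infty}^o\bigr)$ equals $\nu_o$ by Theorem \ref{thm:weakconv}, and since the starting point is $o=0$ the Poisson kernel \eqref{eq:poisson} is constant, so $\nu_o$ is the equidistribution on $\partial\DD$, which is non-atomic. Taking $\sigma = \Laa_{\ut,\infty}^o$ and $\sigma' = \Laa_{\vt,\infty}^o$ in Proposition \ref{pro:nonatomic} then yields that, almost surely, $\sigma$ and $\sigma'$ have no common atom; by the first reduction the same holds for $\mu_{\ut,\infty}^o$ and $\mu_{\vt,\infty}^o$.

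The only genuinely delicate point I anticipate is the independence step: one must check carefully that the restarted process on each subtree is a function solely of the driving variables whose labels lie in the corresponding coset $\ut\{\lt,\rt\}^*$, respectively $\vt\{\lt,\rt\}^*$. This uses simple transitivity of $\Aff$ to rewrite the process on $\T_{\ut}$ as $\bigl(g_z^{-1}\BB_{\tau}\bigr)$ with $z = \BB_{\ut'}$, started from $o$, exactly as already exploited around \eqref{eq:muu}; once this is in place, the disjointness of the two label sets is immediate from prefix-freeness. The remaining steps are direct applications of Proposition \ref{pro:nonatomic} and Theorem \ref{thm:weakconv}.
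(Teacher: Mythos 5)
Your proposal is correct and follows essentially the same route as the paper: pass to $\Laa_{\ut,\infty}^{o}$ and $\Laa_{\vt,\infty}^{o}$, use independence together with the fact (from Theorem \ref{thm:weakconv}) that their expectation $\nu_o$ is non-atomic, apply Proposition \ref{pro:nonatomic}, and transfer back to the $\mu$'s via the a.s. finiteness and positivity of $W_{\ut}, W_{\vt}$. The only difference is that you spell out the independence step (disjointness of the driving variables indexed by the cosets $\ut\{\lt,\rt\}^*$ and $\vt\{\lt,\rt\}^*$), which the paper merely asserts in one sentence, relying on the construction recalled before the lemma.
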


\begin{proof} The two random measures $\Laa_{\ut, \infty}^{o}$ and $\Laa_{\vt, \infty}^{o}$ 
are independent. 
Now we know from Theorem \ref{thm:weakconv} that the expectation
of the random measure $\Laa_{\ut, \infty}^{o}$ is $\nu_o\,$,
the limit distribution on $\DD$ of hyperbolic Brownian motion starting at $o\,$.
This is normalised Lebesgue measure, whence it has no atoms. Proposition 
\ref{pro:nonatomic} yields that with probability $1$, the measures 
$\Laa_{\ut, \infty}^{o}$ and $\Laa_{\vt, \infty}^{o}$ share no atoms.
Since $W_{\ut}$ and $W_{\vt}$ are almost surely finite and positive, 
and 
$$
\mu_{\ut, \infty}^{o} = \frac{1}{W_{\ut}} \Laa_{\ut, \infty}^{o} 
\AND 
\mu_{\vt, \infty}^{o} = \frac{1}{W_{\vt}} \Laa_{\vt, \infty}^{o},
$$
the lemma follows.
\end{proof}

We can deduce the following.

\begin{theorem}\label{thm:atoms}
With probability $1$, the support of the random limit distribution 
$\mu_{\infty}^{o} = \mu_{\bep, \infty}^{o}$
is infinite.
\end{theorem}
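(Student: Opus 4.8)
The plan is to run a self-similar (Galton--Watson--fixed-point) argument on the cardinality $N = |\operatorname{supp}\mu_\infty^o| \in \{1,2,\dots,\infty\}$, the decisive elementary observation being that a measure of \emph{finite} support is automatically purely atomic, which is exactly what makes the ``no shared atoms'' conclusion of Lemma \ref{lem:indep} count support points. I want to show $\Prob[N=\infty]=1$; note that $N \ge 1$, since $\mu_\infty^o$ is a probability measure by Theorem \ref{thm:weakconv}.

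First I would split off the first fission. For $t > \ell_\bep$ every particle of $\T(t)$ descends through exactly one of the two children $\lt,\rt$, both of which sit at $z_\bep := \BB_\bep$ at the moment of fission; letting $t \to \infty$ and using \eqref{eq:muu}, \eqref{eq:Wu} and Theorem \ref{thm:weakconv} gives
\[
\mu_\infty^o = \frac{e^{-\lambda \ell_\bep}}{\ww}\bigl(W_{\lt}\,\mu_{\lt,\infty}^{z_\bep} + W_{\rt}\,\mu_{\rt,\infty}^{z_\bep}\bigr),
\]
with $\ww, W_{\lt}, W_{\rt} > 0$ almost surely. Since both summands are pushed forward by the \emph{same} isometry $\Gb_\bep$ (namely $\mu_{\lt,\infty}^{z_\bep} = \Gb_\bep\,\mu_{\lt,\infty}^o$ and likewise for $\rt$), they share an atom if and only if the $o$-started intrinsic measures $\mu_{\lt,\infty}^o, \mu_{\rt,\infty}^o$ do; by Lemma \ref{lem:indep} this has probability $0$. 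As all coefficients are positive, $\operatorname{supp}\mu_\infty^o = \operatorname{supp}\mu_{\lt,\infty}^{z_\bep} \cup \operatorname{supp}\mu_{\rt,\infty}^{z_\bep}$, and a point is an atom of $\mu_\infty^o$ precisely when it is an atom of at least one of the two children.

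Next I would set $N_{\lt} = |\operatorname{supp}\mu_{\lt,\infty}^{z_\bep}|$ and $N_{\rt} = |\operatorname{supp}\mu_{\rt,\infty}^{z_\bep}|$. Because $\Gb_\bep$ is a homeomorphism of $\ol\DD$, these equal $|\operatorname{supp}\mu_{\lt,\infty}^o|$ and $|\operatorname{supp}\mu_{\rt,\infty}^o|$; as $\mu_{\lt,\infty}^o$ and $\mu_{\rt,\infty}^o$ are independent and each distributed as $\mu_\infty^o$, the variables $N_{\lt}, N_{\rt}$ are i.i.d.\ copies of $N$. On the event $\{N<\infty\}$ the measure $\mu_\infty^o$ is purely atomic, hence so are the dominated children, and since they share no atoms the union above is disjoint, giving $N = N_{\lt} + N_{\rt}$; conversely $N_{\lt}, N_{\rt} < \infty$ forces $N \le N_{\lt} + N_{\rt} < \infty$. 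Thus $\{N<\infty\} = \{N_{\lt}<\infty\}\cap\{N_{\rt}<\infty\}$, and writing $p = \Prob[N<\infty]$ independence gives $p = p^2$, so $p \in \{0,1\}$. If $p = 1$, then $N = N_{\lt}+N_{\rt}$ almost surely with $N_{\lt}, N_{\rt} \ge 1$ distributed as $N$; bootstrapping $N \ge 2^k$ for every $k$ forces $N = \infty$ almost surely, contradicting $p = 1$. Hence $p = 0$, that is, $N = \infty$ almost surely.

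I expect the main obstacle to be the bookkeeping that legitimises the recursion: checking that the common isometry $\Gb_\bep$ lets Lemma \ref{lem:indep} apply to the genuine (not merely $o$-started) children, that $N_{\lt}, N_{\rt}$ are honestly i.i.d.\ copies of $N$ (homeomorphism-invariance of the support cardinality together with the distributional self-similarity of the subtrees), and above all the identity $N = N_{\lt}+N_{\rt}$ on $\{N<\infty\}$, which is the point where finiteness of the support is converted into pure atomicity and the no-shared-atoms property is invoked to make the two children's supports genuinely disjoint.
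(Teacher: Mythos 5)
Your proof is correct, and it rests on the same two pillars as the paper's own argument: the decomposition of $\mu_{\infty}^o$ at a fission point into a positive convex combination of the two children's limit measures (the paper's identity $\gb_{\ut}^{-1}\mu_{\ut,\infty}^o = \cb_{\ut\lt}\,\mu_{\ut\lt,\infty}^{o} + \cb_{\ut\rt}\,\mu_{\ut\rt,\infty}^{o}$, i.e.\ \eqref{eq:meassum} with $n=1$), and Lemma \ref{lem:indep}, which converts ``finite support $\Rightarrow$ purely atomic'' into genuine disjointness of the children's supports. Where you diverge is the endgame. The paper iterates the decomposition \emph{pathwise} to depth $n$: on a single almost sure event, the atom sets of the $2^n$ level-$n$ measures $\Gb_{\vt'}\mu_{\vt,\infty}^o$, $\vt \in \{\lt,\rt\}^n$, are pairwise disjoint, so if $\mathsf{supp}(\mu_{\bep,\infty}^o)$ were finite, then for $2^n$ exceeding its cardinality some level-$n$ measure would have no atoms at all -- contradicting that each such measure inherits finite support and hence must consist of atoms. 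You instead stay at depth one and argue \emph{distributionally}: the support cardinalities $N_{\lt}, N_{\rt}$ of the two subtrees are i.i.d.\ copies of $N$, the event identity $\{N<\infty\} = \{N_{\lt}<\infty\}\cap\{N_{\rt}<\infty\}$ gives $p=p^2$, and the a.s.\ identity $N = N_{\lt}+N_{\rt}$ on $\{N<\infty\}$ rules out $p=1$ by the doubling bootstrap $\Prob[N \ge 2^k]=1$ for all $k$. What your route buys is that you avoid the multi-level bookkeeping -- the composed isometries $\Gb_{\vt'}$ and the countable family of disjointness events over all pairs of level-$n$ vertices -- which is the tersest part of the paper's proof; the price is that you must justify the i.i.d.\ structure of $(N_{\lt},N_{\rt})$ (homeomorphism invariance of support cardinality plus independence and equidistribution of the subtree processes) and insert the zero-one step, both of which you handle correctly.
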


\begin{proof} For any measure $\mu$ on $\partial \DD$, we shall write 
$\mathsf{atoms}(\mu)$ for the set of atoms of  $\mu$, and $\mathsf{supp}(\mu)$
for its support.

Let $\ut \in  \{\lt,\rt\}^*$ and consider hyperbolic BBM according to our construction
of \S \ref{sec:hypBBM} with time tree $\T_{\ut}$ and starting point $\BB_{\ut'} = o$. 
The two children of $\ut$ are $\ut\lt$ and $\ut\rt$. For each of the 
two independent subtrees $\T_{\ut\lt}$ and $\T_{\ut\rt}\,$, the vertex
$\ut$ has the role which the ancestor $\bal$ has in $\T$. Write 
$z_1 = \BB_{\ut}\,$, so that $g_{z_1} = \gb_{\ut}\,$, the random element
of $\Aff$ as described in \S \ref{sec:hypBBM}.
Denote by $\Mbf_{\ut\lt,t}^{z_1}$ and $\Mbf_{\ut\rt,t}^{z_1}$ the occupation measures
at time $t$ (that is, distance $t$ from $u$ in $\T_{\lt}$ and $\T_{\rt}$),
respectively. 
Then
$$
\Mbf_{\ut,t}^o = \Mbf_{\ut\lt,t-\ell_{\ut}}^{z_1} + \Mbf_{\ut\rt,t-\ell_{\ut}}^{z_1}\,.
$$
Dividing by $\nn_{\bep}(t)$ and letting $t \to \infty\,$,
$$
\mu_{\ut,\infty}^o 
= \cb_{\ut\lt} \,\mu_{\ut\lt,\infty}^{z_1}
+ \cb_{\ut\rt}\,\mu_{\ut\rt,\infty}^{z_1},
\quad \text{where}\quad \cb_{\ut\lt} = \frac{W_{\ut\lt}}{W_{\ut}} \, e^{-\lambda\ell_{\ut}}
\;\text{ and }\; \cb_{\ut\rt} = \frac{W_{\ut\rt}}{W_{\ut}} \, e^{-\lambda\ell_{\ut}}\,,
$$
a non-trivial convex combination of two probability measures. 
We can also rewrite this as
$$
\gb_{\ut}^{-1}\mu_{\ut,\infty}^o 
= \cb_{\ut\lt} \,\mu_{\ut\lt,\infty}^{o}
+ \cb_{\ut\rt}\,\mu_{\ut\rt,\infty}^{o}\,.
$$
From this identity and Lemma \ref{lem:indep} we infer that
$$
\mathsf{atoms}(\mu_{\ut,\infty}^o) 
= \gb_{\ut}\mathsf{atoms}(\mu_{\ut\lt,\infty}^o)  \stackrel{{\,}_{_{_{_+}}}}{\cup} 
\gb_{\ut}\mathsf{atoms}(\mu_{\ut\rt,\infty}^o)
$$
is almost surely a disjoint union. 

Recursively, we get the following convex combination for each $n$:
\begin{equation}\label{eq:meassum}
\mu_{\bep,\infty}^o = 
\sum_{\vt \in \{\lt,\rt\}^n} \cb^{(n)}_{\vt}\, \Gb_{\vt'}\, \mu_{\vt,\infty}^o
\end{equation}
with positive random constants $\cb^{(n)}_{\vt}$ and $\Gb_{v'}$ as defined by \eqref{eq:Gv}, and 
$$
\mathsf{atoms}(\mu_{\ut,\infty}^o) = 
\ {\bigcup}_{\vt \in \{\lt,\rt\}^n}^{\!\!\!\!\!\!\!+}
\Gb_{\vt'}\, \mathsf{atoms}(\mu_{\vt,\infty}^o) \quad \text{almost surely.}
$$
If $\mathsf{supp}(\mu_{\ut,\infty}^o)$ is finite then the support of the measure coincides with
the set of its atoms, and there must be $\vt$ such that $\mu_{\vt,\infty}^o$ has no atoms.
But along with $\mu_{\bep,\infty}^o\,$, by \eqref{eq:meassum} all $\mu_{\vt,\infty}^o$
have finite support consisting only of atoms, a contradiction.
\end{proof}

\begin{dfn}\label{def:limset}
The \emph{limit set} of hyperbolic BBM is the random set $\Ls$ of accumulation points 
of $(\BB_{\tau})_{\tau \in \T}$ on the unit circle $\partial \DD$.
\end{dfn}

{\sc Lalley and Sellke}~\cite{LS} have proved the following significant result.

\begin{theorem} \cite{LS} \label{thm:LS}
In the unit circle with the arclength measure, there is the following dichotomy.
\\[3pt]
\emph{(i)} For $\lambda \le 1/8$, with probability $1$, $\Ls$ is a 
Cantor set (totally disconnected and
perfect), and its Hausdorff dimension is $(1-\sqrt{1-8\lambda}\,)/2$.
Furthermore, $\Ls$ is contained in a proper sub-arc of $\DD$.
\\[3pt]
\emph{(ii)} For $\lambda > 1/8$, with probability $1$, $\Ls = \partial \DD$.
\end{theorem}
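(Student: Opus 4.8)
This is the principal theorem of \cite{LS}, so the honest ``proof'' is a reference to that paper; what follows is the strategy I would follow, assembled from the tools developed above. The whole analysis is driven by one first-moment computation. Working in $\HH$ with start at $o=\im$, recall from \eqref{eq:im} that the vertical coordinate $w_\tau = -\log\Im\BB_\tau^\HH$ is one-dimensional BBM with drift $1/2$, and that a particle sitting at height $\Im z = e^{-R}$ resolves the boundary line $\R$ only down to Euclidean scale $\asymp e^{-R}$. If $T_R$ denotes the first-passage time to level $R$ of a drift-$1/2$ Brownian motion, optional stopping applied to the exponential martingale gives $\E\bigl(e^{\lambda T_R}\bigr)=\exp\!\bigl(R(1-\sqrt{1-8\lambda})/2\bigr)$, finite precisely when $\lambda\le 1/8$. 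By the stopping-line version of the Many-to-one Lemma \ref{lem:many-one}, the expected number of particles whose ancestral line first reaches height $e^{-R}$ is exactly $\E\bigl(e^{\lambda T_R}\bigr)$, so this count grows like $e^{\alpha R}$ with $\alpha=(1-\sqrt{1-8\lambda})/2$ in the transient regime and diverges at every level in the recurrent one. The exponent that will turn out to be the Hausdorff dimension and the phase transition are thus one and the same phenomenon.

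For part (ii), $\lambda>1/8$, I would argue directly from recurrence. Fix a boundary arc $I$ and a compact $K\subset\DD$ deep enough that the shadow of $K$ (the boundary points whose geodesic from $o$ meets $K$) lies inside $I$. By Proposition \ref{pro:regimes}, $K$ is visited by infinitely many particles, and one can extract infinitely many visits carried by pairwise-incomparable sub-branches. Conditioning on such a visit at position $z\in K$, the excursion continuing from there converges into $I$ with probability $\nu_z(I)>0$ by the Poisson kernel \eqref{eq:poisson}, and these excursions are independent with success probabilities bounded below by $\inf_{z\in K}\nu_z(I)$. Hence almost surely some (indeed infinitely many) ray-limits fall in $\overline I$, so $\Ls\cap\overline I\neq\emptyset$. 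Intersecting over a countable basis of arcs and invoking rotational invariance then yields $\Ls=\partial\DD$ almost surely.

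For the upper bound in part (i), $\lambda\le1/8$, I would cover $\Ls$ by shadows. A particle first reaching height $e^{-R}$ at horizontal position $x_\tau$ has a shadow arc of diameter $\asymp e^{-R}$ — up to the heavy Cauchy tails of half-plane harmonic measure, absorbed by enlarging each arc to scale $e^{-R(1-\delta)}$ at the cost of an arbitrarily small $\delta$ in the exponent — and every ray-limit lies in one such shadow, so these arcs cover $\Ls$. The expected $s$-dimensional content is then $\asymp\E\bigl(e^{\lambda T_R}\bigr)\,e^{-sR}\asymp e^{(\alpha-s)R}\to0$ for $s>\alpha$, and Borel--Cantelli along a sequence $R_n\to\infty$ gives $\dim_H\Ls\le\alpha$ almost surely. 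The topological assertions follow once $\dim_H\Ls<1$: then $\Ls$ is a proper closed subset of $\partial\DD$, its complement is a non-empty open set containing an open sub-arc, so $\Ls$ lies in the complementary closed proper sub-arc; a closed set of dimension $<1$ contains no non-degenerate arc, hence has only singleton connected components and is totally disconnected; and perfectness comes from iterating the self-similar decomposition $\mu_{\bep,\infty}^o=\sum_{\vt}\cb^{(n)}_\vt\,\Gb_{\vt'}\mu_{\vt,\infty}^o$ of \eqref{eq:meassum}, each summand having infinite support (as in Theorem \ref{thm:atoms}) so that every point of $\Ls$ is accumulated by others.

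The main obstacle is the matching lower bound $\dim_H\Ls\ge\alpha$. Here I would produce a Frostman measure on $\Ls$ — the natural candidate is the limit empirical distribution $\mu_\infty$ of Theorem \ref{thm:weakconv}, which is carried by $\Ls$ and has mean $\nu_o$ — and bound its $s$-energy $\iint|\xi-\eta|^{-s}\,d\mu_\infty(\xi)\,d\mu_\infty(\eta)$ for $s<\alpha$. This calls for a two-point (many-to-two) estimate: two ancestral lines share their history up to their confluent $\pi\wedge\pi'$ and evolve independently afterwards, so the energy is governed by the law of the confluent level together with the probability that two independent rays launched from a common height reach level $R$ within angular distance $e^{-R}$. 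Making this second-moment control uniform — taming the fat tails of harmonic measure and the contribution of late confluents — is the delicate step, and it is precisely where the condition $\lambda\le1/8$, equivalently the finiteness of $\E\bigl(e^{\lambda T_R}\bigr)$, is used a second time to keep the energy finite.
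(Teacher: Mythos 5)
The paper itself contains no proof of this statement: Theorem \ref{thm:LS} is quoted verbatim from \cite{LS}, so there is nothing internal to compare against, and your opening remark that the honest proof is a citation is exactly right. Judged as a reconstruction of the Lalley--Sellke argument, your first-moment computation (the exponent $\alpha=(1-\sqrt{1-8\lambda})/2$ from $\E(e^{\lambda T_R})$ for drift-$\tfrac12$ first passage), the covering/Borel--Cantelli upper bound on the dimension, and the recurrent-case argument via Proposition \ref{pro:regimes} plus independent continuations are all consistent in spirit with what \cite{LS} does.

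However, your lower-bound strategy contains a step that provably fails, and the paper itself tells you so. You propose to take $\mu_{\infty}=\mu_{\infty}^o$ of Theorem \ref{thm:weakconv} as the Frostman measure and to show its $s$-energy is finite for all $s<\alpha$. But by the Note added in proof (Geldbach's result, quoted after Questions \ref{ques:tions}), $\dim_H \mathsf{supp}(\mu_{\infty}^o)=\min\{2\lambda,1\}=2\lambda$ for $\lambda\le 1/8$, and $2\lambda<(1-\sqrt{1-8\lambda})/2=\alpha$ strictly for all $\lambda\in(0,1/8]$ (squaring $1-4\lambda\ge\sqrt{1-8\lambda}$ reduces this to $16\lambda^2\ge 0$, with equality only at $\lambda=0$). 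Since a measure with finite $s$-energy assigns Hausdorff dimension at least $s$ to every set of full measure --- in particular to its support --- the measure $\mu_{\infty}^o$ necessarily has \emph{infinite} $s$-energy for every $s>2\lambda$, so no amount of many-to-two estimation can push the energy method with this measure beyond $\dim_H\Ls\ge 2\lambda$. The conceptual reason is that the empirical distribution weights all time-$t$ particles equally, so in the limit it charges only the boundary points reached by typical rays, whereas the dimension of $\Ls$ is carried by exponentially rare deep excursions toward the circle; the correct Frostman measure must be exponentially tilted toward the first-passage structure (e.g.\ the martingale limit of normalized counts of first crossings of horocyclic levels inside arcs), which is in essence what \cite{LS} constructs. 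A secondary gap of the same origin: your perfectness argument via \eqref{eq:meassum} and Theorem \ref{thm:atoms} can only show that $\mathsf{supp}(\mu_{\infty}^o)$ is perfect, and since $\mathsf{supp}(\mu_{\infty}^o)\subsetneq\Ls$ strictly (again by the Note, for $\lambda<1/2$), this does not establish perfectness of $\Ls$ itself; that requires an argument on the level of the trace, not of the limit empirical measure.
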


It is clear that
$\;
\mathsf{supp}(\mu_{\infty}^o) \subseteq \Ls\,.
\;$
At this point several interesting questions arise.

\begin{ques}\label{ques:tions} (a) Is $\mathsf{supp}(\mu_{\infty}^o) \subset \Ls$ 
properly$\,$?
\\[3pt]
(b) What is the Hausdorff dimension of $\mu_{\infty}^o$ (i.e. the smallest Hausdorff 
dimension of a set with full $\mu_{\infty}^o$-measure)$\,$? Is it strictly smaller than the 
Hausdorff dimension of $\Ls\,$?
\\[3pt]
(c) Is $\mu_{\infty}^o$ purely non-atomic$\,$? Is it absolutely continuous with respect to
Lebesgue (arc) measure in the supercritical regime $\lambda > 1/8\,$?
\end{ques}

\noindent
\textbf{Note added in proof.} After an early arXiv version of the present paper,
D. Geldbach \cite{Ge} has given striking answers to these questions.
\\[3pt]
(i) The Hausdorff dimension of $\mathsf{supp}(\mu_{\infty}^o)$ is $\min \{2\lambda, 1 \}$. 
This is not necessarily the same as the Hausdorff dimension of the measure itself.
In any case, the result may support a 
suggestion of V. A. Kaimanovich (oral communication) that the dimension of 
$\mu_{\infty}^o$ could be the quotient of the entropy rate and the escape rate of 
the empirical distributions $\mu_t\,$.  

In particular, for $\lambda < 1/2$, one has $\mathsf{supp}(\mu_{\infty}^o) \subset \Ls$ strictly.
\\[3pt]
(ii) The measure $\mu_{\infty}^o$ is always purely non-atomic.
\\[3pt]
(iii) For $\lambda > 1/2$, it is absolutely continuous with respect to
Lebesgue measure.


\begin{thebibliography}{22}  


\bibitem{AN} Athreya, K. B. and Ney, P. E.: \emph{Branching Processes.}  Grundlehren der 
mathematischen Wissenschaften {\bf 196}. Springer, New York, 1972.

\bibitem{BM} Benjamini, I. and M\"uller, S.: \emph{On the trace of branching random walks.} 
Groups Geom. Dyn. {\bf 6} (2012) 231--247.

\bibitem{BP} Benjamini, I. and Peres, Y.: \emph{Markov chains indexed by trees.} Ann. Probab. 
{\bf 22}  (1994) 219--243.

\bibitem{Ber} Beresticky, J.: \emph{Topics on Branching Brownian
motion.} Lecture Notes, Univ. Oxford  (2014), 
\url{https://www.stats.ox.ac.uk/~berestyc/Articles/EBP18_v2.pdf}.

\bibitem{Bi1} Biggins, J. D.: \emph{The central limit theorem for the supercritical 
branching random walk, 
and related results.} Stochastic Process. Appl. {\bf 34} (1990)
255--274.

\bibitem{Bi2} Biggins, J. D.: \emph{Uniform convergence of martingales in the branching random walk.} Ann. 
Probab. {\bf 20} (1992) 137--151.

\bibitem{Bo} Bovier, A.: \emph{From spin glasses to branching Brownian 
motion -- and back?} In \emph{Random Walks, Random Fields, and Disordered Systems,} pp. 1--64.
Lecture Notes in Math. {\bf 2144}, Springer, Cham, 2015.

\bibitem{Br} Bramson, M.: \emph{Convergence of solutions of the Kolmogorov 
equation to travelling waves.} Mem. Amer. Math. Soc. {\bf 44} (1983) no. 285.

\bibitem{CGM} Candellero, E., Gilch, L. and M\"uller, S.: {Branching random walks on 
free products of groups.} Proc. London Math. Soc. {\bf 104} (2012) 1085--1120.

\bibitem{CH} Candellero, E. and Hutchcroft, T.: \emph{On the boundary at infinity for branching 
random walk.} Electronic Commun. Probab. {\bf 28} (2023) 1--12.

\bibitem{CR} Candellero, E. and Roberts,  M. I.: \emph{The number of ends of critical 
branching random walks.} ALEA Lat. Am. J. Probab. Math. Stat. {\bf 12} (2015) 55--67.

\bibitem{DM}  Davies, E. B. and Mandouvalos, N.: \emph{Heat kernel bounds on hyperbolic 
space and Kleinian groups.}
Proc. London Math. Soc. (3) {\bf 57} (1988) 182--208.

\bibitem{DWY} Dussaule, M., Wang, L. and Yang, W.: \emph{Branching random walks 
on relatively hyperbolic groups.} Ann. Probab. {\bf 53} (2025) 
391--452.


\bibitem{Ge} Geldbach, D.: \emph{Hyperbolic branching Brownian motion: the 
empirical limit measure.} Preprint, Univ. Oxford (2025), 
arXiv:2509.06730. 

\bibitem{GiM} Gilch, L. A. and M\"uller, S.: \emph{Ends of branching random walks on planar 
hyperbolic Cayley graphs.} In \emph{Groups, Graphs and Random Walks,} London Math. Soc. Lecture Note 
Ser. {\bf 436}, pp. 205--214, Cambridge Univ. Press, Cambridge, 2017.

\bibitem{Ha} Harris, T.: \emph{The Theory of Branching Processes.} Grundlehren der 
mathematischen Wissenschaften {\bf 119}. Springer, Berlin 1963. 

\bibitem{HaRo} Harris, S. C. and Roberts, M. I.: \emph{The many-to-few lemma 
and multiple spines.}
Ann. Inst. Henri Poincar\'e Probab. Stat. {\bf 53} (2017) 
226--242.

\bibitem{HL} Hueter, I. and Lalley, S. P.: \emph{Anisotropic branching random walks on 
homogeneous trees.} Probab. Theory Rel. Fields {\bf 116} (2000) 57--88.

\bibitem{Hu} Hutchcroft, T.: \emph{Non-intersection of transient branching random walks.} 
Probab. Theory Rel. Fields {\bf 178}  (2020) 1--23.

\bibitem{KW} Kaimanovich, V. A. and Woess, W.: \emph{Limit distributions of branching Markov 
chains.} Ann. Inst. H. Poincar\'e Probab. Stat. {\bf 59} (2023) 1951--1983.

\bibitem{KA} Kaplan, N. and Asmussen, S.: \emph{Branching random walks II.}
Stochastic Process. Appl. {\bf 4} (1976) 15--31.

\bibitem{KLZ} Kim, Y. H., Lubetzky, E. and Zeitouni, O.:{The maximum of branching 
Brownian motion in $\R^d$.} Ann. Appl. Probab. {\bf 33} (2023) 1315--1368.

\bibitem{KPP} Kolmogorov, A. N., Petrovski, I. and Piskunov, N.: \emph{\'Etude de l’\'equation de
la diffusion avec croissance de la quantit\'e de mati\`ere et son application \`a un probl\`eme
biologique.} Moscow Univ. Math. Bull. {\bf 1} (1937) 1--25.

\bibitem{Li} Liggett, T. M.: \emph{Branching random walks and contact processes on homogeneous trees.}
Probab. Theory Related Fields {\bf 106} 
(1996) 495--519.

\bibitem{LS} Lalley, S. P. and Sellke, T.: \emph{Hyperbolic branching Brownian motion.}
Probab. Theory Rel. Fields. {\bf 108} (1997) 171--192. 

\bibitem{McK} McKean, H. P.: \emph{Application of Brownian motion to the equation of 
Kolmogorov-Petrovskii-Piskunov.}
Comm. Pure Appl. Math. {\bf 28} (1975) 
 323--331.

\bibitem{Ney} Ney, P. M.: \emph{The convergence of a random distribution 
function associated with a branching process.}
J. Math. Anal. Appl. {\bf 12} (1965), 316--327.

\bibitem{Ro} Roberts, M. I.: \emph{A simple path to asymptotics for the frontier of a 
branching Brownian motion.} Ann. Probab. {\bf 41}(2013) 
 3518--3541.

\bibitem{Shi} Shi, Z.: \emph{Branching Random Walks.}
Lecture Notes in Math. {\bf 2151},
Springer, Cham, 2015.

\bibitem{SWX} Sidoravicius, V., Wang, L. and Xiang, K.: \emph{Limit set of branching 
random walks on hyperbolic groups.}  Comm. Pure Appl. Math. {\bf 76} (2023) 2765--2803.

\bibitem{St} Stam, A. J.: \emph{On a conjecture by Harris.} Z. Wahrscheinlichkeitstheorie 
verw. Gebiete {\bf 5} (1966) 202--206.

\bibitem{U} Uchiyama, K.: \emph{Spatial growth of a branching process of particles living 
in $\R^d$.} Ann. Probab. {\bf 10} (1982) 
896--918.

\bibitem{Y} Yule, G.U.: \emph{A mathematical theory of evolution, based on the conclusions 
of Dr. J. C. Willis.} 
Philos. Trans. Roy. Soc. London Ser. B {\bf 213} (1924) 21--87.

\end{thebibliography}
\end{document}